\documentclass[11pt,a4paper]{amsart}
\usepackage{graphicx}
\usepackage{amssymb}
\usepackage[all]{xy}
\usepackage{amsmath}
\usepackage[french,english]{babel}
\usepackage{amssymb,amsmath,amsfonts,amsthm,amscd}
\usepackage{indentfirst,graphicx}
\usepackage[font={scriptsize},captionskip=5pt]{subfig}

\def\Projan{\mathop{\rm Projan}}
\newtheorem{theorem}{Theorem}[section]
\newtheorem{exa}[theorem]{Example}

\newtheorem{rem}[theorem]{Remark}
\newenvironment{remark}{\begin{rem} \em}{\end{rem}}
\newtheorem{cor}[theorem]{Corollary}
\newenvironment{corollary}{\begin{cor} \em}{\end{cor}}
\newtheorem{tont}[theorem]{Definition}
\newenvironment{definition}{\begin{tont} \em}{\end{tont}}
\newtheorem{lem}[theorem]{Lemma}

\newtheorem{Prop}[theorem]{Proposition}
 
\let\:=\colon

\begin{document}

\title[Collections of 1-forms and
Chern Numbers]{The Multiplicity Polar Theorem, Collections of
1-forms and Chern Numbers}
\author{TERENCE GAFFNEY and NIVALDO G.  GRULHA Jr}

\maketitle

\selectlanguage{english}

\begin{abstract}
In this work we show how the Multiplicity Polar Theorem can be used
to calculate Chern numbers for collection of 1-forms.
\end{abstract}

\section{Introduction}

Given a space with singularities, and a geometric invariant defined
for smooth spaces, it is interesting to see whether or not the
invariant is well defined for the singular space, and, if it is,
what are the contributions to the invariant from the singularities.

In a series of papers Ebeling and Gusein-Zade have discussed the
meaning of such invariants as, the index of a differential form
\cite{E-GZ3}, various notions of the  index of a vector field
\cite{EG}
   and the Chern numbers, and have described the contributions from the singularities in some cases. In \cite {E-GZ3},
they calculated the radial index of a 1-form on a complete
intersection singularity. In \cite{Gaff-AG}, it is shown that in the
case of a differential 1-form with an isolated singularity on
$X\subset\mathbb{C}^n$, $X$, a complex analytic space, that the
radial index can be computed using the multiplicity of a pair of
modules. The computation of \cite{Gaff-AG} amounts to computing the
intersection multiplicity of the graph of the one form $\omega$,
which is a subset of the (unprojectivised) conormal bundle of
$\mathbb{C}^n$ and the cotangent space of $X$. In contrast to \cite
{E-GZ3}, the calculation is valid for any equidimensional space.

In \cite {EG2} Ebeling and Gusein-Zade developed the notion of the
Chern number of a singular space using collections of differential 1
forms. Their numbers are well defined for any equidimensional
reduced complex analytic germ, but they only compute the number for
ICIS singularities. Their Chern number is again an intersection
number. As in the earlier work, the intersection takes place at the
level of conormal spaces; they call the points in $X$ which are the
projection of the points of intersection, {\it special points}.

In the case that we have just one 1-form, the Chern number
is the Euler obstruction of the differential form (\cite {EG} p17).  This is related to  the Euler obstruction of a set and  the Euler
obstruction of a function as defined by Brasselet, Massey,
Parameswaran and Seade in \cite{BMPS}. In \cite{EG05a} the
definition of the Euler obstruction of a function was adapted to the
case of $1$-form, the Euler obstruction of a function was studied by
several authors, in this direction we have for example the papers
\cite{grulha1},\cite{STV},\cite{BG},\cite{Gaff-AG}. In \cite{BGR} the authors determine relations between the local Euler obstruction of an analytic map $f$ defined in \cite{grulha2} and the Chern number of a
convenient collection of 1-forms associated to $f$.

In this work we use the multiplicity polar theorem to calculate
Chern numbers for any equidimensional reduced complex analytic germ.
This extends the earlier work of \cite{Gaff-AG}. For the Chern
number problem one must work with a set of collections of
differential one forms, and calculate the order of the point where
all of the collections are linearly dependent. Since we want to
calculate the number of points at which the fields are linearly
dependent after a generic perturbation, this is again a problem
involving modules. Because we have a collection of forms we have a
collection of modules, so the problem is like a problem in
intersection theory, except the spaces are defined by modules not by
ideals.

The computation of the Chern numbers is an example of a problem where the underlying vector bundle, which is the tangent bundle in the Chern case, is not defined at every point of $X$. The set $X$ must be modified; so we pass to the Nash modification of $X$ where the tangent bundle of $X$ is defined in order to understand the problem fully.  This process of modifying a space to fill in points where a bundle $\xi$ fails to be defined works in general, and our process of calculating intersection numbers  also extends. We outline this in the last section. This suggests the easier problem of calculating Chern numbers when the tangent bundle is well defined at all points, and more generally, intersection numbers of modules. We take this up in section two. The main themes of section four appear in this material.

Also in section two, we recall some basic ideas about the theory of
integral closure of modules and the statement of the multiplicity
polar theorem.

In section three we recall how Ebeling and Gusein-Zade  develop the
notion of Chern number in their paper.

In section 4 we introduce the  notion of a special point for a collection. Roughly speaking  a point $p \in X$ is called a special point of
the collection $\{\omega_{j}^{(i)}\}$ of $1$-forms on the variety
$X$ if there exists a point in the fiber of the Nash modification over $p$ such that  the restriction of the
$1$-forms $\omega_{1}^{(i)}, \cdots, \omega_{d-k_{i}+1}^{(i)}$ to
the point are linearly dependent
for each $i=1, \cdots, s$. We next see how special points can be viewed as intersections, hence have an associated intersection number, if isolated.
 We then begin to solve the ``module intersection
theoretic problem"  for the computation of the Chern numbers described above.  We prove a "Gysin" type
theorem, (Proposition 4.7) that is, under suitable genericity
hypotheses, we can do our calculations on a single space which
represents the intersection of all but the last spaces defined by
our collection and use the last module associated with the
collection restricted to this space for our computations. We also
prove a genericity result (Proposition 4.10) which shows that by
deforming just the last collection of differential forms, we can
ensure the set of collections is generic in an appropriate sense.

 We begin section 5 by recalling a result of  Ebeling and Gusein-Zade (Proposition 5.1) relating Chern numbers and special points. In Proposition 5.4 we describe in integral closure terms what it means for $x\in X$ not to be a special point for a collection of forms. After gearing up to apply the multiplicity polar theorem, in Theorem 5.13
we show that deforming our last collection allows us to split the contribution of the Chern number from an isolated special point into the multiplicity of a pair of modules and the intersection number of the new collection. Using this as the inductive step, we can write the contribution to the Chern number as a sum of multiplicities of pairs (Corollary 5.14).

We next show that if $X$ is an ICIS, then our formula agrees with that of Ebeling and Gusein-Zade (Corollary 5.15 and the discussion afterwards.)

We close by indicating how our results can be generalized to the case of a  bundle $E^k$ defined on a Zariski open, everywhere dense  subset $U$ of an analytic space $X$, $E^k$ a sub-bundle with $k$ dimensional fiber of a bundle, $F^l$, where $F^l$ is defined everywhere.

The authors thank Steven Kleiman for helpful conversations on the connection between their work and the intersection multiplicity of Serre.

\vspace{2cm}

\section{Integral closure of modules}

Let  $(X, x)$ be a germ of a complex analytic space and $X$ a
small representative of the germ and let $\mathcal{O}_{X}$ denote the
structure sheaf on a complex analytic space $X$. The study of what
it means for a collection of $1$-forms to have a special point on a
singular space depends on the behavior of limiting tangent
hyperplanes. The key tool for studying these limits is the theory of
integral closure of modules, which we now introduce.

\begin{definition} Suppose $(X, x)$ is the germ of a complex analytic space,
$M$ a submodule of $\mathcal{O}_{X,x}^{p}$. Then $h \in
\mathcal{O}_{X,x}^{p}$ is in the integral closure of $M$, denoted
$\overline{M}$, if for all analytic $\phi : (\mathbb{C}, 0) \to (X,
x)$, $h \circ \phi \in (\phi^{*}M)\mathcal{O}_{1}$. If $M$ is a
submodule of $N$ and $\overline{M} = \overline{N}$ we say that $M$
is a reduction of $N$.
\end{definition}

To check the definition it suffices to check along a finite number of curves whose generic point is in the Zariski open subset of $X$ along which $M$ has maximal rank. (Cf. \cite {G-2})

If a module $M$ has finite colength in $\mathcal{O}_{X,x}^{p}$, it
is possible to attach a number to the module, its Buchsbaum-Rim
multiplicity. We can also define the multiplicity of a pair of
modules $M \subset N$, $M$ of finite colength in $N$, as well, even
if $N$ does not have finite colength in $\mathcal{O}_{X}^{p}$.

In studying the geometry of singular spaces, it is natural to study
pairs of modules. In dealing with non-isolated singularities, the
modules that describe the geometry have non-finite colength, so
their multiplicity is not defined. Instead, it is possible to define
a decreasing sequence of modules, each with finite colength inside
its predecessor, when restricted to a suitable complementary plane.
Each pair controls the geometry in a particular codimension.

We recall how to construct the multiplicity using the approach of
Kleiman and Thorup \cite{KT}. Given a submodule $M$ of a free
$\mathcal{O}_{X^{d}}$ module $F$ of rank $p$, we can associate a
subalgebra $\mathcal{R}(M)$ of the symmetric $\mathcal{O}_{X^{d}}$
algebra on $p$ generators. This is known as the Rees algebra of $M$.
If $(m_{1}, \cdots ,m_{p})$ is an element of $M$ then $\sum
m_{i}T_{i}$ is the corresponding element of $\mathcal{R}(M)$. Then
$\Projan(\mathcal{R}(M))$, the projective analytic spectrum of
$\mathcal{R}(M)$ is the closure of the projectivised row spaces of
$M$ at points where the rank of a matrix of generators of $M$ is
maximal. Denote the projection to $X^{d}$ by $c$. If $M$ is a
submodule of $N$ or $h$ is a section of $N$, then $h$ and $M$
generate ideals on $\Projan \mathcal{R}(N)$; denote them by $\rho(h)$
and $\rho(\mathcal{M})$. If we can express $h$ in terms of a set of
generators $\{n_{i}\}$ of $N$ as $\sum g_{i}n_{i},$ then in the
chart in which $T_{1}\neq 0,$ we can express a generator of
$\rho(h)$ by $\sum g_{i}T_{i}/T_{1}.$ Having defined the ideal sheaf
$\rho(\mathcal{M}),$ we blow up by it.

On the blowup $B_{\rho(\mathcal{M})}(\Projan \mathcal{R}(N))$ we have
two tautological bundles. One is the pullback of the bundle on
$\Projan \mathcal{R}(N)$. The other comes from $\Projan
\mathcal{R}(M)$. Denote the corresponding Chern classes by $c_{M}$
and $c_{N}$, and denote the exceptional divisor by $D_{M,N}$.
Suppose the generic rank of $N$ (and hence of $M$) is $g$.

Then the multiplicity of a pair of modules $M, N$ is:

$$
e(M,N) = \sum_{j=0}^{d+g-2}\int D_{M,N}\cdot c_{M}^{d+g-2-j}\cdot
c_{N}^{j}.
$$

Kleiman and Thorup show that this multiplicity is well defined at $x
\in X$ as long as $\overline{M} = \overline{N}$ on a deleted
neighborhood of $x$. This condition implies that $D_{M,N}$ lies in
the fiber over $x$, hence is compact.

We will be interested in computing the change in the multiplicity of
the pair $(M,N),$ denoted $\Delta(e(M,N))$. We will assume that the
integral closures of $M$ and $N$ agree off a set $C$ of dimension
$k$ which is finite over $Y$, and assume we are working on a
sufficiently small neighborhood of the origin, so that every component
of $C$ contains the origin in its closure. Then $e(M,N, y)$ is the
sum of the multiplicities of the pair at all points in the fiber of
$C$ over $y$, and $\Delta(e(M,N))$ is the change in this number from
$0$ to a generic value of $y.$ If we have a set $S$ which is finite
over $Y$, then we can project $S$ to $Y$, and the degree of the
branched cover at $0$ is $mult_{y} S.$ (Of course, this is just the
number of points in the fiber of $S$ over our generic $y.$)

We also need the notion of the polar varieties of $M$. The {\it polar variety of codimension $k$} of $M$ in $X$, denoted
$\Gamma_k(M)$, is constructed by intersecting $\Projan{\mathcal R}(M)$
 with $X\times H_{g+k-1}$ where
$H_{g+k-1}$ is a general plane of codimension $g+k-1$, then projecting to
$X$.

Setup: We suppose we have families  of modules $M\subset  N$, $M$ and $N$
 submodules of a free module $F$ of rank $p$
 on an equidimensional family of spaces with equidimensional
 fibers ${\mathcal X}^{d+k}$, ${\mathcal X}$ a family over a smooth base
$Y^k$. We assume that the generic rank of $M$, $N$ is $e\le p$.  Let
$P(M)$ denote $\Projan {\mathcal R}(M)$, $\pi_M$
 the projection to ${\mathcal X}$.

Let $C(M)$ denote the locus of points where $M$ is not free, ie. the
points where the rank of $M$ is less
than $g$, $C(\Projan {\mathcal R}(M))$
its inverse image under $\pi_M$.

We can now state the Multiplicity Polar Theorem.

\begin{theorem}(Multiplicity Polar Theorem) Suppose in the above
setup we have that $\overline{M} = \overline{N}$ off a set $C$ of
dimension $k$ which is finite over $Y$. Suppose further that
$C(Projan\mathcal{R}(M))(0) = C(Projan\mathcal{R}(M(0)))$ except
possibly at the points which project to $0 \in \mathcal{X}(0).$
Then, for y a generic point of $Y$, $$\Delta(e(M,N)) =
mult_{y}\Gamma_{d}(M) - mult_{y}\Gamma_{d}(N).$$
\end{theorem}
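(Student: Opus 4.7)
The plan is to interpret $\Delta(e(M,N))$ as a jump in intersection numbers across the family and to identify this jump with the polar-multiplicity difference via the Kleiman-Thorup formula combined with conservation of number on the blowup $B_{\rho(\mathcal{M})}(\Projan \mathcal{R}(N))$ taken over the parameter space $Y$. Throughout I would track the two tautological Chern classes $c_M, c_N$ and the exceptional divisor $D_{M,N}$ produced by the Kleiman-Thorup construction, and interpret each piece of the formula fiberwise over $Y$.

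The first key step is to rewrite the Kleiman-Thorup sum using the telescoping identity
\[
c_M^{d+g-1} - c_N^{d+g-1} = (c_M - c_N)\sum_{j=0}^{d+g-2} c_M^{d+g-2-j}\, c_N^{j}
\]
together with the fact that on the blowup the class $c_M - c_N$ is represented by $D_{M,N}$. This recasts the Kleiman-Thorup integral defining $e(M,N)$ as the difference $\int c_M^{d+g-1} - \int c_N^{d+g-1}$ pushed onto the blowup. By the definition of the polar variety recalled above, $\int c_M^{d+g-1}$ over a generic fiber of the family equals $\mathrm{mult}_y \Gamma_d(M)$, and similarly for $N$, so at the level of generic fibers this difference produces the stated right hand side of the theorem.

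The second and more delicate step is to link the central-fiber value of this integral to $\Delta(e(M,N))$ by conservation of number. The hypothesis $\overline M = \overline N$ off a set $C$ of dimension $k$ finite over $Y$ ensures that $D_{M,N}$ is supported over $C$, so the Kleiman-Thorup integrals are finite and fiberwise constant in $y$ on a Zariski open subset of $Y$. The hypothesis $C(\Projan\mathcal{R}(M))(0) = C(\Projan\mathcal{R}(M(0)))$ away from the fiber over $0\in\mathcal{X}(0)$ is precisely what prevents extraneous central-fiber components of $\Projan\mathcal{R}(M)$ from absorbing intersection mass that should instead appear in the polar multiplicities. The main obstacle is making this specialization rigorous: one must verify that no correction terms arise from boundary effects or from the non-compactness of $\mathcal{X}$, and that the generic polar cycles specialize cleanly to a distinguished fiberwise component plus a remainder supported over $0$ of total degree exactly $\Delta(e(M,N))$. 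This is where the genericity hypothesis on $C(\Projan \mathcal{R}(M))$ does its essential work, ruling out pathological limiting behavior of $\Gamma_d(M)$ and $\Gamma_d(N)$ as $y\to 0$ and forcing the conservation-of-number accounting to close up to the claimed identity.
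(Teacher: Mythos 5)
First, a point of reference: the paper does not prove this statement. Theorem 2.2 is recalled from Gaffney's earlier work (\cite{Gaff}, \cite{Gaff-AG}) and is stated without proof, so there is no in-paper argument to compare yours against; I can only judge your proposal on its own terms.

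Your proposal correctly identifies the main ingredients --- the telescoping identity, the fact that $c_M-c_N$ is carried by the exceptional divisor $D_{M,N}$, the role of the polar variety $\Gamma_d(M)$ as the geometric representative of $c_M^{d+g-1}$, and the purpose of the hypothesis on $C(\Projan\mathcal{R}(M))$ --- but it has two genuine gaps. The first is structural: your ``second and more delicate step'' is announced rather than executed. You write that ``one must verify that no correction terms arise'' and that the genericity hypothesis ``does its essential work'' there, but that verification \emph{is} the theorem; everything before it is bookkeeping. The needed content is a specialization statement for the relative polar cycle, namely that $\Gamma_d(M)(0)$ decomposes as $\Gamma_d(M(0))$ plus a cycle supported over the origin, with the analogous statement for $N$, and that the degrees of the pieces over the origin account exactly for the drop in $e(M,N)$; the hypothesis $C(\Projan\mathcal{R}(M))(0)=C(\Projan\mathcal{R}(M(0)))$ away from the fiber over $0$ is what forbids extra components of $\Gamma_d(M)(0)$ not lying over the origin, and none of this is argued in your sketch.

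The second gap is that the rewriting of $e(M,N)$ as $\int c_M^{d+g-1}-\int c_N^{d+g-1}$ is not literally legitimate. The spaces $\Projan\mathcal{R}(M)$ and $\Projan\mathcal{R}(N)$ are not compact (they live over a representative of a germ), and the classes $c_M^{d+g-1}$ and $c_N^{d+g-1}$ cap to positive-dimensional cycles that are not proper over a point; each ``integral'' is separately undefined. Only the difference makes sense, because after the telescoping step it is supported on $D_{M,N}$, which is compact precisely by the hypothesis that $\overline{M}=\overline{N}$ off a set finite over $Y$. To convert each term into something finite you must first intersect with $d+g-1$ general hyperplanes --- that is, pass to the polar varieties and use their finiteness over $Y$ --- and the order in which you localize, slice, and specialize is exactly where the correction terms you wave away could appear. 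A complete proof (as in \cite{Gaff}) has to run the conservation-of-number argument on the compactly supported cycle $\sum_j D_{M,N}\cdot c_M^{d+g-2-j}\cdot c_N^{j}$ together with the two polar cycles simultaneously, rather than on the two divergent integrals separately.
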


Now we show how this machinery can be applied to a module intersection problem. Suppose we are given modules $M_1\subset F_1$ and $M_2\subset F_2$, $F_i$ free $ {\mathcal{O}}_{X^{d},x}$ modules  of rank $p_i$, $M_i$ generated by $n_i$ generators. Suppose the codimension of $C(M_i)$ is $n_i-p_i+1$, and the sum of the codimensions is $d$, $C(M_i)$ equidimensional. If we deform the generators of $M_i$, how many points do we expect to see where both modules have less than maximal rank?

We can take this number as the intersection number of the two modules.

As further justification, we relate this number to an intersection number at $x$. Let $\mathcal{M}(p,q)$, $p\leq q$, be the space of $p\times q$
matrices with complex entries and let $D_{p,q}$ be the subspace of
$\mathcal{M}(p,q)$ consisting of matrices of rank less than $p$. The
subset $D_{p,q}$ is an irreducible subvariety of $\mathcal{M}(p,q)$
of codimension $q-p+1$.

Fix a matrix of generators $[M_i]$ of $M_i$.

Then each matrix $[M_i]$ defines a section $\Gamma_{M_i}$ of  $ \mathbb{C}^{n}\times\mathcal{M}(p_i,n_i)$ in the obvious way; the pair
defines a section $\Gamma_{M_1,M_2}$ of $ \mathbb{C}^{n}\times\mathcal{M}(p_1,n_1)\times \mathcal{M}(p_2,n_2)$.
 We will assume that
$X\times D_{p_1,n_1}\times D_{p_2,n_2} \cap Im(\Gamma_{M_1,M_2})$ is
isolated and lies over $x$. The intersection number of  $X\times D_{p_1,n_1}\times D_{p_2,n_2}$ and $Im(\Gamma_{M_1,M_2})$  at $(x, \Gamma_{M_1,M_2}(x))$ is the number we want to calculate.

\begin{theorem} Suppose each of the sections $\Gamma_{M_i}$ is transverse to $ X^d\times D_{p_i,q_i}$ on a Z-open set $U_i$ such that $U_i\cap C(M_i)$ is Z-open and dense in $C(M_i)$. Then the intersection number of  $X\times D_{p_1,n_1}\times D_{p_2,n_2}$ and $Im(\Gamma_{M_1,M_2})$  at $(x, \Gamma_{M_1,M_2}(x))$ is $e(M_1, {\mathcal{O}}_{C(M_2),x})=e(M_2,  {\mathcal{O}}_{C(M_1),x})$.
\end{theorem}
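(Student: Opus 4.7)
The plan is to compute the intersection number by a generic deformation argument that reduces it to a Buchsbaum--Rim multiplicity on $C(M_2)$; the symmetric expression $e(M_2,\mathcal{O}_{C(M_1),x})$ will then follow by exchanging the roles of the two modules. First I would check the dimension bookkeeping. The ambient space $X\times\mathcal{M}(p_1,n_1)\times\mathcal{M}(p_2,n_2)$ has dimension $d+p_1n_1+p_2n_2$; the subvariety $X\times D_{p_1,n_1}\times D_{p_2,n_2}$ has codimension $(n_1-p_1+1)+(n_2-p_2+1)=d$ by the hypothesis on $\mathrm{codim}\,C(M_i)$, while the image of $\Gamma_{M_1,M_2}$ is a section of dimension $d$, so the intersection is $0$-dimensional and its set-theoretic support projects to $C(M_1)\cap C(M_2)=\{x\}$. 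The finiteness of this projection also forces $M_1|_{C(M_2)}$ to have finite colength in $F_1\otimes\mathcal{O}_{C(M_2),x}$, so $e(M_1,\mathcal{O}_{C(M_2),x})$ is defined.

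Next I would deform only $M_1$, replacing its matrix of generators $[M_1]$ by a generic small perturbation $[M_1]+t\eta$ while leaving $\Gamma_{M_2}$ untouched. The isolated local intersection number is invariant under such a continuous deformation. After deformation, any intersection point $y$ near $x$ must satisfy $y\in C(M_2)$ (since $\Gamma_{M_2}$ is unchanged) and $y\in C(M_1+t\eta)$. Using that $U_2\cap C(M_2)$ is Zariski-open and dense in the equidimensional variety $C(M_2)$, a generic choice of $\eta$ pushes all such $y$ into $U_2\cap C(M_2)$, where $\Gamma_{M_2}$ meets $X\times D_{p_2,n_2}$ transversely by hypothesis; genericity further ensures that $\Gamma_{M_1+t\eta}$ is transverse to $X\times D_{p_1,n_1}$ at each such $y$. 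Hence the local intersection number equals the number, each counted once, of rank-drop points of the restricted module $(M_1+t\eta)|_{C(M_2)}$ close to $x$.

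Finally I would identify this count with $e(M_1,\mathcal{O}_{C(M_2),x})$ using the standard characterisation of the Buchsbaum--Rim multiplicity: for a module of finite colength in a free module on a reduced equidimensional variety, the multiplicity equals the number of rank-drop points of a generic perturbation of its matrix of generators. Running the symmetric argument (deforming $M_2$ rather than $M_1$) produces $e(M_2,\mathcal{O}_{C(M_1),x})$ for the same intersection number, giving the claimed double equality. The main obstacle is the middle step: one must justify that a generic $\eta$ simultaneously lands all nearby intersection points in the good locus $U_2\cap C(M_2)$ and makes the deformed $M_1$ transverse to the rank-drop stratification there. This reduces to an openness/density argument exploiting that the complement of $U_2$ in $C(M_2)$ has strictly smaller dimension, together with the standard genericity that underlies the Buchsbaum--Rim perturbation characterisation.
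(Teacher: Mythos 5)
Your proposal follows essentially the same route as the paper: perturb the generators of $M_1$ generically while leaving $M_2$ fixed, use the transversality hypotheses and a dimension count to force the split-off intersection points into the good locus of $C(M_2)$ where each counts with multiplicity one, and identify their number with $e(M_1,\mathcal{O}_{C(M_2),x})$ via the perturbation characterisation of the Buchsbaum--Rim multiplicity (which the paper cites as Theorem 1.2 of \cite{G-7}). The step you flag as the main obstacle is precisely the point the paper disposes of by appealing to that theorem and its proof, so your argument is correct and complete modulo that standard reference.
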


\begin{proof} $X$ may be singular, so we assume $X$ is stratified with the canonical Whitney stratification. Then the transversality of $\Gamma_{M_i}$ means transversality to each $S_j\times D_{p_{i},q_{i}}$, $S_j$ a stratum. This ensures the generic point of each component of $C(M_i)$ is a smooth point of $X$. It also ensures that the codimension of $ {\mathcal{O}}_{C(M_i),x}$ is $n_i-p_i+1$. Since $X\times D_{p_1,n_1}\times D_{p_2,n_2} \cap Im(\Gamma_{M_1,M_2})$ is
isolated and lies over $x$, the sum of the codimensions of the $C(M_i)$ is $d$.

 Let us show that the intersection number of  $X\times D_{p_1,n_1}\times D_{p_2,n_2}$ and $Im(\Gamma_{M_1,M_2})$  at $(x, \Gamma_{M_1,M_2}(x))$ is $e(M_1,  {\mathcal{O}}_{C(M_2),x})$. The proof of the other half of the inequality is parallel.

 Note that the number of generators of $M_1$ as a $ {\mathcal{O}}_{C(M_2),x}$  module is
 $$n_1=(n_1-p_1+1)+p_1-1$$
 $$=d-(n_2-p_2+1)+p_1-1$$
 $$={\rm dim}\hskip 2pt C(M_2) + p_1 -1.$$

Then  by Theorem 1.2 of \cite{G-7}, we can find a perturbation of $[M_1]$ by a matrix of generic constants such that the section induced by the new matrix, $[\widetilde M_1]$, of $C(M_2)\times \mathcal{M}(p_1,n_1)$
  is transverse to  $C(M_2)\times D_{p,q}$, and the finite number of points at which $[\widetilde M_1]$ has less than maximal rank  occur at smooth points of $C(M_2)$ and there are $e(M_1, \mathcal{O}_{C(M_2),x})$ of them. In particular, $x$ is no longer a point where both sections have less than maximal rank. (It is not hard to see from the proof of Theorem 1.2 that in fact these lie in the Z-open dense subset of $C(M_2)$ on which the section $\Gamma_{M_2}$ is transverse
 to $X\times D_{p_2,n_2}$.) The transversality conditions on  $\Gamma_{M_2}$ and  $\Gamma_{\widetilde M_1}$ imply that 
 the section $\Gamma_{\widetilde M_1,M_2}$ is transverse to $X\times D_{p_1,n_1}\times D_{p_2,n_2}$ at all points of intersection. The total number of such points counted with multiplicity is the intersection number of  $X\times D_{p_1,n_1}\times D_{p_2,n_2}$ and $Im(\Gamma_{M_1,M_2})$  at $(x, \Gamma_{M_1,M_2}(x))$; the transversality statement implies each point occurs with multiplicity 1.

\end{proof}

\begin{corollary} Suppose $\mathcal{O}_{X,x}$ is Cohen-Macaulay, then the intersection number of  $X\times D_{p_1,n_1}\times D_{p_2,n_2}$ and $Im(\Gamma_{M_1,M_2})$  at $(x, \Gamma_{M_1,M_2}(x))$ is the colength of the ideal generated by the  maximal minors of $[M_i]$, $i=1,2$.
\end{corollary}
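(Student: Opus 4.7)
The plan is to chain Theorem 2.3 with two standard pieces of Cohen--Macaulay machinery, namely the Hochster--Eagon theorem and the Buchsbaum--Rim formula for multiplicity of a module in the Cohen--Macaulay setting.

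By Theorem 2.3 the intersection number equals $e(M_1,\mathcal{O}_{C(M_2),x})$, so, writing $J_{M_i}$ for the ideal of $\mathcal{O}_{X,x}$ generated by the $p_i\times p_i$ minors of $[M_i]$ (so that $\mathcal{O}_{C(M_2),x}=\mathcal{O}_{X,x}/J_{M_2}$), it suffices to show
\[
e(M_1,\mathcal{O}_{C(M_2),x}) \;=\; \ell\bigl(\mathcal{O}_{X,x}/(J_{M_1}+J_{M_2})\bigr).
\]
The first step would be to transfer the Cohen--Macaulay property to the determinantal quotient. Since $\mathcal{O}_{X,x}$ is Cohen--Macaulay by hypothesis and, by the setup preceding Theorem 2.3, $J_{M_2}$ is the ideal of maximal minors of $[M_2]$ having the expected codimension $n_2-p_2+1$, the Hochster--Eagon theorem forces $\mathcal{O}_{C(M_2),x}$ to be Cohen--Macaulay.

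Next, view $M_1$ as a submodule of the free module $\mathcal{O}_{C(M_2),x}^{\,p_1}$. Its rank-drop locus on $C(M_2)$ is $C(M_1)\cap C(M_2)=\{x\}$, which is $0$-dimensional by the codimension hypothesis $\operatorname{codim}C(M_1)+\operatorname{codim}C(M_2)=d$. Hence $M_1\cdot\mathcal{O}_{C(M_2),x}$ has finite colength in $\mathcal{O}_{C(M_2),x}^{\,p_1}$ and its ideal of $p_1\times p_1$ minors---which is exactly the image $J_{M_1}\cdot\mathcal{O}_{C(M_2),x}$---attains the expected codimension equal to $\dim\mathcal{O}_{C(M_2),x}$. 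Under these two conditions the classical Buchsbaum--Rim theorem gives
\[
e(M_1,\mathcal{O}_{C(M_2),x}) \;=\; \ell\bigl(\mathcal{O}_{C(M_2),x}/J_{M_1}\cdot\mathcal{O}_{C(M_2),x}\bigr),
\]
and the right-hand side is $\ell(\mathcal{O}_{X,x}/(J_{M_1}+J_{M_2}))$ by the third isomorphism theorem.

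The key obstacle is invoking the Buchsbaum--Rim colength formula in the last step; this requires that both the ambient ring be Cohen--Macaulay and that the Fitting ideal have the expected (maximal) codimension. Checking the second condition reduces to a simple codimension count from the hypotheses of Theorem 2.3, while the first condition is precisely the output of the Hochster--Eagon step. Once these are in place, the identification of the relevant quotient with $\mathcal{O}_{X,x}/(J_{M_1}+J_{M_2})$ is formal. An alternative route, bypassing Theorem 2.3, would be to observe that the ambient space $X\times\mathcal{M}(p_1,n_1)\times\mathcal{M}(p_2,n_2)$ is Cohen--Macaulay, that the two subschemes $X\times D_{p_1,n_1}\times D_{p_2,n_2}$ and $\operatorname{Im}(\Gamma_{M_1,M_2})$ are Cohen--Macaulay and meet properly at $(x,\Gamma_{M_1,M_2}(x))$, and hence that the Serre Tor-sum collapses to a plain colength; this gives the same answer and could be used as a cross-check.
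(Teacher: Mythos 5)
Your argument is correct and follows essentially the same route as the paper: deduce that $\mathcal{O}_{C(M_2),x}$ is Cohen--Macaulay from the Cohen--Macaulayness of $\mathcal{O}_{X,x}$ together with the expected codimension of the determinantal ideal, then apply the Buchsbaum--Rim colength formula to identify $e(M_1,\mathcal{O}_{C(M_2),x})$ with the colength of the ideal of maximal minors of $[M_1]$ in $\mathcal{O}_{C(M_2),x}$. Your version spells out the codimension count and the final identification with $\ell(\mathcal{O}_{X,x}/(J_{M_1}+J_{M_2}))$ in more detail than the paper does, but the substance is the same.
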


\begin{proof} Since $\mathcal{O}_{X,x}$ is Cohen-Macaulay and the structure on  $\mathcal{O}_{C(M_2),x}$ given by the minors of $[M_2]$ is generically reduced, it is reduced and $ \mathcal{O}_{C(M_2),x}$ is Cohen-Macaulay. Then $e(M_1, \mathcal{O}_{C(M_2),x})$ is the colength of the ideal of minors of $[M_1]$ in $ \mathcal{O}_{C(M_2),x}$ which gives the result.\end{proof}

Looking at the proof of the above theorem, in applying the technique of proof to geometric problems, we see that we need a description of the desired quantity as an intersection number, and a theorem about the transversality of a deformation of $[M_1]$ by a matrix of generic constants.

If $\xi_1$ and $\xi_2$ are vector bundles, we may wish to calculate geometric invariants related to sections of the bundles.  If the desired invariant is supported at a point, then locally the set of sections of our vector bundles are free modules, and we can look at the submodules generated by the given sets of sections. Then the last theorem can be used to calculate the contribution to the invariant at a point where the sections fail to be generic.

In the next couple of sections we will look at a more difficult case, one in which the vector bundle may only be defined on a Z-open subset of $X$. This will involve modifying $X$ to produce a new space on which the bundle is defined, then taking into account the fiber of the modification over $x$.

Before developing these ideas, we mention the connection between the ideas of this section and the intersection multiplicity defined by Serre (\cite{S}). Given modules $M_1\subset F_1$ and $M_2\subset F_2$, $F_i$ free $ {\mathcal{O}}_{X^{d},x}$ modules  of rank $p_i$ as above, Serre's intersection number is the alternating sum of the lengths of the $Tor^i(F^{p_1}/M_1, F^{p_2}/M_2)$. Under the hypotheses of Theorem 2.3, Serre's intersection number is the same as $e(M_1, {\mathcal{O}}_{C(M_2),x})=e(M_2,  {\mathcal{O}}_{C(M_1),x})$. This holds because under small deformations of the $M_i$ the intersection number doesn't change; but then, by a small deformation, we can reduce to the ideal case (ie. the modules have rank one less than maximum at common points where they have less than maximal rank). Then the intersection number counts the same points as $e(M_1, {\mathcal{O}}_{C(M_2),x})$.  For the case where  $\mathcal{O}_{X,x}$ is Cohen-Macaulay more can be said. Here the complex used to compute the $Tor^i$ is exact, so Serre's intersection number is just the length of $F^{p_1}/M_1\otimes F^{p_2}/M_2$. To see this,  consider the complex for ${O_n}^{p_1}/M_1$. At points where $M_1$ has maximal rank, this complex is exact. Further all the maps have maximal rank. Now tensor with ${O_n}^{p_2}/M_2$, and consider the resulting complex. At points where $M_2$ has maximal rank we are tensoring with 0, so the complex is exact. At points where $M_2$ has less than maximal rank distinct from the origin, then the complex remains exact, since all the maps have maximal rank. So the origin is the only point where the complex is not exact; but by the acyclicity lemma, (cf \cite {E} p498)  the complex must be exact there as  well.

We obtain the following result, which extends some theorems of Buchsbaum and Rim  (\cite {B-R} 2.4 p.207, 4.3 and
4.5 p.223), as a corollary:

\begin{corollary} Suppose $\mathcal{O}_{X,x}$ is Cohen-Macaulay, then the length of $F^{p_1}/M_1\otimes F^{p_2}/M_2$ is the colength of the ideal generated by the  maximal minors of $[M_i]$, $i=1,2$.
\end{corollary}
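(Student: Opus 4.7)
The plan is to chain together three identifications that appear in the discussion between Corollary 2.4 and this corollary. First, under the Cohen--Macaulay hypothesis on $\mathcal{O}_{X,x}$, I want Serre's intersection number of $F^{p_1}/M_1$ and $F^{p_2}/M_2$ to collapse to the single length $\ell(F^{p_1}/M_1\otimes F^{p_2}/M_2)$. To prove this I would use the Buchsbaum--Rim resolution of $F^{p_1}/M_1$: it is exact at points where $M_1$ has maximal rank, and after tensoring with $F^{p_2}/M_2$ it remains exact both at points where $M_2$ has maximal rank (the tensor factor vanishes) and at points where $M_2$ drops rank but which are distinct from $x$ (all differentials still have maximal rank). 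The acyclicity lemma then forces exactness at $x$ too, so all higher $\mathrm{Tor}^i$ vanish and Serre's number reduces to the plain length.

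Second, I would identify Serre's intersection number with the module multiplicity $e(M_1,\mathcal{O}_{C(M_2),x})$, exactly as sketched just before the corollary. By a generic small perturbation of the matrices $[M_1]$ and $[M_2]$ the two numerical invariants stay constant, but the common drop-rank locus can be brought into the ``ideal case'' in which each module drops rank by exactly one at isolated common points; in that situation Serre's number and $e(M_1,\mathcal{O}_{C(M_2),x})$ both count those isolated points with the same local multiplicity. Finally, Corollary 2.4 rewrites $e(M_1,\mathcal{O}_{C(M_2),x})$ as the colength of the ideal generated by the maximal minors of $[M_1]$ and $[M_2]$, and concatenating the three equalities yields the stated formula.

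The step demanding the most care is the middle one: one has to verify both the deformation invariance of Serre's intersection number and that the standing hypotheses from Theorem 2.3 (in particular the codimension equalities $\operatorname{codim}\,C(M_i)=n_i-p_i+1$ and the transversality of each $\Gamma_{M_i}$ on a Zariski open dense subset of $C(M_i)$) are preserved along the perturbation so that Corollary 2.4 is still applicable to the deformed data. Once that bookkeeping is in place the corollary follows, and it can be read as the module analogue of the classical complete-intersection formula relating length, Fitting ideal, and Buchsbaum--Rim multiplicity, thereby extending the cited theorems of Buchsbaum and Rim.
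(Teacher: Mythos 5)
Your proposal is correct and follows essentially the same route as the paper: the paper's own justification is precisely the paragraph preceding the corollary, which identifies Serre's intersection number with $e(M_1,\mathcal{O}_{C(M_2),x})$ via deformation to the ideal case, shows that in the Cohen--Macaulay case the Buchsbaum--Rim complex stays exact after tensoring (by the acyclicity lemma) so that Serre's number is the single length $\ell(F^{p_1}/M_1\otimes F^{p_2}/M_2)$, and then invokes Corollary 2.4 to convert the multiplicity into the colength of the ideal of maximal minors. Your flagged concern about preserving the hypotheses of Theorem 2.3 under the perturbation is a reasonable point of care, but it does not change the argument, which matches the paper's.
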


\section{Indices of collections of 1-forms}

W. Ebeling and S. M. Gusein-Zade studied indices for collections of
1-forms \cite{EG, EG2}, in this section we will recall some ideas
and notation from their papers about these concepts.

If $P$ is a complex analytic manifold of dimension $n$, then its
Euler characteristic $\chi(P)$ is the characteristic number
$$\langle c_{n}(TP), [P]\rangle = (-1)^{n}\langle c_{n}(T^{*}P),
[P]\rangle,$$ where $TP$ is the tangent bundle of the manifold $P$,
$T^{*}P$ is the dual bundle, and $c_{n}$ is the corresponding Chern
class and $[P]$ the fundamental class of $P$.

The top Chern class of a vector bundle is the first obstruction to
the existence of a non-vanishing section. Other Chern classes are
obstructions to the existence of a linearly independent collection of
sections. There, instead of $1$-forms on a complex variety, we
consider collections of $1$-forms. Further, to calculate intersections of Chern Classes and hence Chern numbers, we will need collections of collections of $1$-forms.

Let $\pi\: E \to P$ be a complex analytic vector bundle of rank m
over a complex analytic manifold $P$ of dimension $n$. It is known
that the  ($2(n-k)$-dimensional) cycle Poincar\'{e} dual to the
characteristic classe $c_{k}(E)$  $(k = 1, \cdots, m)$ is
represented by the set of points of the manifold $P$ where $m-k+1$
generic sections of the vector bundle $E$ are linearly dependent.

We continue to use the notation of section two: Let $\mathcal{M}(p,q)$, $p\leq q$, be the space of $p\times q$
matrices with complex entries and let $D_{p,q}$ be the subspace of
$\mathcal{M}(p,q)$ consisting of matrices of rank less than $p$. The
subset $D_{p,q}$ is an irreducible subvariety of $\mathcal{M}(p,q)$
of codimension $q-p+1$. The complement $W_{p,q} =
\mathcal{M}(p,q)\setminus D_{p,q}$ is the Stiefel manifold of
$p$-frames in $\mathbb{C}^{q}$ . It is known that the Stiefel
manifold $W_{{p,q}}$ is $2(q-p)$-connected and
$H_{2(q-p)+1}(W_{p,q})\cong \mathbb{Z}$.

We now develop the notation necessary to handle collections of
collections of forms. For the rest of the paper, we  will  refer to
these objects simply as collections.

Let $\textbf{k}=(k_{1},\cdots, k_{s})$ be a sequence of positive
integers with $\sum_{i=1}^{s}k_{i}=k$. Consider the space
$\mathcal{M}_{m,\textbf{k}}= \prod_{i=1}^{s}
\mathcal{M}(m-k_{i}+1,m)$ and the subvariety
$D_{m,\textbf{k}}=\prod_{i={1}}^{s}D_{m-k_{i}+1,m}$ in it. The
variety $D_{m,\textbf{k}}$ consists of sets $\{A_{i}\}$ of
$(m-k_{i}+1\times m)$ matrices such that ${\rm  rk} \hskip 2pt A_{i}
< m-k_{i}+1$ for each $i=1, \cdots, s.$ Since $D_{m,\textbf{k}}$ is
irreducible of codimension $k$, its complement $W_{m,\textbf{k}}=
\mathcal{M}_{m,\textbf{k}}\setminus D_{m,\textbf{k}}$ is
$(2k-2)$-connected, $H_{2k-1}(W_{m,\textbf{k}})\cong \mathbb{Z}$,
and there is a natural choice of a map from an oriented manifold of
dimension $2k-1$ to the manifold $W_{m,\textbf{k}}$.

Let $(X^{d},0)\subset (\mathbb{C}^{n},0)$ be the germ of a purely
$n$-dimensional reduced complex analytic variety at the origin. For
$\textbf{k}=\{k_{i}\}, i=1, \cdots, s, j=1, \cdots, d-k_{i}+1$, let
$\{\omega_{j}^{(i)}\}$ be a collection of germs of $1$-forms on
$(\mathbb{C}^{n},0)$. (Note that $\{\omega_{j}^{(i)}\}$ for a fixed value of $i$, is itself a collection of $d-k_{i}+1$  $1$-forms.) Let $\varepsilon > 0$ be small enough so that
there is a representative $X$ of the germ $(X,0)$ and
representatives $\{\omega_{j}^{(i)}\}$ of the germs of $1$-forms
inside the ball $B_{\varepsilon}(0) \subset \mathbb{C}^{n}$.

The kind of points whose multiplicity we wish to compute is
described in the next section.

\section{Special Points}

\begin{definition} A point $p \in X$ is called a special point of
the collection $\{\omega_{j}^{(i)}\}$ of $1$-forms on the variety
$X$ if there exists a sequence $p_{m}$ of points from the
non-singular part $X_{reg}$ of the variety $X$ such that the
sequence $T_{p_{m}}X_{reg}$ of the tangent spaces at the points
$p_{m}$ has a limit $L$ (in $G(d,n)$) and the restriction of the
$1$-forms $\omega_{1}^{(i)}, \cdots, \omega_{d-k_{i}+1}^{(i)}$ to
the subspace $L \subset T_{p}\mathbb{C}^{n}$ are linearly dependent
for each $i=1, \cdots, s$. The collection $\{\omega_{j}^{(i)}\}$ of
$1$-forms has an isolated special point on $(X,0)$ if it has no
special point on $X$ in a punctured neighborhood of the origin.

\end{definition}
Notice that we require each element in the collection to be linearly
dependent when restricted to the same limit plane. Notice also, that
if an element of the collection has less than maximal rank at a
point, then it is linearly dependent on all planes passing through
the point.

The framework of this section is a variation on the setting used in
\cite{EG}. In developing the properties of special points, it is helpful to work on two levels, one of which is based on the Nash modification. The Nash modification comes into play because the tangent bundle of $X$ is not defined at singular points of $X$. However the Nash bundle is an extension of the tangent bundle on the modified space. We begin to describe this setting.

Let $\{\omega^{(i)}_{j}\}$ be a collection of germs of 1-forms on
$(X, 0)$ with an isolated special point at the origin. Let $\nu:
\widetilde{X} \to X$ be the Nash transformation of the variety $X$,
and $\widetilde{T}$ the Nash bundle. The collection of 1-forms
$\{\omega^{(i)}_{j}\}$ gives rise to a section $\Gamma({\omega})$ of
the bundle

$$\widetilde{\mathbb{T}} =
\bigoplus_{i=1}^{s}\bigoplus_{j=1}^{n-k_{i}+1}
\widetilde{T}^{*}_{i,j}$$
where $\widetilde{T}^{*}_{i,j}$ are copies of the dual Nash bundle
$\widetilde{T}^{*}$ over the Nash transform $\widetilde{X}$ numbered
by indices $i$ and $j$.

Let $\mathbb{D} \subset \widetilde{\mathbb{T}}$ be the set of pairs
$(x, \{\alpha^{(i)}_{j}\})$ where $x \in \widetilde{X}$ and the
collection $\{\alpha^{(i)}_{j}\}$ is such that $\alpha^{(i)}_{1}
,\cdots, \alpha^{(i)}_{n-k_{i}+1}$ are linearly dependent for each
$i = 1,\cdots., s.$

\begin{definition} The local Chern obstruction $Ch_{X,0}\{ \omega^{(i)}_{j}\}$ of the
collections of germs of 1-forms $\{ \omega^{(i)}_{j}\}$ on $(X,0)$
at the origin is the obstruction to extend the section
$\Gamma({\omega})$ of the fibre bundle
$\widetilde{\mathbb{T}}\setminus \mathbb{D} \to \widetilde{X}$ from
the preimage of a neighbourhood of the sphere $S_{\varepsilon} =
\partial B_{\varepsilon}$ to  $\widetilde{X}$, more precisely its value, as an element of the cohomology group $H^{2n}(\nu^{-1}(X \cap
B_{\varepsilon}), \nu^{-1}(X \cap S_{\varepsilon}), \mathbb{Z})$, on
the fundamental class of the pair $(\nu^{-1}(X \cap
B_{\varepsilon}), \nu^{-1}(X \cap S_{\varepsilon})).$
\end{definition}

The computation of the local Chern obstruction will be revisited in section 5.

The other setting for the study of special points is closer to $X$, and we describe it next. This setting will allow us to describe the number of special points as an intersection number.

Let $X^{d} \subset\mathbb{C}^{n}$, $\mathcal{L}^{k}$ be the set of
collections of $1$-forms respecting the partition of k as above $(k=k_{1}+k_{2}+ \cdots + k_{s})$,  
$\mathbb{D}^{k}_{X} \subset \mathbb{C}^{n}\times\mathcal{L}^{k}$ be
the closure of the set of pairs $(x, \{l^{i}_{j}\})$ such that $x
\in X_{reg}$ and the restriction of the linear functions $l^{i}_{1},
\cdots, l^{i}_{n-k_{i}+1}$ to $T_{x}X_{reg} \subset \mathbb{C}^{N}$
are linearly dependent for each $i=1, \cdots, s$.

Notice that the fiber of $\mathbb{D}^{k_i}_{X} $ over a regular
point $x$ of $X$ can be identified with the elements of $M(d-k_i+1,n)$
which have less than maximal rank when restricted to $T_{x}X$. Since
$T_{x}X$ is defined by $n-d$ equations,  the fiber of
$\mathbb{D}^{k_i}_{X} $ is itself a  fibration over the singular
matrices in $M(d-k_i+1,d)$, hence the restriction of
$\mathbb{D}^{k_i}_{X} $ to the regular points of $X$ has a
stratification by rank of the collection restricted to $T X_{x}$, $ x \in X_{reg}$.

The collection of $1$-forms $\{\omega^{(i)}_{j} \}$ defines a
section $\Gamma_{\omega}$ of $ \mathbb{C}^{n}\times\mathcal{L}^{k}$;
 we will assume in our results that the image of the
projection $\pi_{X}(\mathbb{D}_{X}^{k}\cap Im(\Gamma_{\omega}))$ is
isolated. Note that this implies that the intersection of  $Im(\Gamma_{\omega})$ and $\mathbb{D}_{X}^{k}$ is isolated as well,  since
$\pi_{X}\vert_{Im(\Gamma_{\omega})}$ is $1-1$. We will further assume that sets  $Im(\Gamma_{\omega})$ and $\mathbb{D}_{X}^{k}$ have complementary dimension viewed as subsets of $ \mathbb{C}^{n}\times\mathcal{L}^{k}$. Thus, their intersection number is well defined.

We are interested in computing this  intersection number.

As we will see, this amounts to computing the intersection number of a collection of sets defined by modules. The viewpoint of this paper is to compute this intersection number by successively restricting to the intersection of  $k-1$ elements of the collection. There is a technical condition which describes the way a ``good"  collection of these sets meet, given in Definition 4.5, which needs some preparation.
\begin{definition} Given  a pair $(x,P)$, $x\in X$,  $P$ in $G(d,n)$ the pair is degenerate for the collection $\{\omega_{j}\}$ $1\leq j\leq d-k+1$ at $x$ if
$\{\omega_{j}\}\vert_{P}$ is linearly dependent at $x$. Denote the
set of degenerate pairs for $\{\omega\}$ by $\mathbb{B}(\omega)$.
\end{definition}

\begin{Prop} Suppose the collection
$\{\omega\}$ is linearly independent at the origin. Then
$\mathbb{B}(\omega)$ has codimension $k$ in $X\times G(d,n)$.

\end{Prop}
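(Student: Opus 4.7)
The plan is to compute the codimension of $\mathbb{B}(\omega)$ by slicing fiberwise over $X$: I would show that the projection $\pi\colon \mathbb{B}(\omega) \to X$ has fibers of constant codimension $k$ in $G(d,n)$ on a neighborhood of the origin, and then read off the total codimension from the dimension formula.

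First, since the $d-k+1$ covectors $\omega_1(0), \ldots, \omega_{d-k+1}(0)$ are linearly independent in $T_0^*\mathbb{C}^n$, by continuity they remain linearly independent at every $x$ in a small neighborhood $U$ of $0$ in $X$. For such $x$, I would set $V_x^* = \operatorname{span}_{\mathbb{C}}\{\omega_1(x), \ldots, \omega_{d-k+1}(x)\}$, a $(d-k+1)$-dimensional subspace of $T_x^*\mathbb{C}^n \cong \mathbb{C}^{n*}$, and let $W_x \subset \mathbb{C}^n$ denote its annihilator, of dimension $n-d+k-1$. A short rank-nullity argument then yields
\[
\mathbb{B}(\omega) \cap \pi^{-1}(U) = \{(x,P) : \dim(P \cap W_x) \geq k\},
\]
since the restrictions $\omega_j(x)|_P$ are linearly dependent iff a nonzero element of $V_x^*$ vanishes on $P$, iff the surjection $P \to V_x$ dual to the inclusion $V_x^* \hookrightarrow \mathbb{C}^{n*}$ has kernel of dimension at least $k$, the kernel being precisely $P \cap W_x$.

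The fiber of $\pi$ over each $x \in U$ is therefore the Schubert variety $\Sigma(W_x) \subset G(d,n)$ of $d$-planes meeting the fixed $(n-d+k-1)$-plane $W_x$ in dimension $\geq k$. The key computation is to show that $\dim \Sigma(W_x) = d(n-d) - k$; I would verify this via a standard incidence construction, parametrizing pairs $(P,Q)$ with $Q \subset P \cap W_x$ and $\dim Q = k$, projecting first to $Q \in G(k, W_x)$ (of dimension $k(n-d-1)$) and then lifting to $P \supset Q$ (fibers of dimension $(d-k)(n-d)$), and noting that the map to $\Sigma(W_x)$ is generically one-to-one. Since the fibers of $\pi$ have constant dimension and $\pi$ is proper (as $G(d,n)$ is compact), I conclude $\dim \mathbb{B}(\omega) = d + d(n-d) - k$, so that the codimension in $X \times G(d,n)$ equals $k$. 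The main technical step is this Schubert dimension computation; the rest is essentially linear algebra together with the continuity of linear independence.
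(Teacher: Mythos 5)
Your proof is correct, but it follows a genuinely different route from the paper's. The paper argues on the product $U \times Hom(n,d-k+1)$, where $U$ is a chart of $G(d,n)$ in which a $d$-plane is recorded by the $(n-d)\times n$ matrix of its defining equations: stacking the $(d-k+1)\times n$ matrix of the forms on that matrix, degeneracy of the pair is exactly the condition that the resulting $(n-k+1)\times n$ matrix drop rank, and transversality of this map to the rank stratification (coming from the freedom to move the forms in $Hom(n,d-k+1)$) gives codimension $n-(n-k+1)+1=k$ for the incidence variety; fibering over the maximal-rank locus of $Hom(n,d-k+1)$ then yields the statement for the fixed, linearly independent $\omega$. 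You instead slice over $X$: near $0$ the forms span a $(d-k+1)$-plane $V_x^*$ with annihilator $W_x$ of dimension $n-d+k-1$, degeneracy of $(x,P)$ is equivalent to $\dim(P\cap W_x)\ge k$, and the fiber of $\mathbb{B}(\omega)$ over $x$ is the corresponding one-step Schubert variety, whose dimension $d(n-d)-k$ you compute by the standard incidence correspondence over $G(k,W_x)$. Both arguments are sound and the linear-algebra translation you use (non-surjectivity of $P\to V_x$, kernel $P\cap W_x$) checks out. The paper's version fits its general determinantal framework --- the codimension of $D_{p,q}$ and transversality to the rank stratification recur throughout, e.g. in Theorem 2.3 and Proposition 4.10 --- and gives the parenthetical dual statement (fixing $P$, varying the forms) for free; yours is more self-contained and makes the fiber structure of $\mathbb{B}(\omega)$ over $X$ completely explicit. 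Two minor points: properness of $\pi$ is not what the dimension count needs (constancy of fiber dimension over the equidimensional $U$ suffices), and, exactly as in the paper, the statement must be read as a germ statement near the origin, since far from $0$ the forms may become dependent and the fiber can jump.
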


\begin{proof} We can cover $G(d,n)$ with open sets as follows: pick
a coordinate plane $P$ of dimension $d$ and a plane of complementary dimension using the complementary coordinates, which we denote by $\hat P$. Clearly, the complementary plane intersects $P$ only at the origin. Consider all planes which are the graphs of a linear
map from $P$ to  $\hat P$. The equations of these graphs
give a unique set of equations describing the plane, and thus associate a matrix of size $(n-d\times n)$ to each plane.

These planes are just the planes that intersect $\hat P$ at the origin, and thus are a Zariski open subset of $G(d,n)$.

Suppose $U$ is such an open set, then construct the map from $U
\times Hom(n,d-k+1)$ to $ Hom(n, n-k+1)$ by combining the $2$ matrices - the element of
$Hom(n,d-k+1)$ and the matrix of equations describing points of $U$.

This matrix has size $(n-k+1 \times n)$ and as a map from $$U\times
Hom(n, d-k+1) \to Hom(n, n-k+1)$$ is transverse to the rank
stratification. So the codimension of the set of pairs which give
matrices of less than maximal rank is $(n)-(n-k+1)+1=k$.

Working globally, it is clear that the set of degenerate pairs is a fibration over the set of elements of $Hom(n,d-k+1)$ of maximal rank. So fixing
$\omega$ we get that the set of degenerate planes has codimension
$k$. (Also fixing a plane $P$, the set of $L \in Hom(n,d-k+1)$ for
which the plane is degenerate also has codimension k.)
\end{proof}

If we have a collection of forms $\{\omega^{(i)}_{j} \}$ with
$\sum_{i=1}^{s}k_{i}=k$, every element of which is linearly
independent at the origin, then $\mathbb{B}(\omega)$ denotes the
planes which are degenerate for every element of the collection. It
has codimension less than or equal to $k$.

\begin{definition} Given $X^{d},0 \subset \mathbb{C}^{n},0$ with $0\in S(X)$ and a
collection  $\{\omega^{(i)}_{j} \}$with  $\sum_{i=1}^{s}k_{i}=k$,
$k\le d$ such that each element of the collection is linearly
independent at 0, we say that the collection is proper for $X^d$ if
$dim(\widetilde{X}(S(X))\cap \mathbb{B}(\omega)) \leq d-k-1$ where
$\widetilde{X}(S(X))$ is the restriction  of the Nash modification
of $X$ to $S(X)$, the singular set of $X$. If this condition holds
for a  collection of forms  linearly
independent at 0, with the exception of components
of the intersection over the origin, we say the collection is proper
on a deleted neighborhood of the origin.

If $X$ is smooth at $0$, then we ask $dim(\widetilde{X}(0)\cap \mathbb{B}(\omega)) \leq d-k-1$

\end{definition}

\begin{remark}The dimension $\widetilde{X}(S(X))$ is at most $d-1$; if there
is a component of dimension $d-1$, the condition just asks that the
component meets $B(\omega)$ properly. Since the dimension of all components of $\widetilde{X}\cap B(\omega)$ is at least $d-k$, the properness condition implies that a point of $\widetilde{X}(S(X))\cap \mathbb{B}(\omega)$ is in the closure of points of the intersection lying over smooth points of $X$. Note also that if $k=d$, and the collection is proper, then $\widetilde{X}(S(X))\cap \mathbb{B}(\omega)$ is empty.
\end{remark}

For the geometric description we need of special points, it is
necessary to lift our constructions to the level of the Nash
modification.

On $\widetilde{X}\times \mathcal{L}^{k}$ we can consider triples
$(x, P, L)$ where $P$ is a degenerate plane for $L$. Call the space of
triples $\mathbb{D}^{k}$. It is clearly a fibration over
$\widetilde{X}$.

Thinking of  $\mathbb{C}^{n}\times G(d,n)\times\mathcal{L}^{k}$ as a trivial fibration over $\mathbb{C}^{n}\times G(d,n)$, we have the
section induced by $\omega$ which we  denote by $\Gamma_{\omega,
G}$. Note that if we restrict  $\Gamma_{\omega,
G}$ to $\widetilde X$, then $\Gamma_{\omega,
G}^{-1}(\mathbb{D}^{k})=\mathbb{B}(\omega)\cap \widetilde X$.  Now the image of $\Gamma_{\omega,G}$ has dimension $n+d(n-d)$,
while $\mathbb{D}^{k}$ has codimension $(n-d)+d(n-d)+k$ so the
expected dimension of the intersection is $d-k$. Denote the
projection of the intersection to $X$ by $S(\omega)$. We can make
$k$ a multi-index and make similar constructions; we get the
expected dimension of $S(\omega^{(i)}_{j})$ is $d-(k_{1}+ \cdots +
k_{s})$.

Suppose $\{\omega^{(i)}_{j}\}$ is a collection of $1$ forms such
that the $\sum k_{i}=d$ and $0$ is an isolated special point. Then
all of the various $S(\omega^{(i)}_{j})$ using different
subcollections must have the correct expected dimension; for if
$S(\omega^{(i)}_{j})$ is too large for one subcollection, the excess
dimension will be passed to the others and $0$ will not be isolated.

Denote $\mathbb{D}^{k}\cap Im(\Gamma_{\omega,G})$ by
$S_{N}(\omega)$.

We will also be interested in the notion of a restricted special point; given a collection of $1$-forms  $\omega_{1}^{(i)}, \cdots, \omega_{d-k_{i}+1}^{(i)}$, $1\le i\le s$, we say $p$ is a {\it restricted special point} of the collection if it is a special point, and the sequence of points  $p_{m}$ are in $S(\omega^{(i)}_{j})$, $1\le i\le  s-1$. In the next proposition we will prove that if the collection $\omega^{(i)}_{j})$, $1\le i\le  s-1$ is proper, then every special point is a restricted special point. \vspace{.3cm}

\subsection{Setup} Here we describe our assumption about the
collections.

Let $X^{d},0 \subset \mathbb{C}^{n},0$ and $\{\omega^{(i)}_{j}\}$, a
collection of $1$-forms with $1\leq i \leq s$, $1\leq j \leq
d-k_{i}+1$, $\sum k_{i}=d.$

Assume the collection arranged so that the first $r$ collections are
 $1$-forms which are linearly independent at $0$. We assume
the $1$-forms in the remaining collection are all linearly dependent
at the origin. We assume the collection has an isolated singularity
at the origin, and that the generic point of $S(\omega^{(i)}_{j})$,
$1\leq i \leq s-1$, is in $X_{reg}$. If $r=s$ we also assume that the
collection made up of the first $s-1$ elements is proper for $X$.

\begin{Prop} If, in the above set-up $0$ is a isolated special point of
the collection then there exists a curve $C$ on
$S(\omega^{(i)}_{j})$, $1\leq i \leq s-1$, generically in $X_{reg}$
such that $\{\omega\}$ is linearly dependent when restricted to the
limiting tangent plane $T$ at the origin, and the origin is the only
point on $S(\omega^{(i)}_{j})$ with this property.
\end{Prop}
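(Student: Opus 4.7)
The plan is to lift the problem to the Nash modification, produce the required curve there by a dimension count, and then project it down. First, I would unpack the definition of ``isolated special point'' at $0$: it gives a sequence $p_{m} \in X_{reg}$ with $p_{m} \to 0$, with $T_{p_{m}}X_{reg}$ converging to some $L \in G(d,n)$, and with the full collection $\{\omega\}$ linearly dependent on $L$. In particular, each of the first $s-1$ sub-collections is linearly dependent on $L$, so the point $(0,L) \in \widetilde X$ lies in $S_N(\omega^{(i)}_j)$, $1 \leq i \leq s-1$, namely in $\widetilde X \cap \mathbb{B}(\omega^{(i)}_j, 1 \leq i \leq s-1)$.

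Next I would produce the curve. By Proposition 4.3 applied to each sub-collection, the subset $\widetilde X \cap \mathbb{B}(\omega^{(i)}_j,\, 1\leq i \leq s-1)$ has codimension at most $k_1 + \cdots + k_{s-1} = d - k_s$ in $\widetilde X$, so the analytic form of Krull's height theorem guarantees that every local component through the point $(0,L)$ has dimension at least $k_s \geq 1$. Inside such a component I would select an irreducible curve $\widetilde C$ through $(0,L)$. The properness assumption on the first $s-1$ sub-collection forces $\dim\bigl(\widetilde X(S(X)) \cap \mathbb{B}(\omega^{(i)}_j,\, 1\leq i \leq s-1)\bigr) \leq k_s - 1$, which is strictly smaller, so $\widetilde C$ can be chosen with generic point lying over $X_{reg}$. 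Setting $C = \nu(\widetilde C)$ then gives a curve on $S(\omega^{(i)}_j)$, $1 \leq i \leq s-1$, generically contained in $X_{reg}$.

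The limiting tangent plane claim is then immediate from the construction of the Nash modification: along $\widetilde C$, as we approach $(0,L)$, the second coordinate (which records the tangent plane at a smooth point of $X$) tends to $L$, so the tangent planes $T_{c}X_{reg}$ along the smooth part of $C$ converge to $T := L$, on which $\{\omega\}$ is dependent by the original choice of $L$. For the uniqueness statement, I would argue directly from the definition of special point: any $q \in S(\omega^{(i)}_j)$, $1 \leq i \leq s-1$, admitting a curve generically in $X_{reg}$ whose limiting tangent plane at $q$ makes $\{\omega\}$ linearly dependent is by definition a special point of the full collection; by the isolated special point hypothesis, working in a sufficiently small neighborhood of $0$, such a $q$ must equal $0$.

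The step I expect to be the delicate one is producing the curve $\widetilde C$ through $(0,L)$ with generic point over $X_{reg}$: both the lower bound $k_s$ on the local dimension of $\widetilde X \cap \mathbb{B}$ at $(0,L)$ and the strictly smaller upper bound $k_s - 1$ on the dimension of its restriction over $S(X)$ must hold, and this is precisely where the properness hypothesis built into the setup is used. Everything else (the limiting tangent plane identification and the uniqueness) is then essentially formal.
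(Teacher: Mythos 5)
Your argument in the main case of the setup, $r=s$ (every collection linearly independent at the origin), is essentially the paper's: the paper also starts from the plane $L$ witnessing the special point, notes $(0,L)\in\widetilde X\cap\mathbb{B}(\omega^{(i)})_{i\le s-1}$, combines the lower bound $\dim \ge k_s$ on every component of this set with the properness bound $\dim\bigl(\widetilde X(S(X))\cap\mathbb{B}\bigr)\le k_s-1$ to conclude that no component lies over $S(X)$, and then takes a curve through $(0,L)$ generically over $X_{reg}$ and projects it down. (One small point: the lower bound on the dimension of the degeneracy locus is the Eagon--Northcott bound on heights of determinantal ideals, not Krull's height theorem applied to $k_1+\cdots+k_{s-1}$ equations; the locus is cut out by far more minors than that.)

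The gap is in the other case, $r<s$, where the last $s-r$ collections are linearly dependent at the origin. The setup grants properness of the first $s-1$ collections \emph{only} when $r=s$; when $r<s$ your sentence ``the properness assumption on the first $s-1$ sub-collection forces $\dim(\widetilde X(S(X))\cap\mathbb{B})\le k_s-1$'' invokes a hypothesis that is not there, and the conclusion can genuinely fail. Indeed, properness (Definition 4.5) is only defined for collections each of whose members is linearly independent at $0$; if some $\omega^{(i)}$ with $i\le s-1$ is dependent at $0$, the entire fiber $\widetilde X(0)$ lies in $\mathbb{B}(\omega^{(i)})$, so $\widetilde X(S(X))\cap\mathbb{B}(\omega^{(i)})_{i\le s-1}$ can have dimension $\ge k_s$ and the component through $(0,L)$ may sit entirely over $S(X)$ --- then no curve $\widetilde C$ through $(0,L)$ is generically over $X_{reg}$ and your construction stops. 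The paper handles this case separately and more cheaply: since $\omega^{(s)}$ is linearly dependent at $0$, it is dependent on \emph{every} plane through $0$, so one does not need the curve to detect the particular plane $L$; any curve on $S(\omega^{(i)}_j)_{i\le s-1}$ through $0$ and generically in $X_{reg}$ (which exists by the dimension count together with the setup's hypothesis that the generic point of $S(\omega^{(i)}_j)_{i\le s-1}$ lies in $X_{reg}$) already has a limiting tangent plane on which the whole collection is dependent. You need to add this case distinction; with it, the rest of your proof (the identification of the limiting tangent plane via the Nash fiber, and the uniqueness statement from isolatedness) is fine and agrees with the paper.
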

\begin{proof} There are two cases to consider.

$1-$ Assume $r < s$, assume a special point exists. This is also a
special point for the collection with the first $s-1$ elements. Thus
$S(\omega^{(i)}_{j})$, $1\leq i \leq s-1$, has positive dimension and
its generic point is in $X_{reg}$. For $C$, use any curve on
$S(\omega^{(i)}_{j})$, and let $T$ be the limit tangent plane,
$T_{t}$ the tangent plane to $X$ at point $t$ on curve $C$.

Then $\{\omega^{(i)}_{j}\}$, $1\leq i \leq s-1$, are linearly
dependent on $T_{t}$ since our point is in $S(\omega^{s}_{j})$ hence
they are linearly dependent on $T$.

Since $\{\omega^{(s)}_{j}\}$ is linearly dependent at zero,  they
are linearly dependent on $T$ also.

Clearly, any point for which such a curve exists is a special point,
so the origin is the only such point.

$2-$ $(r=s) -$ Assume we have a special point then
$S(\omega^{(i)}_{j})$, $i\leq s-1$, has positive dimension with
generic point in $X_{reg}$. Denote the limit tangent plane on which
all of our collections restrict to be linearly dependent by $T$.

By the properness assumption, no component of $S_{N}(\omega)$,
$i\leq s-1$ can lie over $S(X)$; for every component of
$S_{N}(\omega)$ must have dimension $d-k$, $k=\sum k_i$, $1\le i\le
s-1$, while by the properness assumption the points over $S(X)$ must
have dimension $d-1-k$ or less.

This implies there exist a curve $\varphi: \mathbb{C},0 \to X,0$
generic point in $X_{reg}\cap S(\omega^{i}_{j})$ with $i\leq s-1$
such that  the limit tangent plane is $T$.

Now all the members of our collection are linearly dependent on $T$
including $\{\omega^{(s)}_{j}\}$.
\end{proof}

The previous proposition explains why we are interested in collections which are proper. The properness condition means that if we have a special point, then it is a restricted special point as we can realize the limiting plane on which the collection is dependent as a limit of tangents to $X$ along a curve in some $S(\omega^{(i)}_{j})$. This is the key to our ability to study the intersection number of $Im(\Gamma_{\omega})$ and $\mathbb{D}_{X}^{k}$ by restricting to $S(\omega^{(i)}_{j})$.

There is a converse to the proposition which requires a stronger
genericity condition.

\begin{Prop} Suppose in the setup of this section, the collection made
up of the first $s-1$ elements is proper for $X$, if the elements of the
collection are linearly independent at the origin, and if not, they are
proper for $X$ on a deleted neighborhood of the origin.
Suppose the origin is the only point where there exists a curve $C$
on $S(\omega^{(i)}_{j})$, $1\leq i \leq s-1$ generically in $X_{reg}$
such that $\{\omega\}$ is linearly dependent when restricted to the
limiting tangent plane $T$ at the origin. Then the origin is an
isolated special point of the collection.\end{Prop}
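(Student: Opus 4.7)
The plan is to argue by contradiction. Suppose the origin is not isolated as a special point of $\{\omega^{(i)}_j\}$, and pick a sequence $p_m\to 0$, $p_m\ne 0$, of special points, each coming with a witnessing limit plane $L_m\in G(d,n)$. By the definition of a special point, $(p_m,L_m)\in\widetilde X$ and $\omega^{(i)}_j\vert_{L_m}$ is linearly dependent for every $i$. Writing $\omega' := \{\omega^{(i)}_j\}_{1\le i\le s-1}$ for the sub-collection formed by the first $s-1$ items, the pair $(p_m,L_m)$ thus lies in $\widetilde X\cap\mathbb B(\omega')$.

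The heart of the argument is to use the properness hypothesis on $\omega'$ to associate to each such $p_m$ a curve of exactly the kind that the hypothesis allows only at $0$. By the earlier codimension bound for $\mathbb B$, applied to a sub-collection whose total codimension count is $d-k_s$, $\mathbb B(\omega')$ has codimension at most $d-k_s$ in $X\times G(d,n)$, so every component of $\widetilde X\cap\mathbb B(\omega')$ has local dimension at least $k_s$. The properness hypothesis -- which in either case of the setup holds on a punctured neighbourhood of $0$ -- gives $\dim(\widetilde X(S(X))\cap\mathbb B(\omega'))\le k_s-1$ there. Hence for $m$ large, every component of $\widetilde X\cap\mathbb B(\omega')$ passing through $(p_m,L_m)$ has generic point lying over $X_{reg}$. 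The analytic curve selection lemma, applied inside such a component, produces an arc $\varphi_m\colon(\mathbb C,0)\to\widetilde X\cap\mathbb B(\omega')$ with $\varphi_m(0)=(p_m,L_m)$ whose generic image lies over $X_{reg}$. Projecting by $\nu$ gives $C_m:=\nu\circ\varphi_m$, an analytic curve which is generically in $X_{reg}\cap S(\omega^{(i)}_j)$ for $1\le i\le s-1$; and because $\widetilde X$ over $X_{reg}$ coincides with the graph of the Gauss map, the tangent planes to $X$ along $C_m$ converge to $L_m$ as one approaches $p_m$.

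To conclude, since $p_m$ is a special point for the full collection, $\omega^{(s)}_j\vert_{L_m}$ is also linearly dependent, so $\{\omega\}\vert_{L_m}$ is linearly dependent. Thus $C_m$ exhibits at $p_m$ the property that the hypothesis permits only at the origin, contradicting $p_m\ne 0$.

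The delicate step is the middle paragraph: one must guarantee that the arc extracted has its tangent-plane limit at $p_m$ equal to $L_m$ and that its image sits generically in $X_{reg}\cap S(\omega^{(i)}_j)$ for $1\le i\le s-1$. The properness assumption is precisely the dimension-theoretic input that forces the component of $\widetilde X\cap\mathbb B(\omega')$ through $(p_m,L_m)$ to have the expected dimension $k_s$ with generic point over the smooth locus, which is what makes curve selection deliver an arc with these features.
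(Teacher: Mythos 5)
Your proposal is correct and follows essentially the same route as the paper: the paper's own proof is the one-line observation that non-isolatedness would let one ``apply the previous proposition to find curves detecting the nearby special points as well,'' and your argument is precisely a detailed unpacking of that step, rerunning the dimension count for $\widetilde X\cap\mathbb B(\omega')$ together with properness and curve selection at each nearby special point $(p_m,L_m)$. The only added value is that you avoid invoking the previous proposition verbatim (whose hypothesis of an isolated special point need not hold at $p_m$) by redoing its curve construction directly, which is a mild gain in rigor but not a different method.
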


\begin{proof} Clearly the origin is a special point. If it were non-isolated,
then we could apply the previous proposition to find curves detecting the
nearby special points as well. \end{proof}

These two propositions show that when studying the behavior of
special points, with the right genericity requirements, we can
restrict from $X$ to $S(\omega^{(i)}_{j})$, $1\leq i \leq s-1$, and
having done so consider only the last element of the collection.

The next proposition serves as a ``moving lemma".
% by varying only
%the last element of the collection we can put the collection in
%``general position".

This proposition and its corollary, together with the
multiplicity polar theorem, will show that the invariant of the next
section  computes the intersection number of $Im(\Gamma_{\omega})$
and $\mathbb{D}_{X}^{k}$. The argument we give is adapted from that
appearing in \cite{G-7}, Theorem 1.2, p187.

To prove our proposition we want to consider the map:

$$\Theta: \mathbb{D}_{X}^{k_s}\vert_{S(\omega^{(i)})_{i\leq
s-1}} \times M(d-k_{s}+1,n) \to M(d-k_{s}+1,n)$$ where $\Theta
((x,L),M) \to L - (\omega(x)+M)$.

 If we resolve the singularities of the set
$ \mathbb{D}_{X}^{k_s}\vert_{S(\omega)}$, then the composition $\Theta \circ \pi
_{ \mathbb{D}_{X}^{k_s}}$ is a submersion because of the  contribution from the
$M$ term.

In resolving these singularities there may be multiple components. For example, for $X^2$ a surface in $\mathbb{C}^3$ with an isolated singularity at the origin, then if $s=2$ and $\omega$ consists of two forms then $ \mathbb{D}_{X}^{k_2}\vert_{S(\omega)}$
has $(0,M(2,3))$ as a component. This follows because the polar curve of $X^2$ is non-empty, which implies that the generic element of $M(2,3)$ has less than maximal rank on a curve on $X^2$, hence lies in $ \mathbb{D}_{X}^{k_2}$ along that curve.  However, there will be a unique component for each component of ${S(\omega)}$ which surjects onto that component. Denote the components of $ \mathbb{D}_{X}^{k_s}\vert_{S(\omega)}$ which surject onto ${S(\omega)}$ by $ \mathbb{D}_{S(\omega)}^{k_s}$. The fiber of these components over the origin are those collections of forms which are the limits of forms degenerate along a curve in
${S(\omega)}$.

Let $C$ denote  $\Theta^{-1}(0)\cap ( \mathbb{D}_{S(\omega)}^{k_s}\times M(d-k_{s}+1,n))$ and consider the projection $p$ from $C
\to M(d-k_{s}+1,n)$.

Now $$\dim C = \dim S(\omega^{(i)})_{i\leq s-1} +\dim (\text{generic fiber of }D^{k_{s}})=k_{s} + [(d-k_{s}+1)(n)-k_{s}].$$

The dimension of $S(\omega^{(i)})_{i\leq s-1}$ is $k_s$, because, since we have an isolated singularity, the dimension of $S(\omega^{(i)})_{i\leq s}$ must be $0$.

The expression in $[ \hspace{.5cm} ]$ above holds because the
codimension of $D^{k_{s}}$ in $M(,)$ is just
$$-[(d-k_{s}+1)+(n-d)]+n+1=k_{s},$$ so the map $C \to M(d-k_{s}+1,n)$
is a map between equidimensional spaces. Assuming that $0\in X$ is an isolated special point of the collection, the fiber over $0$ of $p$ is
a single point $\omega(0)$.

Earlier in this section we began to look at the intersection number of $Im(\Gamma_{\omega})$ and $\mathbb{D}_{X}^{k}$. Restricting to $S(\omega^{(i)})_{i\leq s-1}$  we can also look at the intersection number of $Im(\Gamma_{\omega^{(s)}})$ and $ \mathbb{D}_{S(\omega)}^{k_s}$. Our moving lemma will be used to calculate this piece of the intersection number of $Im(\Gamma_{\omega})$ and $\mathbb{D}_{X}^{k}$.

\begin{definition} A special point of a collection $\{\omega^{(i)}_{j}\}$
of germs of $1$-forms on $X$ is non-degenerate, if the section
$\Gamma_{\omega^{(i)}_{j}} {1\leq i\leq s}$, meets
$\mathbb{D}_{X}^{k}$ transversally at the point.
\end{definition}

We can now state our proposition.

\begin{Prop}\label{proposicao} Given a collection as in the set up of this section, assume that the section
$\Gamma_{\omega^{(i)}_{j}} $, ${1\leq i\leq s-1} $ meets
$\mathbb{D}_{X}^{k}$ transversely on a Z-open subset of
${S(\omega^{(i)})_{i\leq s-1}}$. Then for generic $M$, the
collection $\{ \omega^s+tM\}$ meets
$\mathbb{D}_{S(\omega)}^{k_s}$ transversely at all points close to the origin
for $t$ sufficiently small, $t\ne 0$. The number of such points
is just the degree of the projection from $C$ to $M(d-k_{s}+1,n)$
over the origin in $M(d-k_{s}+1,n)$. Further, each such point is a non-degenerate point of the collection $\{\omega^{(i)}_{j}\}$, ${1\leq i\leq s} $.\end{Prop}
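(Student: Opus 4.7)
The plan is to apply a parametric Sard--Bertini argument to the map $p : C \to M(d-k_s+1,n)$ introduced immediately before the proposition, where $C = \Theta^{-1}(0) \cap (\mathbb{D}_{S(\omega)}^{k_s} \times M(d-k_s+1,n))$. A point $((x,L),M) \in C$ satisfies $L = \omega^{(s)}(x)+M$ with $(x,L) \in \mathbb{D}_{S(\omega)}^{k_s}$, so fibers $p^{-1}(tM)$ correspond exactly to points $x$ near $0$ where the perturbed collection $\omega^{(s)}+tM$ is degenerate along a plane $L$ lying in a component of $\mathbb{D}_X^{k_s}|_{S(\omega)}$ that surjects onto $S(\omega)$. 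Because the $M$-variable moves freely in the target of $\Theta$, after resolving the singularities of $\mathbb{D}_{S(\omega)}^{k_s}$ the composition $\Theta \circ \pi$ is a submersion on the smooth resolved space $\widetilde{C}$ (as already observed in the paper).

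By the dimension count carried out just above the proposition, $\widetilde{C}$ and $M(d-k_s+1,n)$ are equidimensional, so the induced map $\widetilde p : \widetilde C \to M(d-k_s+1,n)$ is a map between smooth spaces of the same dimension. Sard's theorem then produces a Z-open dense set of regular values $M$. Shrinking this set if necessary, we may also arrange that the finitely many preimages in the fiber $p^{-1}(tM)$ project into the Z-open subset of $S(\omega^{(i)})_{i \le s-1}$ on which the sections $\Gamma_{\omega^{(i)}_j}$, $i \le s-1$, already meet $\mathbb{D}_X^k$ transversely. At each such preimage, unravelling $\Theta=0$ shows that the regularity of $p$ is precisely the transversality of $\Gamma_{\omega^{(s)}+tM}$ with $\mathbb{D}_{S(\omega)}^{k_s}$ at that point.

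Counting the preimages: since $0$ is an isolated special point of $\{\omega^{(i)}_j\}_{i \le s}$, the only preimage of $0$ under $p$ restricted to points lying over a small neighborhood of $0 \in X$ is $\omega^{(s)}(0)$. Consequently $p$ is locally finite at this point and has a well-defined local degree there. For generic small $tM$ the fiber $p^{-1}(tM)$ consists of exactly $\deg_{\omega^{(s)}(0)} p$ distinct points, each counted with multiplicity one by the transversality just established. To promote transversality of $\Gamma_{\omega^{(s)}+tM}$ with $\mathbb{D}_{S(\omega)}^{k_s}$ to non-degeneracy of the full collection $\{\omega^{(i)}_j\}_{i\le s}$, we use the product structure $\mathcal{L}^k = \prod_{i=1}^s \mathcal{L}^{k_i}$ together with the corresponding splitting of $\mathbb{D}_X^k$ along the smooth stratum: the hypothesized transversality of the first $s-1$ sections along $S(\omega^{(i)})_{i\le s-1}$ provides transversality in the first $s-1$ factors, while the Sard output supplies the missing $s$-th factor, and combining them yields transversality of $\Gamma_\omega$ with $\mathbb{D}_X^k$ at the point, i.e.\ non-degeneracy.

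The main obstacle is the resolution step together with the identification of the ``correct'' components $\mathbb{D}_{S(\omega)}^{k_s}$: as the paper's surface example illustrates, extraneous components of $\mathbb{D}_X^{k_s}|_{S(\omega)}$ can sit over the origin, so one must restrict the Sard argument to the components that surject onto $S(\omega)$, otherwise the generic fiber would pick up spurious points unrelated to deformations of the special point. Verifying that the resolved space of these components is smooth and that $\widetilde p$ is truly a map of equidimensional smooth spaces is the real technical content; everything else is Sard plus a transversality product.
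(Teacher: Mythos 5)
Your proposal follows essentially the same route as the paper: both work with the map $p\colon C \to M(d-k_{s}+1,n)$, use genericity of $M$ (you via Sard, the paper by choosing $M$ off the discriminant $\Delta(p)$ with the segment from $0$ to $M$ avoiding it near $0$, which is what actually justifies the claim for \emph{all} small $t\neq 0$ rather than for a generic single matrix), translate regularity of $p$ at a fiber point into transversality of the perturbed section via the splitting of the tangent space into the direction along $S(\omega^{(i)})_{i\leq s-1}$ and the fiber direction, and then combine with the hypothesized transversality of the first $s-1$ sections to get non-degeneracy of the whole collection.

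The one step you explicitly defer --- and slightly misdiagnose --- is closed in the paper by a short argument. Smoothness of the resolved space and the equidimensionality count are not the issue (the first is automatic, the second is done just before the proposition); the issue is descending from regularity of $\widetilde{p}$ on the resolution to transversality of $\Gamma_{\omega^{(s)}+tM}$ with $\mathbb{D}_{S(\omega)}^{k_{s}}$ itself at a fiber point where the resolution might fail to be an isomorphism (e.g.\ where the matrix drops rank by more than one, or $x$ is a singular point of $S(\omega)$). The paper dispatches this by observing that if the resolution is not an equivalence at such a point, then some tangent vectors upstairs lie in the kernel of the differential of the projection, so $d\widetilde{p}$ could not be surjective there, contradicting the choice of $M$ as a regular value; hence for generic $M$ every fiber point lies where the resolution is an equivalence and the direct-sum argument applies. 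Your ``shrink the set of good $M$'' remark points in the right direction but should be made into this kernel argument (or an explicit statement that the image of the non-equivalence locus under $p$ is nowhere dense) to be complete.
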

\begin{proof}

Pick $M$ in the complement of the $\Delta(p)$, the discriminant of
the projection from $C$ to $M(d-k_{s}+1,n)$, such that the line
between $0$ and $M$ does not intersect these sets close to $0$.

Over the points of this line close to $0$, the number of points is
the degree of $p$ and $p$ is a submersion at each point. This
implies that the map obtained by fixing the $M$ term in $\Theta$  is
	a submersion also. Note that the dimension of the source and
target of this map are the same, hence the map is in fact a
diffeomorphism.

We are interested in exploring the consequences of this fact.

Let us first consider the case where at the points on the fiber of
$p$ over $tM$, $t$ small, $x$ is in the regular part of
${S(\omega^{(i)})_{i\leq s-1}}$, and the element in
$\mathbb{D}^{k_{s}}$ has rank one less than maximal when restricted
to $TX_x$. Then the resolution of $D^{k_{s}}\vert_{S(\omega)}$ is an
equivalence at such points because $D^{k_{s}}\vert_{S(\omega)}$ is
smooth there, so we can work on the tangent space of
$D^{k_{s}}\vert_{S(\omega)}$. At each point this splits into a direct
sum--the part along ${S(\omega^{(i)})_{i\leq s-1}}$, and the part
along the fiber. There is a similar decomposition of the tangent
space of the target--the part which can be identified with the fiber
in the source, and the normal space to this. The differential is the
identity on the tangent space to the fiber, so since the
differential is surjective, the restriction of the differential to
the tangent space to ${S(\omega^{(i)})_{i\leq s-1}}$ must surject
onto the normal space to the fiber. In turn this implies that the
section induced from $\omega +tM$ meets  $D^{k}$ transversely. In
fact, since for transversality we just need the tangent vectors to
$S(\omega^{(i)})_{i\leq s-1}$, and  the other elements of the
collection intersect $\mathbb{D}_{X}^{k}$  transversely, the
collection  $\{(\omega^{(i)})_{i\leq s-1}, \omega^s+tM\}$ meets
$\mathbb{D}_{X}^{k}$ transversely.

In the general case, note that the assumptions we made above
coincide with the resolution being an equivalence. If the resolution
is not an equivalence, then some tangent vectors on the resolution
will be in the kernel of the differential of the projection, hence
the differential will not be surjective, contradicting our choice of
$M$. So we only need to consider the above special case.

\end{proof}
\begin{remark} If $\{ \omega^{(i)}\}$, $1\le i\le r$ is the maximal subset of our collections which meet properly at the origin, then we can choose $M$ so that $\{ \omega^{(i)}, \omega^s+tM\}$, $1\le i\le r$ also meet properly at the origin for $t\ne 0$. This will be implicit in our application of our moving lemma.
\end{remark}

Denote the collection obtained by moving our last element by ${\widetilde{\omega}}$.
\begin{cor} In the set-up of last proposition we have

$$Im(\Gamma_{\omega})\cdot \mathbb{D}_{X}^{k}=
\Gamma(\omega_s)\cdot \mathbb{D}^{k_{s}}|_{S(\omega^{(i)})_{i\leq s-1}}+ Im(\Gamma_{\widetilde{\omega}})\cdot \mathbb{D}_{X}^{k}.$$ \end{cor}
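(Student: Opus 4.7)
The strategy is to exploit conservation of intersection numbers under small deformation of a section, and use the proposition as a ``splitting'' mechanism: perturbing the last element $\omega^s$ to $\omega^s+tM$ moves some intersection points of $Im(\Gamma_{\omega})\cap\mathbb{D}_{X}^{k}$ away from $0$ while leaving a residual intersection at the origin, and the ``moved'' points are exactly those counted by the intersection on $S(\omega^{(i)})_{i\le s-1}$.

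First, I would invoke the conservation principle for isolated intersection numbers: since $Im(\Gamma_{\omega})\cap\mathbb{D}_{X}^{k}$ is isolated at $0$ with complementary dimensions, for any sufficiently small $t\ne 0$ and generic $M$, the deformation $\widetilde\omega=\{\omega^{(1)},\ldots,\omega^{(s-1)},\omega^s+tM\}$ satisfies
\[
Im(\Gamma_{\omega})\cdot \mathbb{D}_{X}^{k}\bigl|_{0} \;=\; \sum_{p \in U} Im(\Gamma_{\widetilde\omega})\cdot \mathbb{D}_{X}^{k}\bigl|_{p},
\]
where $U$ is a small neighborhood of $0$. I would then split the right-hand sum into the contribution at $p=0$ and the contributions at ``moved-away'' points $p\ne 0$.

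Next, I would identify the moved-away points. Any $p\in U\setminus\{0\}$ appearing in $Im(\Gamma_{\widetilde\omega})\cap \mathbb{D}_{X}^{k}$ still has the first $s-1$ collections degenerate on some limit tangent plane at $p$, so $p\in S(\omega^{(i)})_{i\le s-1}$; conversely the condition on $\omega^s+tM$ at such a point is exactly that $\Gamma_{\omega^s+tM}(p)\in\mathbb{D}^{k_s}|_{S(\omega^{(i)})_{i\le s-1}}$. Hence the moved-away points are precisely the intersection points of the deformed section $\Gamma_{\omega^s+tM}$ with $\mathbb{D}^{k_s}|_{S(\omega^{(i)})_{i\le s-1}}$ near $0$. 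By Proposition~\ref{proposicao}, for generic $M$ the deformed section is transverse to $\mathbb{D}^{k_s}|_{S(\omega^{(i)})_{i\le s-1}}$ at each such point, so each contributes $1$, and the total count equals the degree over $0$ of the projection $p\colon C\to M(d-k_s+1,n)$. The last clause of the proposition further guarantees that these points are non-degenerate intersection points of the full collection $\widetilde\omega$ with $\mathbb{D}_{X}^{k}$, so they contribute $1$ to the left-hand intersection number as well.

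Finally I would match this count with $\Gamma(\omega_s)\cdot \mathbb{D}^{k_s}|_{S(\omega^{(i)})_{i\le s-1}}$. Since $0$ is the only point of $S(\omega^{(i)})_{i\le s-1}$ near the origin at which $\omega^s$ restricted to a limit tangent plane is linearly dependent (otherwise $0$ would not be an isolated special point of $\omega$), this intersection on $S(\omega^{(i)})_{i\le s-1}$ is isolated at $0$, and applying conservation of intersection numbers a second time to the deformation $\omega^s\rightsquigarrow\omega^s+tM$ realizes this intersection number as the number of transverse points just counted, i.e.\ the degree of $p$. Combining the two conservation statements yields the claimed identity. The main technical point is the assertion that no spurious intersection points appear outside $S(\omega^{(i)})_{i\le s-1}$ and none escape into the components of $\mathbb{D}_{X}^{k_s}|_{S(\omega)}$ that do not surject onto $S(\omega^{(i)})_{i\le s-1}$; this is the content of the moving lemma together with the genericity of $M$ (choosing $M$ outside the discriminant $\Delta(p)$ and outside analogous discriminants for the already-proper subcollections $\{\omega^{(i)}\}_{i\le r}$, as noted in the remark following the proposition).
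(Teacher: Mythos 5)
Your proposal is correct and follows essentially the same route as the paper: both arguments rest on conservation of the intersection number under the deformation $\omega^s\rightsquigarrow\omega^s+tM$, identify the split-off points with the transverse intersection points counted by the degree of the projection $C\to M(d-k_s+1,n)$ via Proposition~4.10, and leave the residual contribution at the origin as $Im(\Gamma_{\widetilde\omega})\cdot\mathbb{D}_X^k$. You simply spell out in more detail the identification of the moved-away points with points of $S(\omega^{(i)})_{i\le s-1}$, which the paper leaves implicit.
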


\begin{proof} The effect of moving $\{\omega^{(s)}\} $ is to split off points from the intersection $Im(\Gamma_{\omega})\cdot \mathbb{D}_{X}^{k}$. The first intersection number on the right is the degree of the projection from $C$ to $M(d-k_{s}+1,n)$, and   this is the number of points split off from the intersection number on the left hand side of the equation. Moving $\{\omega^{(s)}\}$  ensures that the intersection  $\Gamma(\omega_s+tM)\cdot \mathbb{D}^{k_{s}}|_{S(\omega^{(i)})_{i\leq s-1}}$ is void at the origin, ie the intersection point at the origin has split into non-degenerate points. The second term on the right is the remaining points at the origin.
\end{proof}
\begin{cor} In the set-up of last proposition, suppose in addition that the collection $\{ \omega^{(i)}\}$, $1\le i\le s-1$ is proper. Then
$$Im(\Gamma_{\omega})\cdot \mathbb{D}_{X}^{k}=
\Gamma(\omega_s)\cdot \mathbb{D}^{k_{s}}|_{S(\omega^{(i)})_{i\leq s-1}}.$$
\end{cor}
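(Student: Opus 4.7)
The plan is to start from the preceding Corollary and show that the remaining term vanishes. Corollary 4.11 (the ``moving'' corollary) gives us
$$Im(\Gamma_{\omega})\cdot \mathbb{D}_{X}^{k}= \Gamma(\omega_s)\cdot \mathbb{D}^{k_{s}}|_{S(\omega^{(i)})_{i\leq s-1}}+ Im(\Gamma_{\widetilde{\omega}})\cdot \mathbb{D}_{X}^{k},$$
so all that remains is to prove that the local intersection number $Im(\Gamma_{\widetilde{\omega}})\cdot \mathbb{D}_{X}^{k}$ at the origin is zero once the subcollection $\{\omega^{(i)}\}_{1\le i\le s-1}$ is proper. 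For this it suffices to show that after the generic perturbation used in Proposition 4.10 the origin is no longer a special point of the perturbed collection $\widetilde{\omega}=(\omega^{(1)},\dots,\omega^{(s-1)},\omega^{(s)}+tM)$.

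First I would verify that the set-up of the section applies to $\widetilde{\omega}$. The first $s-1$ elements are unchanged, hence still proper and still linearly independent at $0$; and by Remark 4.11 (and the freedom to pick $M$ in a Zariski-open set) we may in addition assume $\omega^{(s)}+tM$ is linearly independent at the origin and that $\widetilde{\omega}$ is proper on a deleted neighbourhood. In particular $S(\widetilde{\omega}^{(i)})_{i\le s-1}=S(\omega^{(i)})_{i\le s-1}$.

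Next I would argue by contradiction using Proposition 4.8. Assume the origin were an isolated special point of $\widetilde{\omega}$. Applying Proposition 4.8 to $\widetilde{\omega}$ we would obtain a curve $C\subset S(\widetilde{\omega}^{(i)})_{i\le s-1}=S(\omega^{(i)})_{i\le s-1}$, generic in $X_{\mathrm{reg}}$, and a limiting tangent plane $T$ at $0$ on which every element of $\widetilde{\omega}$ is linearly dependent. In particular $\omega^{(s)}+tM$ is linearly dependent on $T$, so $0$ lies in $\Gamma(\omega^{(s)}+tM)\cap \mathbb{D}^{k_s}_{S(\omega)}$. But this directly contradicts Proposition 4.10, which says that for generic $M$ and small $t\neq 0$ all the intersection points of $\Gamma(\omega^{(s)}+tM)$ with $\mathbb{D}^{k_s}_{S(\omega)}$ occur close to but not at the origin.

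Hence for generic $M$ and small $t\neq 0$ the origin is not a special point of $\widetilde{\omega}$, so no component of $Im(\Gamma_{\widetilde{\omega}})\cap \mathbb{D}_{X}^{k}$ passes through $0$; the local intersection number at the origin is therefore $0$, and the corollary follows from Corollary 4.11. The main delicate point I anticipate is keeping properness and linear independence at $0$ under the perturbation, so that Proposition 4.8 can be invoked for $\widetilde{\omega}$; once that is in hand the contradiction with the moving lemma is immediate.
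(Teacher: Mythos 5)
Your proposal is correct and follows essentially the same route as the paper: both start from the decomposition in the preceding corollary and then kill the residual term $Im(\Gamma_{\widetilde{\omega}})\cdot \mathbb{D}_{X}^{k}$ by observing that the perturbed collection $\{\omega^{(1)},\dots,\omega^{(s-1)},\widetilde{\omega^{(s)}}\}$ is proper (the paper states this in one line; you spell out the vanishing via Propositions 4.8 and 4.10, which is the argument implicitly behind that line). No gap.
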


\begin{proof} Since the collection  $\{ \omega^{(i)},\widetilde{\omega^{(s)}}\}$ , $1\le i\le s-1$ is proper, the intersection of
$\Gamma_{\{ \omega^{(i)},\widetilde{\omega^{(s)}}\}}$ and $ \mathbb{D}_{X}^{k}$ is empty.

\end{proof}

\begin{remark} If $X^{n-1}$ is a hypersurface and $\omega_{i}$ a collection of forms
with $(n-1)-k+1$ elements, which are linearly independent at the
origin, then it is easy to check if $\widetilde{X}(0)\cap B(\omega)$
has dimension  $(n-1)-k-1$.

Suppose $\dim \widetilde{X}(0)\cap B(\omega)\geq (n-1)-k$. To each
point in $B(\omega)$ there corresponds a unique point in $Proj(\omega)$,
the projectivized row space of $\omega_{i}$.

Note that points of $Proj(\omega)$ corresponding to points of
$\widetilde{X}(0)\cap B(\omega)$ are limiting tangent hyperplanes to
$X$ at the origin, so the set of points of $Proj(\omega)$ which are
limiting tangent hyperplanes has dimension $\geq (n-1)-k = dim
Proj(\omega)$ so every point is a limiting tangent hyperplane.

This is true if and only if $JM(f, \sum \alpha_{i} \omega_{i})$
fails to be a reduction of $JM(f)\oplus\mathcal{O}_{X}$ for all
$\alpha_{i}$. This can be checked using curves.
\end{remark}

\begin{remark}We continue with the hypersurface isolated singularity case.

Suppose $j+1$ collections $\{\omega^{i}\}$ $1\le i\le j+1$ are in
general position i.e. all are linearly independent at $0$ and ${\rm
dim} \cap Proj(\{\omega^{i}\})$ is $(n-1)-\sum_i k_{i}$. Suppose the
properness condition holds for the first $j$ elements but fails for
the collection. A dimension count shows that a whole component of
$\cap Proj\{\omega^{i}\}$ must lie in the fiber of the Nash
modification over the origin.
 Again
this is easy to check.
\end{remark}

\section{Computing Chern Numbers}

In this section we will apply the Multiplicity Polar Theorem to
relate the number of non-degenerate special points of a collection
of 1-forms with the Chern number of this collection. Ebeling and
Gusein-Zade proved this next proposition.

\begin{Prop}\cite{EG} Let $X$ be a representative of the germ of a
complex analytic space, then the local Chern obstruction
$Ch_{X,0}\{ \omega^{(i)}_{j}\}$ of a collection
$\{ \omega^{(i)}_{j}\}$ of germs of holomorphic 1-forms is equal to
the number of special points on X of a generic  deformation of the
collection.
\end{Prop}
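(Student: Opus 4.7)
The plan is to run a standard obstruction-theory argument, using that the right-hand side counts transverse intersections and that the left-hand side is, by definition, the primary obstruction class evaluated on the fundamental class. First I would unpack the definition: $Ch_{X,0}\{\omega^{(i)}_j\}$ is the obstruction to extending the section $\Gamma(\omega)$ of $\widetilde{\mathbb{T}}\setminus\mathbb{D}\to\widetilde X$ from a neighbourhood of $\nu^{-1}(X\cap S_\varepsilon)$ over all of $\nu^{-1}(X\cap B_\varepsilon)$. Since the fibre $W_{m,\textbf{k}}$ is $(2k-2)$-connected with $H_{2k-1}(W_{m,\textbf{k}})\cong\mathbb Z$, and since we are in the case $\sum k_i=k=d$, this primary obstruction lives in the top-dimensional relative cohomology group and is Poincar\'e-dual to a $0$-cycle; hence it is computable as a sum of local contributions at isolated transverse zeroes of a generic section.

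Next I would produce a suitable generic perturbation $\widetilde\omega$ of the collection. Using Proposition 4.10 (or a direct Sard-type argument) one can choose a one-parameter deformation $\omega_t$ with $\omega_0=\omega$ so that for small $t\ne 0$ the perturbed collection $\widetilde\omega=\omega_t$ has only non-degenerate isolated special points in $X\cap B_\varepsilon$, each lying over a point of $X_{\mathrm{reg}}$. Because $\omega$ has an \emph{isolated} special point at the origin, the section $\Gamma(\omega)$ already lies in $\widetilde{\mathbb{T}}\setminus\mathbb D$ over a neighbourhood of the boundary sphere, so $\Gamma(\omega)$ and $\Gamma(\widetilde\omega)$ can be joined by a homotopy inside $\widetilde{\mathbb T}\setminus\mathbb D$ over that neighbourhood. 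Homotopy invariance of the primary obstruction on the boundary then identifies $Ch_{X,0}\{\omega^{(i)}_j\}$ with the obstruction class of $\Gamma(\widetilde\omega)$.

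The final step is to compute this obstruction as a sum of local intersection numbers at the transverse zeroes. At each non-degenerate special point the local obstruction equals the local intersection number of $\mathrm{Im}(\Gamma(\widetilde\omega))$ with $\mathbb D$ inside the total space of $\widetilde{\mathbb T}$. Since $\widetilde X$, $\mathbb D$ and $\Gamma(\widetilde\omega)$ are all complex analytic and the intersection is transverse, each local intersection number is $+1$ by the positivity of complex intersections. Summing over the non-degenerate special points of $\widetilde\omega$ gives the count on the right-hand side, completing the proof.

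The main obstacle is the construction of the perturbation: one must arrange transversality of $\Gamma(\widetilde\omega)$ to $\mathbb D$ on a space $\widetilde X$ that is itself generally singular over $S(X)$, while keeping the section fixed near the boundary so that the homotopy argument applies. This forces one to interpret transversality stratum-wise on $\widetilde X$ and to invoke the properness conditions of Section 4 to ensure that every special point of the perturbation lies over $X_{\mathrm{reg}}$, where the Nash fibre is a single point and the identification between special points on $X$ and transverse intersections on $\widetilde X$ is one-to-one. Once this is in place the complex orientation handles the sign count automatically and the result follows.
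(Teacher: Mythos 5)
This proposition is stated in the paper with an attribution to Ebeling and Gusein-Zade \cite{EG}; the paper supplies no proof of its own, so there is nothing internal to compare your argument against. What you have written is essentially the standard obstruction-theoretic argument that the cited source uses: homotopy invariance of the primary obstruction over a neighbourhood of $\nu^{-1}(X\cap S_{\varepsilon})$, reduction to a generic section whose intersection with $\mathbb{D}$ is transverse and lies over $X_{\mathrm{reg}}$ (so that the Nash fibre is a single point and special points on $X$ correspond bijectively to intersection points on $\widetilde X$), and positivity of transverse complex intersections to get local contribution $+1$ at each such point. The only place where you lean on machinery that this paper does not actually provide is the genericity step: Proposition 4.10 here (the moving lemma) deforms only the last collection $\omega^{(s)}$ and yields non-degenerate points of the restricted problem on $\mathcal{C}$, whereas the present statement needs a generic deformation of the whole collection with all special points non-degenerate and lying over $X_{\mathrm{reg}}$. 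That is supplied by a direct transversality and dimension-count argument on the space of collections $\mathcal{L}^{k}$ (as in \cite{EG2}), which you gesture at and which does go through; with that understood, your outline is correct and matches the original proof.
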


If $X$ is defined by $F : \mathbb{C}^{n} \to \mathbb{C}^{p},$ then
the Jacobian module of $X$ denoted $JM(X),$ is the submodule of
$\mathcal{O}^{p}_{X}$ generated by the partial derivatives of $F.$
Given a collection of $1$-forms with $r$ elements defined on $X$,
form the $p+ r$ by $n$ matrix $D(F, \omega)$ by augmenting the
Jacobian matrix $DF$ at the bottom with the $1$-forms from the
collection. Call the submodule of the free module
$\mathcal{O}^{p+r}_{X},$ generated by the columns of $\left(
  \begin{array}{c}
    D(F) \\
    \omega \\
  \end{array}
\right)$, the augmented Jacobian module and denote it by $JM(X,
\omega).$

Note that this construction works in general. Given a submodule $M$ of a free module $F$, one can select a matrix of generators, and augment the matrix using linear forms. The points at which the new matrix has less than maximal rank is independent of the choice of generators of $M$ as the row space doesn't change.

In the next lemma, we begin to relate the theory of integral closure and the infinitesimal limiting geometry of our sets of forms.

\begin{lem} Let $X$ be a representative of the germ of a
complex analytic space, and let $\mathcal{L}= \{l_{1},
l_{2},\cdots,l_{s}\}$ be a collection of linear forms.  Consider the hyperplanes
defined by the  forms $\sum a_{i}l_{i}$. None of these hyperplanes is a limiting
tangent hyperplane to $X,0$ at the origin if and only if
$\overline{JM(X)_{p}}= \overline{JM(X)}$, where p is a submersion
whose kernel is the intersection of the kernels of $l_{1}, \cdots,l_{s}$.

\noindent (Here $JM(X)_{p}$ is the submodule of $JM(X)$ generated by
$\frac{\partial}{\partial v_{i}}f$ where the $v_{i}$ span the  kernel of $p$.)

\end{lem}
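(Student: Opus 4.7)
The plan is to recast both sides of the equivalence as statements about analytic arcs through the origin, connecting them via the Nash modification/conormal geometry of $X$. First I would set $V:=\ker p=\bigcap_i\ker l_i$ and note that, since $\{l_1,\ldots,l_s\}$ spans the annihilator of $V$, the hyperplanes $\ker(\sum a_il_i)$ are exactly the hyperplanes of $\mathbb{C}^n$ containing $V$. A dimension count then shows the geometric condition ``no such hyperplane is a limiting tangent'' is equivalent to ``every limiting tangent plane $T$ of $X$ at $0$ satisfies $V+T=\mathbb{C}^n$'': if $V+T$ were proper, any hyperplane through it would contain $V$ and $T$ and so be a limiting tangent hyperplane through $V$; conversely, any limiting tangent hyperplane $H\supset V$ contains some limiting $T$, forcing $V+T\subset H\ne\mathbb{C}^n$. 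Parallel to this, the curve criterion stated after Definition~2.1 reduces $\overline{JM(X)_p}=\overline{JM(X)}$ to the equality $\phi^{*}JM(X)=\phi^{*}JM(X)_p$ of $\mathcal{O}_1$-submodules of $\mathcal{O}_1^p$ for every analytic arc $\phi\colon(\mathbb{C},0)\to(X,0)$ with $\phi((0,\varepsilon))\subset X_{\mathrm{reg}}$.

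For the $(\Rightarrow)$ direction, given such a $\phi$, I would lift it to the Nash modification to extract the limiting tangent $T=\lim_{t\to 0}T_{\phi(t)}X$. By hypothesis $V+T=\mathbb{C}^n$; lower semicontinuity of the rank of the sum map $V\oplus T_{\phi(t)}X\to\mathbb{C}^n$ forces $V+T_{\phi(t)}X=\mathbb{C}^n$ on a whole neighborhood of $0$, so $V\cap T_{\phi(t)}X$ has constant dimension $d-s$ and forms an analytic subbundle. Choosing an analytic complement $C(t)\subset V$ to $V\cap T_{\phi(t)}X$ yields an analytic decomposition $C(t)\oplus T_{\phi(t)}X=\mathbb{C}^n$. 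Splitting a fixed basis $w_1,\ldots,w_s$ of a complement $V'$ of $V$ as $w_k=c_k(t)+u_k(t)$ with $c_k(t)\in C(t)\subset V$ and $u_k(t)\in T_{\phi(t)}X$, we get $DF(\phi(t))w_k=DF(\phi(t))c_k(t)$ since $u_k(t)$ is annihilated. Expanding $c_k(t)$ in a fixed basis $v_1,\ldots,v_{n-s}$ of $V$ exhibits each extra column as an $\mathcal{O}_1$-combination of $DF(\phi(t))v_i$, so $\phi^{*}JM(X)=\phi^{*}JM(X)_p$.

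For the $(\Leftarrow)$ direction I would argue by contrapositive. If $H=\ker\omega$ with $\omega=\sum a_il_i$ is a limiting tangent hyperplane containing $V$, the projectivized conormal $C(X)$ contains $(0,[\omega])$, so the Curve Selection Lemma produces an analytic arc $t\mapsto(x(t),[\eta(t)])\in C(X)$ with $x(t)\in X_{\mathrm{reg}}$ for $t\ne 0$, $x(0)=0$, and $\eta(t)=\lambda(t)\omega+o(\lambda(t))$. Writing $\eta(t)=y(t)^{\top}DF(x(t))$ for an analytic $y(t)\in\mathbb{C}^p$, pick $w\in V'$ with $\omega(w)\ne 0$ (possible, since otherwise $H\supset V+V'=\mathbb{C}^n$). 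Setting $\phi:=x$ and $N:=\mathrm{ord}_t\lambda$, the scalar $\eta(t)(w)=\lambda(t)\omega(w)+o(\lambda(t))$ has order exactly $N$, while $\eta(t)(v)=o(\lambda(t))$ has order strictly greater than $N$ for every $v\in V$ since $\omega(v)=0$. A hypothetical relation $DF(x(t))w=\sum c_i(t)DF(x(t))v_i$ with $c_i\in\mathcal{O}_1$, dotted with $y(t)$, yields $\eta(t)(w)=\sum c_i(t)\eta(t)(v_i)$; the right side has order $\ge N+1$, a contradiction. Hence the pullback modules along $\phi$ differ, so $\overline{JM(X)_p}\ne\overline{JM(X)}$.

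The main obstacle is the analytic-splitting step in $(\Rightarrow)$: one must verify that the family $T_{\phi(t)}X$ extends analytically across $t=0$ (which is exactly the content of the Nash lift) and, more delicately, that $V\cap T_{\phi(t)}X$ has locally constant dimension so as to be a genuine analytic subbundle rather than a variable-dimensional analytic subset. Once this bundle-theoretic input is secured, both directions reduce to linear algebra and a careful bookkeeping of $t$-adic orders along the selected curve.
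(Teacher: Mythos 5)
Your proof is correct, but it routes both implications differently from the paper. For the implication ``reduction $\Rightarrow$ no limiting tangent hyperplane through $V$'' the paper simply observes that $JM(X)_p\subset JM(X)_h\subset JM(X)$ for each $h=\sum a_il_i$ (since $\ker h\supset\ker p$), so $JM(X)_h$ is also a reduction, and then invokes the already-known single-hyperplane criterion; you instead prove the contrapositive from scratch by curve selection on the conormal through $(0,[\omega])$ and the order-of-vanishing comparison of $\eta(t)(w)$ against the $\eta(t)(v_i)$ --- in effect re-deriving the hyperplane case, with the extra observation that the witnessing direction $w$ may be chosen outside $V$. For the converse the paper argues by contrapositive: the valuative characterization of non-reduction supplies a curve $\varphi$ and a covector $l$ with an order gap, and normalizing the row $l\cdot DF\circ\varphi$ by $t^{-k}$ produces a limiting tangent hyperplane containing $\ker p$; you argue directly, propagating the transversality $V+T=\mathbb{C}^n$ along the Nash lift of each test curve to get an analytic splitting that writes the omitted directional derivatives as $\mathcal{O}_1$-combinations of the generators of $JM(X)_p$. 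Both arguments exploit the same duality --- conormal vectors along a curve are rows $y^{\top}DF$, and integral dependence is read off from $t$-adic orders of these pairings --- but run it in opposite logical directions. The paper's version is shorter because it borrows the (curve, covector) witness for non-reduction and the $s=1$ case from earlier work; yours is self-contained and makes the geometric mechanism (transversality of $V$ to every limiting tangent plane) explicit, at the price of the constant-rank/analytic-subbundle verification, which you correctly flag as the delicate point and which does go through via lower semicontinuity of $\dim(V+T_{\phi(t)}X)$ together with the Nash lift.
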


\begin{proof}

Let us prove this result in the special case when $p$ is a linear
projection on the last $s$ variables.

If $JM(X)_{p}$ is a reduction of $JM(X)$, then so is $JM(X)_{h}$,
where $h= \sum a_{i}l_{i}$ because $ker(h) \supset ker(p)$. Hence, the hyperplane defined by $h$ is not a limiting tangent hyperplane.

Let us prove now that if $\overline{JM(X)_{h}}= \overline{JM(X)}$
for all $h$ then $\overline{JM(X)_{p}}= \overline{JM(X)}$.

Let $K= Ker(p)$, we will show $JM(X)_{p} \subset \mathcal{O}_{X}^{k}$ is a
reduction of $JM(X) \subset \mathcal{O}_{X}^{k}$ if every hyperplane
that contains $K$ is not a limiting tangent hyperplane.

Suppose $JM(X)_{p}$ is not a reduction, this implies that exist
$\varphi: \mathbb{C} \to X,0$ and non-zero $l:\mathbb{C}^{n}\to
\mathbb{C}$, such that if N is matrix of generators of $JM(X)_{p}$ and $M$ is the
matrix of generators of $JM(X)$, then the ideal generated by the components of
$(l\cdot N)\circ \varphi(t)$  has larger order than the ideal generated by the components of $l \cdot M \circ \varphi$. Denote the order of $l \cdot M \circ \varphi$ by $k$. Then

$$\lim 1/t^{k} <m_{1}(t), \cdots, m_{n}(t)> $$ defines a limiting tangent hyperplane. Since
$m_{1}=\eta_{1}, \cdots, m_{p}=\eta_{p}$, and the order of these terms is greater than $k$, it follows that  T is a limiting tangent hyperplane which contains the
kernel of $p$.

\end{proof}

Given a collection of linear forms   $\mathcal{L}= \{l_{1}, l_{2},\cdots,l_{s}\}$, we let $JM(X,\mathcal{L})$ denote the module whose matrix of generators is gotten by adding as rows $ \{l_{1}, l_{2},\cdots,l_{s}\}$ to the jacobian matrix of a set of generators of $I(X)$. In a similar way, let $(M,\mathcal{L})$ denote the module whose matrix of generators is gotten by adding as rows the $ \{l_{1}, l_{2},\cdots,l_{s}\}$ to a matrix of generators of $M$.

\begin{Prop} Let $\mathcal{L}= \{l_{1}, l_{2},\cdots,l_{s}\}$ be a collection of linear
1-forms linearly independent at the origin. Consider the hyperplanes
defined by the  forms $\sum a_{i}l_{i}$. None of these hyperplanes is a limiting
tangent hyperplane to $X,0$ at the origin if and only if
$\overline{JM(X,\mathcal{L})}= \overline{JM(X)}\oplus
\mathcal{O}_{{X}}^{s}$.

\end{Prop}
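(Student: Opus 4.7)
The plan is to reduce to Lemma 5.2 by a coordinate change and then bridge the two integral closure conditions via the curve criterion. Since the $l_i$ are linearly independent at the origin, I extend them to linear coordinates $x_1, \ldots, x_n$ on $\mathbb{C}^n$ in which $l_i = x_i$ for $i \leq s$; let $p\colon \mathbb{C}^n \to \mathbb{C}^s$ be the projection on the first $s$ coordinates, so $\ker(p) = \bigcap_i \ker(l_i)$. In these coordinates the generators of $JM(X,\mathcal{L})$ are the columns
\[
\left(\tfrac{\partial F}{\partial x_j},\, e_j\right)\ \text{for } j\leq s,\qquad \left(\tfrac{\partial F}{\partial x_j},\, 0\right)\ \text{for } j>s,
\]
with $e_j$ the standard basis of $\mathcal{O}_X^s$. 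By Lemma 5.2, no hyperplane $\sum a_i l_i = 0$ is a limiting tangent hyperplane iff $\overline{JM(X)_p} = \overline{JM(X)}$, where $JM(X)_p$ is generated by the $\partial F/\partial x_j$ for $j>s$. So it suffices to prove that this last condition is equivalent to $\overline{JM(X,\mathcal{L})} = \overline{JM(X)} \oplus \mathcal{O}_X^s$.

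One containment is automatic: reading off the generators, $JM(X,\mathcal{L}) \subset JM(X) \oplus \mathcal{O}_X^s$, hence $\overline{JM(X,\mathcal{L})} \subset \overline{JM(X)}\oplus \mathcal{O}_X^s$. So only the reverse containment is in play.

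For the ``if'' direction, assume $\overline{JM(X)_p} = \overline{JM(X)}$. Then for each $j\leq s$ and each analytic curve $\phi\colon (\mathbb{C},0) \to (X,0)$, $\tfrac{\partial F}{\partial x_j}\circ\phi$ lies in $(\phi^* JM(X)_p)\mathcal{O}_1$, so one may write $\tfrac{\partial F}{\partial x_j}\circ\phi = \sum_{i>s} f_i(t)\,\tfrac{\partial F}{\partial x_i}\circ\phi$. Pairing this with the zero second component, which matches the second component of the generators with $i>s$, gives $(\tfrac{\partial F}{\partial x_j},0) \in \overline{JM(X,\mathcal{L})}$. Subtracting from the generator $(\tfrac{\partial F}{\partial x_j}, e_j)$ yields $(0,e_j) \in \overline{JM(X,\mathcal{L})}$. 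Together the two types put $\overline{JM(X)}\oplus \mathcal{O}_X^s$ inside $\overline{JM(X,\mathcal{L})}$.

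For the ``only if'' direction, assume the module equality and fix $j\leq s$. Since $(\tfrac{\partial F}{\partial x_j},0)$ lies in $\overline{JM(X)}\oplus \mathcal{O}_X^s = \overline{JM(X,\mathcal{L})}$, for any curve $\phi$ one can write
\[
\left(\tfrac{\partial F}{\partial x_j}\circ\phi,\ 0\right)=\sum_{i\leq s} f_i(t)\left(\tfrac{\partial F}{\partial x_i}\circ\phi,\ e_i\right)+\sum_{i>s} f_i(t)\left(\tfrac{\partial F}{\partial x_i}\circ\phi,\ 0\right).
\]
The last $s$ components force $\sum_{i\leq s} f_i(t) e_i = 0$, hence $f_i = 0$ for $i \leq s$, and the remaining identity says $\tfrac{\partial F}{\partial x_j}\circ\phi\in (\phi^* JM(X)_p)\mathcal{O}_1$. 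Therefore $\tfrac{\partial F}{\partial x_j} \in \overline{JM(X)_p}$, and since the remaining generators of $JM(X)$ already lie in $JM(X)_p$, we conclude $\overline{JM(X)} \subset \overline{JM(X)_p}$; the reverse inclusion is trivial. Combined with Lemma 5.2 this finishes both directions. The main thing to be careful about is the bookkeeping of the two components under the curve criterion — the second component is what forces the $f_i$ with $i\leq s$ to vanish and thus connects the two integral closure conditions.
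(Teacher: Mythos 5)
Your proof is correct and follows essentially the same route as the paper: reduce to Lemma 5.2, then establish the equivalence of the two reduction statements by exploiting the block structure of the generators of $JM(X,\mathcal{L})$ along curves (your adapted coordinates $l_i=x_i$ play exactly the role of the paper's dual vectors $v_i$ with $l_i(v_j)=\delta_{ij}$). The only difference is that you spell out the curve-criterion bookkeeping that the paper compresses into ``restricting to curves.''
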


\begin{proof} It suffices to show that $JM(X,\mathcal{L})$ is a
reduction of $JM(X)\oplus\mathcal{O}_{{X}}^{s}$ if and only if
$JM(X)_{p}$ is a reduction of $JM(X)$. Suppose $JM(X,\mathcal{L})$
is a reduction of $JM(X)\oplus\mathcal{O}_{{X}}^{s}$, then
$\overline{JM(X,\mathcal{L})}$ contains $JM(X)\oplus0$. Restricting
to curves, this implies $\overline{JM(X)_{p}}$ contains $JM(X)$.

Suppose $JM(X)_{p}$ is a reduction of $JM(X)$. Then
$\overline{JM(X,\mathcal{L})}$ contains $JM(X)\oplus0$. Let $\{v_i\}$ be a collection of vectors such that $l_i(v_j)=\delta_{i,j}$, then $\overline{JM(X,\mathcal{L})}$
contains $\left(
  \begin{array}{c}
    D(F)(v_i) \\
    \mathcal{L} (v_i)\\
  \end{array}
\right)$ and $JM(X)\oplus0$, so it contains
$JM(X)\oplus\mathcal{O}_{{X}}^{s}$\end{proof}

The previous two propositions can be easily generalized using the same proof. Given $M$ a submodule of a free module $F$, $\Projan \mathcal{R}(M)$ has a canonical projection to $X$ which is a fibration over the Z-open subset $U_M$ of $X$ on which $M$ has maximal rank. The fiber of this map consists of hyperplanes. Call the planes in the fibers over $U_M$, $M$-planes. The planes in the fibers over $C(M)$ then, are limiting $M$-planes. Then the analogues of the previous two results are:

\begin{Prop} Let $X$ be a representative of the germ of a
complex analytic space, and let $\mathcal{L}= \{l_{1},
l_{2},\cdots,l_{s}\}$ be a collection of linear forms linearly independent at the origin.  Consider the hyperplanes
defined by the  forms $\sum a_{i}l_{i}$. None of these hyperplanes is a limiting
$M$-hyperplane to $X,0$ at the origin if and only if
$$\overline{M_{p}}= \overline{M},$$where p is a submersion
whose kernel is the intersection of the kernels of $l_{1}, \cdots,l_{s}$ if and only if

$$\overline{(M,\mathcal{L})}= \overline{M}\oplus
\mathcal{O}_{{X}}^{s}.$$
\end{Prop}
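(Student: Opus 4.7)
The proof generalizes the arguments already given for Lemma 5.2 and Proposition 5.3, with $JM(X)$ replaced by an arbitrary submodule $M$, as the authors indicate in the remark just before the proposition. The plan is to establish two separate equivalences. First, no hyperplane $\sum a_i l_i = 0$ is a limiting $M$-hyperplane at the origin if and only if $\overline{M_p} = \overline{M}$. Second, $\overline{M_p} = \overline{M}$ if and only if $\overline{(M,\mathcal{L})} = \overline{M} \oplus \mathcal{O}_X^s$. Here $M_p$ is understood exactly as $JM(X)_p$ was in Lemma 5.2: one fixes a matrix of generators of $M$ whose columns are indexed by a basis of $\mathbb{C}^n$, and $M_p$ is the submodule generated by those columns indexed by a basis of $\ker p$.

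For the first equivalence, I would use the interpretation of $\Projan \mathcal{R}(M)$ as the closure of projectivised row spaces at maximal-rank points, which identifies the limiting $M$-hyperplanes over $0$ with the fiber of that projection away from the regular locus, so they can be detected along curves $\varphi\colon(\mathbb{C},0)\to(X,0)$ exactly as in the last paragraph of the proof of Lemma 5.2. If $\overline{M_p}=\overline{M}$, then for every $h=\sum a_i l_i$ one has $\ker p\subset\ker h$, hence $M_p\subseteq M_h$ and so $\overline{M_h}=\overline{M}$; this says that $\{h=0\}$ is not a limiting $M$-hyperplane. Conversely, any failure of $\overline{M_p}=\overline{M}$ produces, via a test curve and a suitable rescaling, a linear form $l$ whose zero locus is a limiting $M$-hyperplane containing $\ker p$, and hence is automatically of the form $\sum a_i l_i=0$.

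For the second equivalence, I would copy the proof of Proposition 5.3. Assuming $\overline{M_p}=\overline{M}$, testing on curves gives $\overline{M}\oplus 0\subseteq \overline{(M,\mathcal{L})}$. Choose vectors $\{v_i\}$ dual to $\{l_i\}$, so that $l_i(v_j)=\delta_{ij}$. The augmented generator evaluated at $v_i$ is the column $\binom{M(v_i)}{e_i}$; subtracting $\binom{M(v_i)}{0}$, which already lies in $\overline{(M,\mathcal{L})}$, yields $\binom{0}{e_i}\in\overline{(M,\mathcal{L})}$. Hence $\overline{M}\oplus\mathcal{O}_X^s\subseteq\overline{(M,\mathcal{L})}$; the reverse inclusion is immediate from the definition of $(M,\mathcal{L})$. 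The opposite implication is obtained by projecting an integral dependence relation for $(M,\mathcal{L})$ onto the $M$-factor and again testing on curves.

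The only genuine obstacle is pinning down the notation $M_p$ in this generality, since an abstract submodule $M\subset F$ does not come equipped with a canonical indexing of its generators by coordinate directions of $\mathbb{C}^n$. Once such an indexing is fixed, however, the arguments above are literally those used for $M=JM(X)$, which is exactly what the paragraph preceding the proposition asserts.
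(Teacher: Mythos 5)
Your proposal is correct and follows exactly the route the paper intends: the paper offers no separate argument for this proposition, stating only that the previous two results (Lemma 5.2 and Proposition 5.3) ``can be easily generalized using the same proof,'' and your write-up carries out precisely that generalization, including the curve-testing argument for the first equivalence and the dual vectors $v_i$ with $l_i(v_j)=\delta_{ij}$ for the second. Your remark about needing to fix a matrix of generators of $M$ indexed by the coordinates of $\mathbb{C}^n$ in order to make sense of $M_p$ is a fair and accurate observation about a point the paper leaves implicit.
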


Let $\{\omega_{j}^{(i)}\}$ be a collection of $1$-forms on the
variety $X$, for simplicity we  will denote $S(\omega_{j}^{(i)})$ with
$1\leq i \leq s-1$ by $\mathcal{C}$. In the next proposition we are interested in characterizing those collections for which the origin is not a special point or restricted special point.

\begin{Prop} Let $(X,0)$ be the germ of an equidimensional reduced
analytic variety, $X$ a representative of the germ and $\{
\omega^{(i)}_{j}\}$ a collection of 1-forms; assume the generic point of each component of ${\mathcal{C}}$ lies in $X_{reg}$. Assume also the last collection  $\{
\omega^{(s)}_{j}\}$ is linearly independent at $0$. The origin is not a
restricted special point of the collection $\{ \omega^{(i)}_{j}\}$ iff
$JM(X,\omega^{(s)})\vert_{\mathcal{C}}$ is a reduction of
$JM(X)\vert_{\mathcal{C}}\oplus
\mathcal{O}_{\mathcal{C}}^{d-k_{s}+1}$.

If all of the collections are linearly independent at the origin, and we assume  the first $s-1$ elements are proper, then origin is not a special point of the collection $\{ \omega^{(i)}_{j}\}$ iff
$JM(X,\omega^{(s)})\vert_{\mathcal{C}}$ is a reduction of
$JM(X)\vert_{\mathcal{C}}\oplus
\mathcal{O}_{\mathcal{C}}^{d-k_{s}+1}$.

\end{Prop}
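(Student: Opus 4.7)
The plan is to deduce the proposition from Proposition 5.3 applied to $M = JM(X)\vert_{\mathcal{C}}$ on $\mathcal{C}$ together with the collection of constant linear forms $\mathcal{L} = \{\omega^{(s)}_1(0),\ldots,\omega^{(s)}_{d-k_s+1}(0)\}$, which are linearly independent at $0$ by hypothesis. The key identification is that, tautologically, $JM(X,\omega^{(s)})\vert_{\mathcal{C}} = (JM(X)\vert_{\mathcal{C}},\omega^{(s)})$, so the reduction equality appearing in the proposition is exactly the augmented form of the conclusion of Proposition 5.3.

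First I would unpack the restricted special point condition. The origin is a restricted special point of the whole collection iff there is a sequence $p_m \to 0$ in $\mathcal{C}$ with $T_{p_m} X_{reg} \to L$ in $G(d,n)$ and each subcollection $\{\omega^{(i)}_j\}\vert_L$ linearly dependent. By the hypothesis that the generic point of each component of $\mathcal{C}$ lies in $X_{reg}$, I may arrange $p_m \in \mathcal{C}\cap X_{reg}$ and, by curve selection, replace the sequence by an analytic curve. Because $p_m \in S(\omega^{(i)})_{i\le s-1}\cap X_{reg}$, the subcollection $\{\omega^{(i)}_j(p_m)\}$ is already degenerate on $T_{p_m} X$ for each $i \le s-1$, and hence degenerate on $L$ by continuity. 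Thus only the last subcollection carries information: the origin is a restricted special point iff there exist such a curve and nonzero scalars $(a_j)$ with $L \subset \ker(\sum a_j \omega^{(s)}_j(0))$; equivalently, iff some hyperplane $\ker(\sum a_j \omega^{(s)}_j(0))$ occurs as a limit of tangent hyperplanes to $X$ along a curve in $\mathcal{C}$ generically in $X_{reg}$.

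Next I would match this with the $JM(X)\vert_{\mathcal{C}}$-hyperplane picture. At a smooth point $p \in \mathcal{C}\cap X_{reg}$, the rows of a matrix of generators of $JM(X)$ are the differentials $df_i(p)$, which span the conormal space to $X$ at $p$, so $\Projan \mathcal{R}(JM(X)\vert_{\mathcal{C}})$ parametrizes the tangent hyperplanes to $X$ at $p$. By the remark following Definition 2.1, the integral closure of $M$ can be tested along curves whose generic point lies in the maximal-rank locus of $M$, which here is $\mathcal{C}\cap X_{reg}$. Hence the limiting $JM(X)\vert_{\mathcal{C}}$-hyperplanes at $0$ are exactly the limits of tangent hyperplanes to $X$ along curves in $\mathcal{C}$ generically in $X_{reg}$. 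Applying Proposition 5.3 therefore translates the condition ``no $\ker(\sum a_j \omega^{(s)}_j(0))$ is a limiting $M$-hyperplane'' into the reduction
$$\overline{(JM(X)\vert_{\mathcal{C}},\omega^{(s)})} = \overline{JM(X)\vert_{\mathcal{C}}}\oplus \mathcal{O}_{\mathcal{C}}^{d-k_s+1},$$
which is the first half of the proposition.

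For the second half, the additional hypotheses (all subcollections linearly independent at $0$ and the first $s-1$ proper) place us in case 2 of Proposition 4.7, whose conclusion is that every isolated special point at $0$ is in fact a restricted special point; hence the two notions coincide and the first half immediately yields the equivalence for ``special point.'' The main obstacle is the matching step: verifying that limiting $JM(X)\vert_{\mathcal{C}}$-hyperplanes tested along arbitrary analytic curves can be realized by curves whose generic point lies in $\mathcal{C}\cap X_{reg}$, so that the $M$-hyperplane language of Proposition 5.3 truly captures the infinitesimal tangent-plane data defining restricted special points. This rests precisely on the hypothesis on $\mathcal{C}$ together with the curve-test characterization of integral closure; once that identification is in hand the proposition is an immediate consequence of Proposition 5.3.
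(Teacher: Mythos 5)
Your overall route is the paper's route: reduce to the preceding proposition on limiting $M$-hyperplanes with $M=JM(X)\vert_{\mathcal{C}}$, use the hypothesis that the generic point of each component of $\mathcal{C}$ lies in $X_{reg}$ to identify limiting $M$-hyperplanes with limits of tangent hyperplanes to $X$ along curves in $\mathcal{C}$, and invoke properness together with Proposition 4.7 to pass from restricted special points to special points in the second half. Your unpacking of the restricted-special-point condition (only the last subcollection carries information, and degeneracy of $\{\omega^{(s)}_j(0)\}$ on a limit plane $L$ is equivalent to some hyperplane $\ker(\sum a_j\omega^{(s)}_j(0))$ arising as a limiting tangent hyperplane along $\mathcal{C}$) is correct and in fact spelled out more carefully than in the paper.

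There is, however, one genuine gap: the step you call ``tautological'' is not. Proposition 5.4 applies to a collection of \emph{linear} (constant-coefficient) forms $\mathcal{L}$, and with $\mathcal{L}=\{\omega^{(s)}_j(0)\}$ its conclusion concerns the module $(JM(X)\vert_{\mathcal{C}},\mathcal{L})=JM(X,\mathcal{L})\vert_{\mathcal{C}}$, not the module $JM(X,\omega^{(s)})\vert_{\mathcal{C}}$ appearing in the statement. The forms $\omega^{(s)}_j$ are holomorphic with non-constant coefficients in general, so the two augmented modules differ by rows whose entries vanish at the origin, and one must still prove that $JM(X,\omega^{(s)})\vert_{\mathcal{C}}$ is a reduction of $JM(X)\vert_{\mathcal{C}}\oplus\mathcal{O}_{\mathcal{C}}^{d-k_s+1}$ if and only if $JM(X,\mathcal{L})\vert_{\mathcal{C}}$ is. This is exactly where the paper invokes Lemma 3.3 of \cite{Gaff-AG} and the integral-closure form of Nakayama's lemma: a perturbation of the generators by elements lying in the maximal ideal times the ambient module does not change the reduction property. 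As written, your argument establishes the equivalence for the linearized module $JM(X,\mathcal{L})\vert_{\mathcal{C}}$ only; inserting this Nakayama step (or an equivalent curve-by-curve order comparison) closes the gap, after which the rest of your proof goes through. A cosmetic point: the proposition you want is the $M$-hyperplane version (Proposition 5.4), not Proposition 5.3, though your description makes clear which result you intend.
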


\begin{proof}

Let $\mathcal{L} = \{l_{1}, l_{2}, \cdots, l_{d-k_{s}+1}\}$ be a
collection of linear 1-forms such that $\omega^{s}_{i}(0)=l_{i}$, as
in Lemma 3.3 of \cite{Gaff-AG}, using the integral form of
Nakayama's lemma we have that $JM(X,\omega^{(s)})\vert_{\mathcal{C}}$
is a reduction of $JM(X)\vert_{\mathcal{C}}\oplus
\mathcal{O}_{\mathcal{C}}^{d-k_{s}+1}$ iff
$JM(X,\mathcal{L})\vert_{\mathcal{C}}$ is a reduction of
$JM(X)\vert_{\mathcal{C}}\oplus
\mathcal{O}_{\mathcal{C}}^{d-k_{s}+1}$. So we can work with
$\mathcal{L}$.

Now we apply the previous proposition, where $M=JM(X)$ restricted to ${\mathcal{C}}$. Then the limiting $M$-hyperplanes are just the tangent hyperplanes to $X$ as the generic point of each component of ${\mathcal{C}}$ is in $X_{reg}$. If some combination of the $\omega^{s}_{i}(0)=l_{i}$ are a limiting tangent hyperplane to $X$, then that combination is zero when restricted to the limiting tangent plane, and the collection is linearly dependent.

If we assume properness, then since every special point is a restricted special point, the result follows.

\end{proof}

We will need a refinement of this result for later. The key point in the above argument, is that  $JM(X,\omega^{(s)})\vert_{\mathcal{C}}$ is a reduction of
$JM(X)\vert_{\mathcal{C}}\oplus
\mathcal{O}_{\mathcal{C}}^{d-k_{s}+1}$ if and only if  none of the hyperplanes defined by $\{\omega^{(s)}(0)\}$ is a limiting
tangent hyperplane to $X,0$ at the origin along curves on  ${\mathcal{C}}$. Given the collection $\{\omega^{(s)}\}$ , we can deform it to $\{\omega^{(s)}(0)\}$ by using the linear deformation. This fixes the one jet of the collection. Denote this family of collections by $\{\omega^{(s)}_L\}$ . Denote the family of  sections defined by fixing the first $s-1$ collections and deforming the last using the linear deformation by $\Gamma_{\{\omega_{s-1,L}\}}$

\begin{Prop} Assume $JM(X,\omega^{(s)})\vert_{\mathcal{C}}$ is a reduction of
$JM(X)\vert_{\mathcal{C}}\oplus
\mathcal{O}_{\mathcal{C}}^{d-k_{s}+1}$. Then $Im(\Gamma_{\{\omega_{s-1,L}\}})\cdot \mathbb{D}_{X}^{k}$ is constant in the linear deformation.
\end{Prop}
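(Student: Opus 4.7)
The plan is to propagate the reduction hypothesis along the linear deformation and then invoke conservation of intersection number in a one-parameter family.

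First, I would argue that the reduction condition
$\overline{JM(X,\omega^{(s)})|_{\mathcal{C}}} = \overline{JM(X)|_{\mathcal{C}} \oplus \mathcal{O}_{\mathcal{C}}^{d-k_s+1}}$
depends only on the value $\omega^{(s)}(0)$ at the origin. This is exactly the opening step of the proof of Proposition 5.4 (via Lemma 3.3 of \cite{Gaff-AG}): the integral form of Nakayama's lemma shows that replacing $\omega^{(s)}$ by any collection $\omega'^{(s)}$ with the same value at the origin alters $JM(X,\omega^{(s)})|_{\mathcal{C}}$ by a term lying in $\mathfrak{m}_{\mathcal{C},0}\bigl(JM(X)|_{\mathcal{C}} \oplus \mathcal{O}_{\mathcal{C}}^{d-k_s+1}\bigr)$ and hence preserves the integral closure relative to $JM(X)|_{\mathcal{C}} \oplus \mathcal{O}_{\mathcal{C}}^{d-k_s+1}$. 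Since the linear deformation $\omega^{(s)}_{L,t} = (1-t)\omega^{(s)}(0) + t\omega^{(s)}$ has the same value at the origin for every $t \in [0,1]$, each member of the deformation satisfies the reduction hypothesis.

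Second, applying Proposition 5.4 to each $\omega^{(s)}_{L,t}$, the origin is not a restricted special point of the collection $\{\omega^{(1)}_j,\ldots,\omega^{(s-1)}_j,\omega^{(s)}_{L,t}\}$ for any $t$. Hence every section $\Gamma_{\omega_{s-1,L,t}}$ fails to meet $\mathbb{D}_X^k$ at the origin.

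Third, I would conclude by conservation of intersection number in the family. Consider the parametrized incidence variety $Z = \{(x,t) \in \overline{B_\epsilon} \times [0,1] : \Gamma_{\omega_{s-1,L,t}}(x) \in \mathbb{D}_X^k\}$. By the previous step, $Z$ is disjoint from $\{0\} \times [0,1]$. Combined with the isolated-special-point hypothesis from the setup of Section 4 at the endpoints and the analyticity of the family of sections, one can shrink $\epsilon$ uniformly to arrange that $Z$ is contained in the interior of $B_\epsilon \times [0,1]$, and hence proper over $[0,1]$. The degree of the projection $Z \to [0,1]$, counted with multiplicities, is then locally constant and hence constant on the connected parameter space, giving the desired independence of $t$ of $Im(\Gamma_{\omega_{s-1,L,t}}) \cdot \mathbb{D}_X^k$.

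The main obstacle is the properness of $Z$ over $[0,1]$ in the third step: one must ensure that no intersection points escape through $\partial B_\epsilon$ as $t$ varies. Step two uniformly excludes the origin, and the endpoints $t=0,1$ inherit isolatedness from the Section 4 setup via the Nakayama-based invariance of Step one (which shows the linearized collection $\omega^{(s)}(0)$ behaves at the origin like $\omega^{(s)}$). The delicate point is the uniform compactness argument for the intermediate $t$, which rests on continuity of the analytic family $\omega^{(s)}_{L,t}$ and compactness of $[0,1]$.
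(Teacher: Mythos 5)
Your first step is exactly the paper's: the reduction hypothesis depends only on $\omega^{(s)}(0)$, hence persists for every member of the linear family $\{\omega^{(s)}_L\}$. The gap is in your second step, at the word ``Hence.'' Proposition 5.4 gives you that the origin is not a \emph{restricted} special point for each $t$; it does not give you that $\Gamma_{\omega_{s-1,L,t}}$ misses $\mathbb{D}_X^k$ over the origin. The section meets $\mathbb{D}_X^k$ over $0$ precisely when $0$ is a special point, and a special point need not be a restricted one unless the first $s-1$ collections are proper --- a hypothesis Proposition 5.4 requires for that stronger conclusion and which the present proposition does not assume. Indeed, if your step two were correct, the term $Im(\Gamma_{\widetilde\omega})\cdot\mathbb{D}_X^k$ in Theorem 5.13 would always vanish, whereas the paper explicitly emphasizes (in the discussion preceding that theorem) that after making the last collection generic the intersection number at the origin may remain nonzero because the Nash fiber over $0$ is positive-dimensional. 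With step two gone, your step three only establishes constancy of the \emph{total} intersection number in a small ball --- which is ordinary conservation of number and would not need the reduction hypothesis at all --- rather than constancy of the local intersection number at the origin, which is what the statement asserts.

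The paper closes this gap differently. It argues by contradiction: if the local number at the origin changed at some parameter value $t_0$, a point would split off, producing a curve of intersection points in $\mathcal{C}$ approaching the origin, at each of which some member of $\{\omega^{(s)}_L\}$ is degenerate on a plane of the Nash modification over that point. Passing to the limit along this curve yields a plane over $0$, reached through a curve \emph{on} $\mathcal{C}$, on which $\omega^{(s)}_L(t_0)$ is degenerate --- and this is exactly what the (persisting) reduction of $JM(X,\omega^{(s)}_L(t_0))\vert_{\mathcal{C}}$ in $JM(X)\vert_{\mathcal{C}}\oplus\mathcal{O}_{\mathcal{C}}^{d-k_s+1}$ forbids. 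The crucial observation you are missing is that points which split off are automatically ``restricted'' (they approach $0$ along curves in $\mathcal{C}$), so the restricted condition controls the \emph{change} in the local intersection number even though it cannot control its value. To repair your argument, replace the claim that $Z$ avoids $\{0\}\times[0,1]$ by the claim that no branch of $Z\setminus(\{0\}\times[0,1])$ accumulates on $\{0\}\times[0,1]$, and derive that from the reduction hypothesis by the curve/limit argument above.
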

\begin{proof} Since $JM(X,\omega^{(s)})\vert_{\mathcal{C}}$ is a reduction of
$JM(X)\vert_{\mathcal{C}}\oplus
\mathcal{O}_{\mathcal{C}}^{d-k_{s}+1}$, the same is true for any member of the family $\{\omega^{(s)}_L\}$. Suppose for some parameter value $t_0$  that the intersection number changes, ie a point splits off. This gives a curve of points in ${\mathcal{C}}$, where at each point $p$, a member of $\{\omega^{(s)}_L\}$,  is degenerate when restricted to some plane which is a point over $p$ in the Nash modification. This implies that $\{\omega^{(s)}_L\}(t_0)$ is degenerate when restricted to some plane which is a point over $0$ in the Nash modification. As this plane can be reached through a curve on $C$, it contradicts  that $JM(X,\omega^{(s)}_L(t_0))\vert_{\mathcal{C}}$ is a reduction of
$JM(X)\vert_{\mathcal{C}}\oplus
\mathcal{O}_{\mathcal{C}}^{d-k_{s}+1}$.
\end{proof}

\begin{definition}
$H_{d-1}(X),$ by definition, consists of all elements of
$\mathcal{O}^{p}_{X}$ which are in the integral closure of $JM(X^d)$ except at the origin. A related idea is $H_{c-1}(X,C^c)$ where $C$ is a subset of $X$ of dimension $c$. It consists of all elements of $\mathcal{O}^{p}_{C}$ which are in the integral closure of  $JM(X)$ restricted to $C$ except at the origin.

In general, $H_{i}(X)$ consists of all elements of
$\mathcal{O}^{p}_{X}$ which are in the integral closure of $JM(X)$ off a set of codimension
$i + 1.$ Sometimes we write $H_{i}(JM(X)).$ The meaning of  $H_{i}(X,C^c)$ is similar.

\end{definition}
\begin{Prop} Let $(X,0)$ be the germ of an equidimensional reduced
analytic variety, $X$ a representative of the germ and $\{
\omega^{(i)}_{j}\}$ a collection of 1-forms; assume the generic point of each component of ${\mathcal{C}}$ lies in $X_{reg}$. Assume also the last collection  $\{
\omega^{(s)}_{j}\}$ is linearly independent at $0$. The origin is at most an isolated restricted special point of the collection
 if and only if  $JM(X, \mathcal{L})\vert_{\mathcal{C}}$ is a reduction
of $H_{c-1}(X,C)\vert_{\mathcal{C}}\oplus
\mathcal{O}_{\mathcal{C}}^{d-k_{s}+1}$ except possibly at x.

If in addition, the first $s-1$ collections are proper on a deleted neighborhood of the origin, then the origin is at most an isolated special point.
\end{Prop}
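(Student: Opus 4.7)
The plan is to deduce this proposition from Proposition 5.4 by applying that earlier result pointwise at each point of a deleted neighborhood of the origin, and then packaging the pointwise conditions as a single reduction statement using $H_{c-1}(X,C)$.

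First I would observe that, by definition of $H_{c-1}(X,C)$, at every point $x' \in \mathcal{C}$ other than the origin we have $\overline{H_{c-1}(X,C)}\vert_{\mathcal{C},x'} = \overline{JM(X)}\vert_{\mathcal{C},x'}$; the two modules differ only in how they behave at $0$. Likewise, the integral form of Nakayama's lemma (invoked in the proof of Proposition 5.4 via Lemma 3.3 of \cite{Gaff-AG}) together with the fact that $\omega^{(s)}_i(0)=l_i$ shows $\overline{JM(X,\mathcal{L})} = \overline{JM(X,\omega^{(s)})}$ at the origin, and hence, by shrinking the representative, at every point in a neighborhood. Combining these two equalities, the condition that $JM(X,\mathcal{L})\vert_{\mathcal{C}}$ be a reduction of $H_{c-1}(X,C)\vert_{\mathcal{C}} \oplus \mathcal{O}_{\mathcal{C}}^{d-k_s+1}$ except possibly at the origin is the same as the condition that $JM(X,\omega^{(s)})\vert_{\mathcal{C}}$ be a reduction of $JM(X)\vert_{\mathcal{C}} \oplus \mathcal{O}_{\mathcal{C}}^{d-k_s+1}$ at every point $x' \in \mathcal{C}$ with $x' \ne 0$, in a sufficiently small punctured neighborhood.

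Next I would apply Proposition 5.4 with the role of the distinguished base point played by an arbitrary $x' \ne 0$ (not $0$). Its hypotheses persist at $x'$: the generic point of each component of $\mathcal{C}$ is in $X_{\mathrm{reg}}$, and $\{\omega^{(s)}_j\}$ remains linearly independent near $0$. Proposition 5.4 then says the pointwise reduction at $x'$ is equivalent to $x'$ not being a restricted special point of the collection. Running this equivalence over all $x'$ in a deleted neighborhood of $0$, the reduction off $0$ is equivalent to the absence of restricted special points on a punctured neighborhood of $0$ — i.e., the origin being at most an isolated restricted special point. This establishes the first (iff) statement.

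For the addendum, I would invoke Proposition 4.8: under the assumption that the first $s-1$ collections are proper on a deleted neighborhood of the origin (and with each collection linearly independent at $0$ as here), every special point automatically arises as a restricted special point, since one can realize the offending limit plane as a limit of tangents to $X$ along a curve in $\mathcal{C}$. Consequently, in this enhanced setting ``isolated restricted special point at the origin'' is the same as ``isolated special point at the origin,'' and the reduction criterion from the first part transfers directly.

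The main obstacle is nothing deep but rather the careful bookkeeping: Proposition 5.4 is phrased at a single base point (the origin), whereas here we must apply it at a moving base point $x' \ne 0$; and the passage from $JM(X)$ to $H_{c-1}(X,C)$ is precisely the device that converts a pointwise equality-off-$0$ condition into one reduction statement. Verifying that $\overline{JM(X,\mathcal{L})} = \overline{JM(X,\omega^{(s)})}$ holds not just at $0$ but on a whole neighborhood — so that the linearisation step in the proof of 5.4 is available uniformly — is the only place where one should be slightly attentive.
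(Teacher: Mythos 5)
Your argument is essentially the paper's own proof: both apply the pointwise reduction criterion of the preceding proposition at each point of a punctured neighbourhood, use the definition of $H_{c-1}(X,C)$ (whose integral closure agrees with that of $JM(X)\vert_{\mathcal{C}}$ away from the origin) to package those pointwise conditions into a single reduction statement off $0$, and then invoke properness to identify restricted special points with special points for the addendum. The only caveat concerns your claim that $\overline{JM(X,\mathcal{L})}=\overline{JM(X,\omega^{(s)})}$ on a whole neighbourhood: the integral form of Nakayama's lemma only gives the equivalence of the two reduction conditions relative to $JM(X)\vert_{\mathcal{C}}\oplus\mathcal{O}_{\mathcal{C}}^{d-k_{s}+1}$ at the origin (where $\mathcal{L}=\omega^{(s)}(0)$), not an unconditional equality of integral closures of the two augmented modules at nearby points $x'\neq 0$ --- but the paper's own proof elides exactly the same point, asserting the reduction for $JM(X,\mathcal{L})$ at all $x\neq 0$ ``by the previous proposition,'' so this is a shared imprecision rather than a defect of your approach.
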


\begin{proof} Suppose the origin is an isolated restricted special point. Let $U$ be a neighborhood of $0$ in $X$ such that $x$ is the only restricted special point. Then by proposition 4.7,
$\Gamma(\omega^{s})$ misses $T^{*}(X)\vert_{\mathcal{C}}$ on $U
\setminus \{0\}$.

 Then $JM(X, \mathcal{L})\vert_{\mathcal{C}}$ is a
reduction of $JM(X)\vert_{\mathcal{C}}\oplus
\mathcal{O}_{\mathcal{C}}^{d-k_{s}+1}$ at all $x\in U$, $x\ne 0$ by the previous proposition.

Hence by definition it is a reduction of
$H_{c-1}(X, C)\vert_{\mathcal{C}}\oplus
\mathcal{O}_{\mathcal{C}}^{d-k_{s}+1}$ except possibly at $x$.

On the other hand assume the reduction criterion holds at each point of $U-0$. This implies  $JM(X, \mathcal{L})\vert_{\mathcal{C}}$ is a
reduction of $JM(X)\vert_{\mathcal{C}}\oplus
\mathcal{O}_{\mathcal{C}}^{d-k_{s}+1}$ as this last module is a submodule of $H_{c-1}(X,C)\vert_{\mathcal{C}}\oplus
\mathcal{O}_{\mathcal{C}}^{d-k_{s}+1}$. This implies that there are no restricted special points on $U$ except possibly the origin.

If in addition, the first $s-1$ collections are proper on a deleted neighborhood of the origin, then the lack of restricted special points on $U-0$ is equivalent to a lack of special points.

\end{proof}
The last proposition leaves open the question as to whether the origin is a restricted special point if the reduction criterion holds. The next proposition lets us settle this point.

\begin{Prop}\label{soma} Suppose the origin is at most an isolated restricted special point. Then $$e(JM(X, \omega^{s})\vert_{\mathcal{C}}, JM(X)\vert_{\mathcal{C}} \oplus \mathcal{O}_{\mathcal{C}}^{d-k_{s}+1}, 0)$$
$$= e(JM(X, \omega^{s})\vert_{\mathcal{C}},H_{c-1}(X,{\mathcal{C}}) \oplus \mathcal{O}_{\mathcal{C}}^{d-k_{s}+1}, 0)$$ $$ - e(JM(X,\mathcal{L})\vert_{\mathcal{C}},H_{c-1}(X,{\mathcal{C}})\oplus\mathcal{O}_{\mathcal{C}}^{d-k_{s}+1}, 0)$$ where
$\mathcal{L}$ is a collection of linear 1-forms such that $0$ is not
a restricted special point for it.

The origin is not a restricted special point if and only if
$$e(JM(X, \omega^{s})\vert_{\mathcal{C}},H_{c-1}(X,{\mathcal{C}}) \oplus \mathcal{O}_{\mathcal{C}}^{d-k_{s}+1}, 0)$$
$$=  e(JM(X,\mathcal{L})\vert_{\mathcal{C}},H_{c-1}(X,{\mathcal{C}})\oplus\mathcal{O}_{\mathcal{C}}^{d-k_{s}+1}, 0)$$
where
$\mathcal{L}$ is a collection of linear 1-forms such that $0$ is not
a restricted special point for it.

\end{Prop}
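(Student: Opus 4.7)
My plan is to prove both parts simultaneously by applying the additivity of the pair-multiplicity along a three-term chain, then swapping one term via invariance under integral closure. Set $A := JM(X,\omega^{(s)})\vert_{\mathcal{C}}$, $B_1 := JM(X)\vert_{\mathcal{C}} \oplus \mathcal{O}_{\mathcal{C}}^{d-k_{s}+1}$, $B_2 := JM(X,\mathcal{L})\vert_{\mathcal{C}}$, and $C := H_{c-1}(X,\mathcal{C}) \oplus \mathcal{O}_{\mathcal{C}}^{d-k_{s}+1}$. The inclusion $A \subseteq B_1$ holds by splitting each generating column $(D_iF,\omega^{(s)}(e_i))$ of $A$ into its two natural summands, one in $JM(X)\vert_{\mathcal{C}}\oplus 0$ and the other in $0\oplus \mathcal{O}_{\mathcal{C}}^{d-k_s+1}$; the inclusion $B_1 \subseteq C$ is immediate from the definition of $H_{c-1}(X,\mathcal{C})$.

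Before invoking additivity I would check that all four pair-multiplicities in play are well defined at $0$, which by Kleiman--Thorup amounts to checking that each pair has coincident integral closures on a punctured neighborhood of $0$. The hypothesis that $0$ is at most an isolated restricted special point, combined with Proposition~5.4, yields $\overline{A} = \overline{B_1}$ off $0$. The definition of $H_{c-1}(X,\mathcal{C})$ directly gives $\overline{B_1} = \overline{C}$ off $0$, and transitivity then furnishes $\overline{A}=\overline{C}$ off $0$. For $\mathcal{L}$, Proposition~5.4 together with the hypothesis that $0$ is not a restricted special point for $\mathcal{L}$ yields $\overline{B_2}=\overline{B_1}$ at $0$, hence on a full neighborhood.

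Now apply additivity of the pair-multiplicity along the chain $A\subseteq B_1\subseteq C$ to obtain $e(A,C) = e(A,B_1)+e(B_1,C)$. Since the pair-multiplicity depends on the smaller module only through its integral closure and $\overline{B_2}=\overline{B_1}$ near $0$, we have $e(B_1,C)=e(B_2,C)$. Substituting and rearranging gives Part~1. For Part~2, Proposition~5.4 says that $0$ is not a restricted special point if and only if $A$ is a reduction of $B_1$; by the Kleiman--Thorup criterion characterizing reductions as the vanishing locus of the pair-multiplicity, this is equivalent to $e(A,B_1)=0$, which by the Part~1 formula is equivalent to $e(A,C)=e(B_2,C)$.

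The only substantive obstacle is the bookkeeping in the second paragraph: arranging that each pair under consideration has coincident integral closures off $0$ so that the multiplicities are defined and additivity legitimately applies. This is precisely what the construction of $H_{c-1}(X,\mathcal{C})$ is engineered to accomplish, and why the hypothesis is phrased as \emph{at most} an isolated restricted special point rather than as a pointwise reduction at $0$. Once this is in place, the proof is a formal consequence of two standard properties of the pair-multiplicity: additivity along chains and invariance under integral closure of the smaller term.
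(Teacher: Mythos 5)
Your proof is correct and follows essentially the same route as the paper: additivity of the pair multiplicity along the chain $JM(X,\omega^{(s)})\vert_{\mathcal{C}}\subset JM(X)\vert_{\mathcal{C}}\oplus\mathcal{O}_{\mathcal{C}}^{d-k_s+1}\subset H_{c-1}(X,\mathcal{C})\oplus\mathcal{O}_{\mathcal{C}}^{d-k_s+1}$, replacement of the middle term by $JM(X,\mathcal{L})\vert_{\mathcal{C}}$ via equality of integral closures, and the characterization of reductions by vanishing of $e(A,B_1)$ for the second part. Your explicit verification that the relevant pairs have coincident integral closures off the origin, and your phrasing of the final equivalence through $e(A,B_1)=0$ rather than $e(A,C)=0$, are if anything slightly more careful than the paper's own write-up.
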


\begin{proof} Since the origin is at most an isolated restricted special point all three multiplicities are well defined. Then, the proof is based on a fundamental result due to Kleiman and
Thorup, the principle of additivity \cite{KT}. Given a sequence of
$\mathcal{O}_{X}$ modules $A \subset B \subset C$ such that the
multiplicity of the pairs is well defined,
then$$e(A,C)=e(A,B)+e(B,C).$$The result follows by setting $A= JM(X,
\omega^{s})\vert_{C}$,
$B=JM(X)\vert_{C}\oplus\mathcal{O}_{C}^{d-k_{s}+1}$,
$C=H_{d-1}(X,{\mathcal{C}})\oplus\mathcal{O}_{C}^{d-k_{s}+1}$. Using the
fact that $0$ is not a restricted special point for $\mathcal{L}$ we have that
the multiplicity of $(JM(X,
\mathcal{L})\vert_{C},H_{d-1}(X)\vert_{C}\oplus\mathcal{O}_{C}^{d-k_{s}+1})$
and
$(JM(X)\vert_{C}\oplus\mathcal{O}_{C}^{d-k_{s}+1},H_{d-1}(X)\vert_{C}\oplus\mathcal{O}_{C}^{d-k_{s}+1})$
are the same.

The origin is not a restricted special point by 5.5 if and only if  $JM(X,
\omega^{s})\vert_{C}$ is a reduction of $JM(X)\vert_{C}\oplus\mathcal{O}_{C}^{d-k_{s}+1}$. The reduction statement holds at $0$ if and only if $e(JM(X, \omega^{s})\vert_{\mathcal{C}},H_{c-1}(X,{\mathcal{C}}) \oplus \mathcal{O}_{\mathcal{C}}^{d-k_{s}+1}, 0)=0$, which is true if and only if
$$e(JM(X, \omega^{s})\vert_{\mathcal{C}},H_{c-1}(X,{\mathcal{C}}) \oplus \mathcal{O}_{\mathcal{C}}^{d-k_{s}+1}, 0)$$
$$=  e(JM(X,\mathcal{L})\vert_{\mathcal{C}},H_{c-1}(X,{\mathcal{C}})\oplus\mathcal{O}_{\mathcal{C}}^{d-k_{s}+1}, 0)$$
where
$\mathcal{L}$ is a collection of linear 1-forms such that $0$ is not
a restricted special point for it.

\end{proof}

Our next step to apply The Multiplicity Polar Theorem is to show that
the polar curve  of $JM(X,\omega^{(s)}+tM)\vert_{\mathcal{C}}$ is
empty.

\begin{Prop} Let $X^{d},0 \subset \mathbb{C}^{n},0$ and $\{\omega\}$, a
collection of $1$-forms $\{\omega\}$ $1\leq i \leq s$, $1\leq j \leq
d-k_{i}+1$, $\sum k_{i}=d$.  Assume further the collection has an isolated singularity
at the origin, and that the generic point of $S(\omega^{(i)}_{j})$,
$1\leq i \leq s-1$ is in $X_{reg}$.
 Then, the polar curve of the module $JM(X,\omega^{(s)}+tM)\vert_{\mathcal{C}}$ is empty for
$\mathfrak{C}= \mathcal{C} \times \mathbb{C}$, where $M$ is a
collection of generic linear forms.
\end{Prop}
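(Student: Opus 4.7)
The plan is to show that the polar curve $\Gamma_{k_s}(N)$ of the module $N := JM(X,\omega^{(s)}+tM)\vert_{\mathfrak{C}}$ has empty projection to $\mathfrak{C}$ near the origin when $M$ is generic. A branch of the polar curve through $(0,0)$ corresponds, via the curve criterion for integral closure recalled in Section~2, to an analytic arc $\varphi:(\mathbb{C},0)\to(\mathfrak{C},0)$ together with a generic linear functional $h$ in the ambient free module such that $h$ lies in the limiting conormal hyperplane space of $N$ along $\varphi$. Off $C(N)$, the fiber of $\Projan\mathcal{R}(N)$ is the expected projective space and a generic linear plane $H$ of codimension $g+k_s-1$ misses it, so any candidate arc $\varphi$ must have image in the closure of $C(N)$.

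Next I would analyze $C(N)\subset\mathfrak{C}$. Because $\mathcal{C}$ has generic dimension $k_s$ and the origin is an isolated special point of the original collection $\{\omega^{(i)}_j\}$, the slice $C(N)\cap(\mathcal{C}\times\{t\})$ is finite for each $t$, and by the moving lemma (Proposition~\ref{proposicao}) for generic $M$ these are nondegenerate transverse intersection points for $t\neq 0$ which accumulate only at $(0,0)$. Thus $C(N)$ is one-dimensional and its only possible branches through the origin are the finitely many smooth arcs $(p_\alpha(t),t)$ traced out by the transverse points as $t$ moves; the vertical axis $\{(0,t)\}$ cannot persist as a branch because the moving lemma forces the special point at the origin to split for $t\neq 0$.

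The main step is to show that each remaining arc $(p_\alpha(t),t)$ fails to support a polar curve component. Along such an arc the transversality in Proposition~\ref{proposicao} yields that the corresponding fiber of $\Projan\mathcal{R}(N)$ is smooth of the expected dimension, and a generic linear section $H$ avoids it by dimension count. The principal obstacle will be controlling the special fiber over $(0,0)$, which may contain ``excess'' components analogous to the $(0,M(2,3))$ example noted before Corollary~4.8: these specialization components arise as limits of conormal hyperplanes from the regular arcs and could in principle harbor a polar branch. I would handle this by exploiting the remaining parameter freedom in the constants of $M$, applying a second Sard--Bertini argument on the parameter space of perturbations so that $H$ also misses the excess fiber components. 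Together these steps give that, for generic $M$, the polar curve of $N$ is empty in a neighborhood of the origin.
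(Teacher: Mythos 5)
Your approach has two genuine problems, and it misses the short argument that actually proves the statement. First, the opening reduction is wrong: you claim that since a generic plane $H$ of codimension $g+k_s-1$ misses the fiber of $\Projan\mathcal{R}(N)$ over a point where $N$ is free, any branch of the polar curve must lie in $\overline{C(N)}$. That is not how polar varieties behave. Although $H$ misses each individual generic fiber, the fibers vary from point to point, and the polar variety of codimension $k$ is by definition the locus (generically inside the \emph{free} locus, not inside $C(N)$) where this varying family of row spaces happens to meet $H$; this locus is nonempty of codimension $k$ for a generic module. So the entire subsequent analysis of $C(N)$, its branches $(p_\alpha(t),t)$, and the excess fiber over the origin is aimed at the wrong set, and the concluding ``second Sard--Bertini argument'' on the constants of $M$ is only a sketch of an idea, not a proof that the excess components are avoided.

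Second, the paper's proof is a purely numerical observation that your argument never reaches because you never compute $g$. Since the generic point of $\mathcal{C}$ lies in $X_{reg}$ and the collection has an isolated singularity, the augmented matrix $\left(\begin{smallmatrix} D(F) \\ \omega \end{smallmatrix}\right)$ generically has maximal rank on $\mathcal{C}$, so $g=(n-d)+(d-k_s+1)=n-k_s+1$ and $\dim \Projan\mathcal{R}\bigl(JM(X,\omega^{(s)}+tM)\vert_{\mathfrak{C}}\bigr)=(k_s+1)+g-1=n+1$. The module has $n$ generators, so $\Projan\mathcal{R}$ sits in $\mathfrak{C}\times\mathbb{P}^{n-1}$, and the plane $H_{g+k_s-1}$ defining the polar curve has codimension $g+k_s-1=n$ in $\mathbb{P}^{n-1}$, hence is empty. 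The polar curve is therefore empty for trivial reasons --- no genericity of $M$, no analysis of $C(N)$, and no transversality from the moving lemma is needed at this step. You should replace your argument with this dimension count.
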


\begin{proof}
The polar variety of codimension $k$ of $M$ in $X$ denoted
$\Gamma_{k}(M)$ is constructed by intersecting $\Projan
\mathcal{R}(M)$ with $X \times H_{g+k-1}$ where $H_{g+k-1}$ consists of the set of hyperplanes which contain a
general plane of dimension $g+k-1$, and $g$ is the generic rank of
$JM(X,\omega^{(s)}+tM)$, then projecting to $X$.   Note that if $M$ has $n$ generators, so that $\Projan
\mathcal{R}(M)$ is contained in $X\times \mathbb P^{n-1}$, and the dimension of  $\Projan
\mathcal{R}(M)$ is greater than or equal to $n+1$ then the polar varieties of $M$ of codimension $n$ or more are empty, because the codimension of a point in $\mathbb P^{n-1}$ is $n-1$.

With this observation in mind,  the next step is to  compute the dimension of  $\Projan
\mathcal{R}(JM(X,\omega^{(s)}+tM)\vert_{\mathcal{C}})$.
This dimension  is the
dimension of the base plus the generic rank of
$\left(
  \begin{array}{c}
    D(F) \\
    \omega \\
  \end{array}
\right)$ minus 1. Now the generic rank of the jacobian matrix is $n-d$, while the generic rank of the jacobian matrix augmented by the $\{\omega_j^s\}$ is $(n-d)+(d-k_s+1)=n-k_{s}+1$. This follows because the generic point of ${\mathcal{C}}$ is a smooth point of $X$ hence the jacobian matrix has maximal rank there. Because $0$ is an isolated singularity, it follows that the augmented matrix generically has maximal rank. Thus we have, since $g=n-k_{s}+1$,

$\dim \Projan \mathcal{R}(JM(X, \omega^{(s)}+tM)\vert_{\mathcal{C}})=
k_{s}+1+(n-k_{s}+1))-1=n+1.$

Since the dimension of $\Projan \mathcal{R}(JM(X,
\omega^{(s)}+tM)\vert_{\mathcal{C}})$ is greater than or equal to the
number of generators, there is no polar curve for $JM(X,
\omega^{(s)}+tM$).

\end{proof}

%computation on Morse points

\begin{Prop} Suppose $X$ is smooth and $\omega$ is a $1$-form such that  $\omega$ has a Morse point at $0$ then
$e(JM(X,\omega),JM(X)\oplus \mathcal{O}_X,0)=1$.
\end{Prop}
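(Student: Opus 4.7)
The plan is to reduce everything to a very concrete local computation by choosing coordinates that make both $X$ and the embedding as simple as possible, and then to identify the quotient module with a familiar Artinian local ring.

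First, since $X$ is smooth of dimension $d$ at $0$, I would pick local coordinates on $\mathbb{C}^{n}$ so that $X=\{x_{d+1}=\cdots=x_{n}=0\}$ near the origin, with defining equations $F=(x_{d+1},\dots,x_{n})$. Writing $\omega=\sum_{i=1}^{n}\omega_{i}\,dx_{i}$ (so the restriction of $\omega$ to $X$ has components $\omega_{1},\dots,\omega_{d}$ in the coordinates $x_{1},\dots,x_{d}$), the Jacobian matrix of $F$ restricted to $X$ is the constant matrix whose $i$-th column is $e_{i-d}$ for $i>d$ and zero for $i\le d$. Hence $JM(X)=\mathcal{O}_{X}^{n-d}$ and $JM(X)\oplus \mathcal{O}_{X}=\mathcal{O}_{X}^{n-d+1}$ is free. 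The submodule $JM(X,\omega)\subset \mathcal{O}_{X}^{n-d+1}$ is generated by the $n$ columns $(0,\omega_{i})^{T}$ for $i\le d$ and $(e_{i-d},\omega_{i})^{T}$ for $i>d$.

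Next, I would compute the quotient $\mathcal{O}_{X}^{n-d+1}/JM(X,\omega)$ directly. Using the relations $e_{i-d}+\omega_{i}f=0$ coming from the last $n-d$ generators, every top-row basis vector $e_{j}$ becomes a multiple of the last-row generator $f$ in the quotient. The first $d$ generators then give the relations $\omega_{i}f=0$ for $i=1,\dots,d$. Therefore
\[
\mathcal{O}_{X}^{n-d+1}/JM(X,\omega)\;\cong\;\mathcal{O}_{X,0}/(\omega_{1},\dots,\omega_{d}).
\]
The Morse hypothesis says exactly that $\omega(0)=0$ and the map $(\omega_{1},\dots,\omega_{d})\colon X\to \mathbb{C}^{d}$ has an invertible linear part at $0$, i.e.\ the ideal $(\omega_{1},\dots,\omega_{d})$ equals the maximal ideal $\mathfrak{m}_{X,0}$ up to a unit matrix of constants. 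Hence the colength of $JM(X,\omega)$ in $\mathcal{O}_{X}^{n-d+1}$ is $\dim_{\mathbb{C}}(\mathcal{O}_{X,0}/\mathfrak{m}_{X,0})=1$.

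Finally, I would check the generic rank condition so that the colength coincides with the Buchsbaum–Rim multiplicity of the pair. Both modules sit in $\mathcal{O}_{X}^{n-d+1}$; the ambient module is free of rank $n-d+1$, and at any point of $X\setminus\{0\}$ close to $0$ the form $\omega$ is nonzero (Morse zero is isolated), so $JM(X,\omega)$ acquires maximal rank $n-d+1$ there. Thus $\overline{JM(X,\omega)}=\overline{JM(X)\oplus \mathcal{O}_{X}}$ off the origin, the pair has finite colength, and $e(JM(X,\omega),JM(X)\oplus\mathcal{O}_{X},0)$ equals this colength, which is $1$. The only minor obstacle I foresee is bookkeeping for the column reduction identifying the quotient; once one is careful that no inversion of nonunits is needed, the identification with $\mathcal{O}_{X,0}/(\omega_{1},\dots,\omega_{d})$ drops out cleanly.
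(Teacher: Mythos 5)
Your argument is correct and, at its core, takes the same route as the paper: both proofs rest on the observation that for smooth $X$ with a minimal presentation, $JM(X)\oplus\mathcal{O}_X$ \emph{is} the free module $\mathcal{O}_X^{n-d+1}$, so that $e(JM(X,\omega),JM(X)\oplus\mathcal{O}_X,0)$ reduces to the Buchsbaum--Rim multiplicity of $JM(X,\omega)$, which the Morse hypothesis forces to equal $1$. The only real divergence is in how that multiplicity is evaluated: the paper uses additivity of the pair multiplicity and then the colength of the ideal of maximal minors, which is $(\omega_1,\dots,\omega_d)=\mathfrak{m}_{X,0}$, while you compute the colength of the quotient module $\mathcal{O}_X^{n-d+1}/JM(X,\omega)\cong\mathcal{O}_{X,0}/(\omega_1,\dots,\omega_d)$ directly (your column reduction there is correct); these are two faces of the same theorem. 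One step in your last paragraph needs an explicit justification: finite colength alone does \emph{not} give ``multiplicity $=$ colength'' --- for instance $e(\mathfrak{m}^2)=4$ while $\mathcal{O}_2/\mathfrak{m}^2$ has length $3$. What you need is that $\mathcal{O}_{X,0}$ is Cohen--Macaulay and that $JM(X,\omega)$ has exactly $n=d+(n-d+1)-1$ generators, i.e.\ it is a parameter module, so that the Buchsbaum--Rim theorem (\cite{B-R}; cf.\ Corollaries 2.3 and 2.5 of the paper) applies. Both hypotheses hold in your setup, so nothing breaks, but the appeal should be made explicit. Finally, the paper devotes a second paragraph to the case where $I(X)$ is presented with more than $n-d$ generators; this is superfluous for the proposition as literally stated, but it is exactly what makes the result applicable at smooth points of a globally singular $X$ (as needed in the following proposition), so if your computation is meant to feed into the later arguments you should add the analogous remark.
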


\begin{proof}
Since $X$ is a smooth manifold, the number of
equations of $X$ is $n-d$, so the matrix of generators of
$JM(X,\omega)$ has $n-d+1$ rows, $n$ columns, and a matrix of
generators of $JM(X)\oplus \mathcal{O}_X$ also has $n-d+1$ rows with
the same $n-d$ first rows. We may assume the equations for $X$ are
$z_1=\dots=z_{n-d}=0$.

Then the Rees algebra of $JM(X)\oplus \mathcal{O}_X$ is
$\mathcal{O}_X[S_1, \dots, S_{n-d}, S_{n-d+1}]$, while the ideal
corresponding to the inclusion of the Rees algebra of $JM(X,\omega)$
in that of $JM(X)\oplus \mathcal{O}_X$ is $(S_i, {z_j}S_{n-d+1})$ where $1\le i\le n-d$, $n-d<j\le n$.
Now in this example, we know that
$1=e(JM(X,f),\mathcal{O}^{n-d+1}_X)=e(JM(X,f), JM(X)\oplus
\mathcal{O}_X)+ e(JM(X)\oplus \mathcal{O}_X,
\mathcal{O}^{n-d+1}_X)$, while $ e(JM(X)\oplus \mathcal{O}_X,
\mathcal{O}^{n-d+1}_X)=0$ since the two modules are the same. This
uses the additivity of the multiplicity, the fact that $\omega$ is
Morse on $X$, and the fact that the multiplicity of
$e(JM(X,\omega),\mathcal{O}^{n-d+1}_X)$ is the colength of its ideal
of maximal minors.

Now we want to show that we get the same result even if the number of equations is larger than $n-d$. (This happens for example, if we are working at a smooth point of a space which is singular at the origin.) Suppose our choice of
generators for $I(X)$ has $p$ generators, $p\ge n-d$. By a change of
coordinates we can assume the equations have the form
$x_1=\dots=x_{n-d}=g_{n-d+1}=\dots=g_p =0$, where the matrix of
generators of $JM(X)$ must have the last $d$ columns $0$. Then the
Rees alegbra of $JM(X)\oplus \mathcal{O}_X$  is the same as before,
as is the ideal induced by $JM(X,\omega)$, so the multiplicity of
the pair is the same.
\end{proof}

\begin{Prop} Let  $\{\omega^{(i)}_{j}\}$ be a collection of 1-forms, with  $1\leq i \leq s$, $1\leq j \leq
d-k_{i}+1$, $\sum k_{i}=d$ such that,
restricted to $X^d$, $\{\omega^{(s)}\}$ has a non-degenerate special point at $x$, $x$ a smooth point of ${\mathcal C}$ and $X$. Then
$$e(JM(X,\omega^{(s)})\vert_{\mathcal C},JM(X)\vert_{\mathcal C}\oplus
\mathcal{O}_{{\mathcal C}^{n-k_{s}+1}},x)=1.$$
\end{Prop}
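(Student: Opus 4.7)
The plan is to reduce the computation to a single Buchsbaum--Rim multiplicity on the smooth germ $(\mathcal C,x)$ of dimension $k_s$, and then identify that multiplicity with the colength of an ideal of maximal minors which the non-degeneracy hypothesis forces to be the maximal ideal $\mathfrak m_x$. Since $x$ is smooth on $X$ and on $\mathcal C$, I first choose local coordinates $z_1,\dots,z_n$ with $X=\{z_1=\cdots=z_{n-d}=0\}$ and $\mathcal C=\{z_1=\cdots=z_{n-k_s}=0\}$, and take $F=(z_1,\dots,z_{n-d})$ as defining functions for $X$ near $x$; the argument at the end of the proof of Proposition~5.11 shows that the multiplicity is insensitive to the presence of any further generators of $I(X)$. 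Then $DF=(I_{n-d}\,|\,0)$, so $JM(X)=\mathcal O_X^{n-d}$ near $x$ and hence $JM(X)\vert_{\mathcal C}\oplus\mathcal O_{\mathcal C}^{d-k_s+1}=\mathcal O_{\mathcal C}^{n-k_s+1}$.

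With this choice, the matrix of generators of $JM(X,\omega^{(s)})$ decomposes into blocks
$$\begin{pmatrix}I_{n-d}&0\\A&B\end{pmatrix},$$
where $B$ is the $(d-k_s+1)\times d$ matrix of coefficients of $\omega^{(s)}$ in the tangent directions to $X$. Column operations using the identity block eliminate $A$ without altering the submodule generated, so near $x$, $JM(X,\omega^{(s)})$ splits as a direct sum $\mathcal O_X^{n-d}\oplus M_B$, where $M_B\subset\mathcal O_X^{d-k_s+1}$ is generated by the columns of $B$. By the Kleiman--Thorup additivity principle, applied exactly as in Proposition~\ref{soma}, the multiplicity in the statement equals
$$e\bigl(M_B\vert_{\mathcal C},\,\mathcal O_{\mathcal C}^{d-k_s+1},\,x\bigr).$$

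Now I translate the non-degeneracy hypothesis. It says the section $\Gamma_{\omega^{(s)}}\vert_{\mathcal C}$ meets $\mathbb D_X^{k_s}\vert_{\mathcal C}$ transversely at $x$, which over the smooth part of $X$ is equivalent to transversality of the map $B\vert_{\mathcal C}\colon\mathcal C\to M(d-k_s+1,d)$ to the determinantal variety $D_{d-k_s+1,d}$ at $x$. In particular $B(x)$ has rank exactly $d-k_s$, placing it in the smooth stratum of $D_{d-k_s+1,d}$; a standard Schur-complement basis change there exhibits the local defining ideal of $D_{d-k_s+1,d}$ as the ideal of $(d-k_s+1)$-minors. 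Transversality of $B\vert_{\mathcal C}$ therefore forces the pullback of these $k_s$ local defining equations to form a regular system of parameters in $\mathcal O_{\mathcal C,x}$, i.e.\ $I_{d-k_s+1}(B\vert_{\mathcal C})=\mathfrak m_x$. Since $\mathcal O_{\mathcal C,x}$ is regular (hence Cohen--Macaulay) of dimension $k_s$ and $M_B\vert_{\mathcal C}$ is a submodule of maximal generic rank in $\mathcal O_{\mathcal C}^{d-k_s+1}$ with $\mathfrak m_x$-primary ideal of maximal minors, the Cohen--Macaulay case of the Buchsbaum--Rim formula (the classical theorem referenced just before Corollary~2.5) gives
$$e\bigl(M_B\vert_{\mathcal C},\,\mathcal O_{\mathcal C}^{d-k_s+1},\,x\bigr)=\dim_{\mathbb C}\mathcal O_{\mathcal C,x}/\mathfrak m_x=1,$$
as required. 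The main obstacle is precisely this translation of geometric transversality into the ideal equality $I_{d-k_s+1}(B\vert_{\mathcal C})=\mathfrak m_x$, which depends on the local structure of $D_{d-k_s+1,d}$ at a corank-$1$ matrix.
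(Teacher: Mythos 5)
Your proof is correct and follows essentially the same route as the paper's: reduce to the local model where $X$ and $\mathcal C$ are coordinate subspaces so that $JM(X)\vert_{\mathcal C}\oplus\mathcal O_{\mathcal C}^{d-k_s+1}$ is the full free module $\mathcal O_{\mathcal C}^{n-k_s+1}$, use Kleiman--Thorup additivity to replace the pair multiplicity by the Buchsbaum--Rim multiplicity $e(JM(X,\omega^{(s)})\vert_{\mathcal C},\mathcal O_{\mathcal C}^{n-k_s+1})$, and identify that with the colength of the ideal of maximal minors, which non-degeneracy forces to equal $1$. The only real difference is that where the paper asserts a Morse-type normal form $\{dx_{n-d+i},dh\}$ for $\omega^{(s)}$, you translate non-degeneracy directly into transversality to the determinantal variety at a corank-one matrix; this is a cleaner justification of the same final colength computation.
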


\begin{proof}
Let us suppose, that $X$ is a smooth manifold and the number of
equations of $X$ is $n-d$ so that the matrix of generators of
$JM(X,\omega^{(s)})$ has $n-k_{s}+1$ rows, $n$ columns, and a matrix of
generators of $JM(X)\oplus \mathcal{O}^{d-k_{s}+1}_{\mathcal C}$ also has
$n-k_{s}+1$ rows with the same $n-d$ first rows. We may assume the
equations for $X$ are $x_1=\dots=x_{n-d}=0$. Since we assume $\mathcal C$ is smooth at $x$, and it has dimension $k_s$, assume that the last $k_s$ coordinates on $X$ define $\mathcal C$. We may assume that the collection $\omega^{(s)}$ has form $\{ dx_{n-d+i}, dh\}$ where $1\le i\le d-k_s$ and $h=\sum_{j=1}^{k_s} x_{n-k_s+j}^2$.

As in the last Proposition, in this example, we know that
$$1=e(JM(X,\omega^{(s)})\vert_{C},\mathcal{O}^{n-k_{s}+1}_{C})$$
$$=e(JM(X,\omega^{(s)}),
JM(X)\vert_{C}\oplus \mathcal{O}_{C})+ e(JM(X)\vert_{C}\oplus
\mathcal{O}^{d-k_{s}+1}_C), \mathcal{O}^{n-k_{s}+1}_{C}),$$ while $
e(JM(X)\vert_{C}\oplus \mathcal{O}^{d-k_{s}+1}_{C},
\mathcal{O}^{n-k_{s}+1}_{C})=0$ since the two modules are the same.
This uses the additivity of the multiplicity, the fact that $\omega$
is non-degenerate on $X$, and the fact that the multiplicity of
$e(JM(X,\omega^{(s)}),\mathcal{O}^{n-k_{s}+1}_X)$ is the colength of its
ideal of maximal minors, and as in the last Proposition, the general
result follows.

\end{proof}

Before giving our main result, it is useful to consider the difference between the case of a vector bundle well defined at all points, and a bundle like the tangent bundle to a singular space which is not well defined at $S(X)$. In the second case, we get a special point if  $\widetilde{X}(0)\cap B(\omega)$ is non-empty. If we alter the last collection of forms, then we can make the last collection generic on ${\mathcal{C}}$, but the singular locus of the modified forms may still be non-empty. In this case the intersection number $Im(\Gamma_{\widetilde\omega})$ and $\mathbb{D}_{X}^{k}$ may still be non-zero at the origin.

In the first case, the analogue of  $\widetilde{X}(0)$ consists of a single point, so by altering the last collection of forms we can ensure that the intersection number $Im(\Gamma_{\widetilde\omega})$ and $\mathbb{D}_{X}^{k}$ is zero at the origin.

This phenomena is the reason that the formula for the Chern numbers for the Nash bundle has many terms, while that of a vector bundle on $X$ has only one.

The next theorem is the key step in the proof of our main result. It allows us to fix each of the collections in turn, until we are left with collections which are linearly independent at the origin and which are proper. Of course, this last collection has no special points.

%%%%%%%%%%%%%%%%%%%%%%%%%%%%%%%%%%%%%%%%%%%%%%%%%%%%%%%%%%%%%%%%%%%%%
Let $X^{d},0 \subset \mathbb{C}^{n},0$ and

\begin{theorem}\label{index} Let $(X^d,0) \subset (\mathbb{C}^{n},0)$ be the germ of an
equidimensional reduced analytic variety, with representative
$X$, $\{\omega^{(i)}_{j}\}$, a
collection of $1$-forms with $1\leq i \leq s$, $1\leq j \leq
d-k_{i}+1$, $\sum k_{i}=d.$ Assume further the collection has an isolated singularity
at the origin, and that the generic point of $S(\omega^{(i)}_{j})$
$1\leq i \leq s-1$ is in $X_{reg}$.  We have that,

$$Ch_{X,0}\{\omega^{(i)}_{j}\}=
e(JM(X,\omega^{(s)})\vert_{\mathcal{C}},
JM(X)\vert_{\mathcal{C}}\oplus\mathcal{O}_{\mathcal{C}}^{d-k_{s}+1},
0)+ Im(\Gamma_{\widetilde\omega})\cdot \mathbb{D}_{X}^{k}.$$

\end{theorem}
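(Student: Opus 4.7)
The plan is to convert the Chern obstruction into an intersection number, use the moving lemma to split off the contribution of the last collection, and then invoke the Multiplicity Polar Theorem to identify that residual contribution with the asserted multiplicity.

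First, by Proposition 5.1 together with the intersection-theoretic reformulation of special points developed in Section 4, one has $Ch_{X,0}\{\omega^{(i)}_j\} = Im(\Gamma_\omega)\cdot \mathbb{D}_X^k$. Deforming only the last collection by $\omega^{(s)} \rightsquigarrow \omega^{(s)}+tM$ for a generic matrix $M$ of linear forms and applying Corollary 4.11 yields
\[
Im(\Gamma_\omega)\cdot \mathbb{D}_X^k \;=\; \Gamma(\omega_s)\cdot \mathbb{D}^{k_s}|_{\mathcal{C}} \;+\; Im(\Gamma_{\widetilde\omega})\cdot \mathbb{D}_X^k,
\]
where $\widetilde\omega$ is the perturbed collection for small $t\ne 0$. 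Hence the theorem reduces to the identification
\[
\Gamma(\omega_s)\cdot \mathbb{D}^{k_s}|_{\mathcal{C}} \;=\; e\bigl(JM(X,\omega^{(s)})|_{\mathcal{C}},\, JM(X)|_{\mathcal{C}}\oplus \mathcal{O}_{\mathcal{C}}^{d-k_s+1},\, 0\bigr).
\]

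For this, apply the Multiplicity Polar Theorem on $\mathfrak{C} = \mathcal{C}\times \mathbb{C}$ to the pair
\[
\mathcal{M}(t) \;=\; JM\bigl(X,\omega^{(s)}+tM\bigr)\big|_{\mathfrak{C}}, \qquad \mathcal{N} \;=\; JM(X)|_{\mathcal{C}}\oplus \mathcal{O}_{\mathcal{C}}^{d-k_s+1},
\]
the latter constant in $t$. For generic small $t\ne 0$, the moving lemma (Proposition 4.9) produces exactly $\Gamma(\omega_s)\cdot \mathbb{D}^{k_s}|_{\mathcal{C}}$ non-degenerate intersection points of $\Gamma_{\omega^{(s)}+tM}$ with $\mathbb{D}^{k_s}|_{\mathcal{C}}$ near the origin; by Proposition 5.12 each contributes exactly $1$ to $e(\mathcal{M}(t),\mathcal{N},\cdot)$, so $e(\mathcal{M},\mathcal{N},t) = \Gamma(\omega_s)\cdot \mathbb{D}^{k_s}|_{\mathcal{C}}$. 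Proposition 5.10 gives $\Gamma_{k_s}(\mathcal{M})=\emptyset$, so the polar formula collapses to
\[
e(\mathcal{M}(0),\mathcal{N},0) \;=\; \Gamma(\omega_s)\cdot \mathbb{D}^{k_s}|_{\mathcal{C}} \;+\; mult_y\,\Gamma_{k_s}(\mathcal{N}).
\]

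The main obstacle is to verify that the last polar term vanishes. Since $\mathcal{N}$ does not depend on $t$, its family polar factors as $\Gamma_{k_s}(\mathcal{N}|_{\mathcal{C}})\times \mathbb{C}$, so the issue is local on $\mathcal{C}$ at the origin. I expect to establish the vanishing by a $\Projan$-dimension argument in the spirit of Proposition 5.10: the free summand $\mathcal{O}_{\mathcal{C}}^{d-k_s+1}$ together with the hypothesis that the generic point of $\mathcal{C}$ lies in $X_{reg}$ should allow the generic slicing plane of codimension $g+k_s-1$ to avoid the fiber of $\Projan\mathcal{R}(\mathcal{N}|_{\mathcal{C}})$ over $0$. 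Granting this, the three steps combine to give the asserted splitting
\[
Ch_{X,0}\{\omega^{(i)}_j\} = e\bigl(JM(X,\omega^{(s)})|_{\mathcal{C}},\, JM(X)|_{\mathcal{C}}\oplus \mathcal{O}_{\mathcal{C}}^{d-k_s+1},\, 0\bigr) + Im(\Gamma_{\widetilde\omega})\cdot \mathbb{D}_X^k.
\]
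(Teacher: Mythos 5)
Your proposal follows essentially the same route as the paper's proof: identify $Ch_{X,0}$ with the intersection number, split off the contribution of the last collection via the moving lemma (Proposition 4.9 / Corollary 4.11), and then run the Multiplicity Polar Theorem on the family over $\mathcal{C}\times\mathbb{C}$ with $\mathcal{N}$ constant in $t$, using Proposition 5.12 to see that each non-degenerate point contributes $1$ to the generic fiber multiplicity and Proposition 5.10 to kill the polar of $\mathcal{M}$. Two points deserve attention. First, the step you flag as outstanding --- the vanishing of $mult_y\,\Gamma_{k_s}(\mathcal{N})$ --- does close along the lines you sketch, but note that the bare dimension count of Proposition 5.10 does not apply verbatim to $\mathcal{N}=JM(X)\vert_{\mathcal{C}}\oplus\mathcal{O}_{\mathcal{C}}^{d-k_s+1}$, since this module has $n+d-k_s+1>n$ generators, so the codimension-$n$ slicing plane is not automatically empty in the relevant projective space; you really do need the supplementary observation that the generic plane misses the fiber of $\Projan\mathcal{R}(\mathcal{N}(0))$ over the origin (equivalently, the paper's shortcut: $\Projan\mathcal{R}(\mathcal{N})$ is a product in $t$, so $\mathcal{N}$ contributes no polar curve).

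Second, and this is the genuine omission, you never verify the specialization hypothesis of the Multiplicity Polar Theorem, namely that $C(\Projan\mathcal{R}(\mathcal{M}))(0)=C(\Projan\mathcal{R}(\mathcal{M}(0)))$ except possibly over the origin. The theorem has two hypotheses, and this one is not automatic: it is what guarantees that no component of the locus where $\mathcal{M}$ drops rank appears in the special fiber without being the limit of such loci in nearby fibers. The paper devotes a full paragraph to it, arguing that on a punctured neighborhood of the origin in $\mathcal{C}\times\{0\}$ one has $\overline{JM(X\times\mathbb{C},\omega_t)_{\pi_t}\vert_{\mathcal{C}\times\mathbb{C}}}=\mathcal{N}$, so that $\Projan\mathcal{R}(\mathcal{N})$ is finite over $\Projan\mathcal{R}(\mathcal{M})$ there, and then deducing the desired equality from the product structure of $\Projan\mathcal{R}(\mathcal{N})$ together with the inclusion $\Projan\mathcal{R}(JM(X,\omega_0)\vert_{\mathcal{C}})\subset\Projan\mathcal{R}(\mathcal{M})(0)$. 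Without this check the application of the Multiplicity Polar Theorem, and hence the identification $e(\mathcal{M}(0),\mathcal{N},0)=e(\mathcal{M}(t),\mathcal{N},t)$, is not justified.
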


\begin{proof} Let us consider the family of sets ${\mathcal{C} }\times \mathbb{C}$.
Let $\pi_{{\mathcal{C}}}$ denote the projection from ${\mathcal{C} }\times \mathbb{C}$ to
$\mathcal{C}$, and $\pi_{t}$ the projection to $\mathbb{C}$. By conservation
of number and taking M as in the Proposition \ref{proposicao},
$\Gamma(\omega^{s})\cdot T^{*}(X)\vert_{{\mathcal{C}}}$ is just
$$\Gamma(\omega^{s}+ tM)\cdot T^{*}(X)\vert_{\mathcal{C}}$$ for $t$ close to $0$,
and this is just the number of non degenerate special points of
$\{(\omega)_{1\leq i\leq s-1}, \omega^{s}+tM\}$ for $t\ne 0$, and the intersection number  $Im(\Gamma_{\widetilde\omega})\cdot \mathbb{D}_{X}^{k}.$ (Recall that the collection ${\widetilde\omega}$ was defined before Cor 4.12.)
%%%
To show that the Multiplicity Polar theorem applies, we must also show that $$C(\Projan (\mathcal{R}(JM(X\times\mathbb{C},
\omega_{t})_{\pi_{t}}\vert_{{\mathcal{C}}\times \mathbb{C}})))(0) = C(\Projan (\mathcal{R}(JM(X,
\omega)\vert_{\mathcal{C}}))),$$ except possibly over $(0, 0) \in {\mathcal{C} } \times 0$.

Since $N = JM(X)\vert_{\mathcal{C}\times C}\mathcal{O}_{\mathcal{C}\times C}\oplus
\mathcal{O}_{\mathcal{C}\times C}^{d-k_{s}+1}$ as a family of modules is
independent of $t$, $\Projan R(N)$ is a product, hence
$C(\Projan \mathcal{R}(N))(0) = C(\Projan \mathcal{R}(N(0)))$. Now,  at any
point p of ${\mathcal{C} } \times 0$ close to the origin, there exists a
neighborhood $U$ of $p$ such that on $U$, $\overline{JM(X\times\mathbb{C},
\omega_{t})_{\pi_{t}}\vert_{{\mathcal{C}}\times \mathbb{C}}} = N$. This implies that over $U$,
$\Projan \mathcal{R}(N)$ is finite over $\Projan \mathcal{R}(JM(X\times\mathbb{C},
\omega_{t})_{\pi_{t}}\vert_{{\mathcal{C}}\times \mathbb{C}}),$ and on $U \cap C \times 0,
\Projan \mathcal{R}(N(0))$ is finite over $\Projan\mathcal{R}(JM(X, \omega_{0})\vert_{C}).$
Now, since $\Projan R(JM(X, \omega_{0})\vert_{C}) \subset
\Projan \mathcal{R}(JM(X\times\mathbb{C},
\omega_{t})_{\pi_{t}}\vert_{{\mathcal{C}}\times \mathbb{C}})(0),$ the desired
equality follows, for any element of $\Projan \mathcal{R}(JM(X\times\mathbb{C},
\omega_{t})_{\pi_{t}}\vert_{{\mathcal{C}}\times \mathbb{C}} )(0)$ has a preimage in
$\Projan \mathcal{R}(N)(0)$ which is $\Projan \mathcal{R}(N(0)),$ and the last set maps to
$\Projan \mathcal{R}((JM(X, \omega_{0})\vert_{C})).$ So, the multiplicity polar
theorem applies. Note, that since $\Projan \mathcal{R}(N)$ is a product, N has
no polar curve, and by 5.10 we know that $JM(X\times\mathbb{C},
\omega_{t})_{\pi_{t}}\vert_{{\mathcal{C}}\times \mathbb{C}} )$ has no polar curve either.
%%%%
Now, by Proposition \ref{proposicao}  we have,
$Ch_{X,0}(\omega_{j}^{(i)})=\Gamma(\omega^{s}+ tM)\cdot
T^{*}(X)\vert_{C}$.

Then, using the Multiplicity Polar Theorem we have,

$$Ch_{X,0}(\omega_{j}^{(i)})=e(JM(X,\omega^{(s)})\vert_{\mathcal{C}},
JM(X)\vert_{\mathcal{C}}\oplus\mathcal{O}_{\mathcal{C}}^{d-k_{s}+1},
0)+ Im(\Gamma_{\widetilde\omega})\cdot \mathbb{D}_{X}^{k}.$$
\end{proof}

 Suppose the collection is ordered so that the first $r$ collections meet properly and $r$ is the largest integer for which this is true. Let $C_i$ denote $C(\omega^{(1)},\dots,\omega^{(i)}, \widetilde{\omega^{(s)}},\dots,  \widetilde{\omega^{(i+2)}} ) $ where $i\le s-1$, and  $\widetilde{\omega^{(j)}}$ is a collection of generic linear forms so that the collections $\{\omega^{(i)}\}, i\le r$, $\{\widetilde{\omega^{(j)}}\}$ meet properly.

\begin{cor}\label{formula} Suppose the collection is ordered so that the first $r$ collections meet properly and $r$ is the largest integer for which this is true. In the setup of the last Theorem we have that
$$Ch_{X,0}(\omega)=\sum_{i=r}^{i=s-1} e(JM(X,\omega^{(i+1)})\vert_{\mathcal{C}_i},
JM(X)\vert_{\mathcal{C}_i}\oplus\mathcal{O}_{\mathcal{C}_i}^{d-k_{i+1}+1},
0)$$
\end{cor}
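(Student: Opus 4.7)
The plan is to prove the formula by backwards induction on $i$, using Theorem~\ref{index} as the inductive step. For $r-1 \leq i \leq s-1$, let
$$\omega_i := (\omega^{(1)}, \ldots, \omega^{(i+1)}, \widetilde{\omega^{(i+2)}}, \ldots, \widetilde{\omega^{(s)}})$$
be the collection obtained from the original $\omega$ by replacing its last $s-i-1$ subcollections with the generic linear collections $\widetilde{\omega^{(j)}}$ appearing in the definition of $\mathcal{C}_i$. Thus $\omega_{s-1}$ is the original $\omega$, while $\omega_{r-1}$ is the collection in which every subcollection beyond index $r$ has been replaced by a generic linear one, so that all $s$ subcollections meet properly.

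The core step is to apply Theorem~\ref{index} to $\omega_i$ after reordering so that $\omega^{(i+1)}$ plays the role of the distinguished last element; the remaining $s-1$ subcollections are then $\omega^{(1)}, \ldots, \omega^{(i)}, \widetilde{\omega^{(i+2)}}, \ldots, \widetilde{\omega^{(s)}}$, whose joint special locus is exactly $\mathcal{C}_i$. Theorem~\ref{index} then yields
\begin{align*}
Ch_{X,0}(\omega_i) &= e\bigl(JM(X,\omega^{(i+1)})\vert_{\mathcal{C}_i},\, JM(X)\vert_{\mathcal{C}_i}\oplus\mathcal{O}^{d-k_{i+1}+1}_{\mathcal{C}_i},\, 0\bigr) \\
&\quad {}+ Im(\Gamma_{\widetilde\omega_i})\cdot \mathbb{D}_X^k,
\end{align*}
where $\widetilde\omega_i$ denotes $\omega_i$ with $\omega^{(i+1)}$ replaced by a generic linear $\widetilde{\omega^{(i+1)}}$; that is, $\widetilde\omega_i = \omega_{i-1}$. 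By Proposition~5.1 the second term equals $Ch_{X,0}(\omega_{i-1})$, giving the recursion $Ch_{X,0}(\omega_i) = T_i + Ch_{X,0}(\omega_{i-1})$, with $T_i$ the multiplicity term indexed by $i$ in the statement. Telescoping from $i=s-1$ down to $i=r$ produces
$$Ch_{X,0}(\omega) = \sum_{i=r}^{s-1} T_i + Ch_{X,0}(\omega_{r-1}).$$

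It remains to show that the boundary term $Ch_{X,0}(\omega_{r-1})$ vanishes. Since all $s$ subcollections of $\omega_{r-1}$ meet properly and $\sum k_i = d$, Remark~4.6 forces $\widetilde{X}(S(X))\cap \mathbb{B}(\omega_{r-1})=\emptyset$, and the argument at the end of the proof of Corollary~4.12 then shows that $Im(\Gamma_{\omega_{r-1}})\cdot \mathbb{D}_X^k$ is empty at the origin; hence $Ch_{X,0}(\omega_{r-1})=0$ by Proposition~5.1. The main difficulty I anticipate is bookkeeping: at each step I must confirm that the hypotheses of Theorem~\ref{index} survive the substitutions, namely that $0$ remains an isolated special point of $\omega_i$ and that the generic point of each component of $\mathcal{C}_i$ lies in $X_{reg}$. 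Both follow from the genericity of the $\widetilde{\omega^{(j)}}$ by way of the moving lemma Proposition~\ref{proposicao} and its attendant remark, and the choice of $r$ as the largest index with the first $r$ original subcollections proper is precisely what guarantees that the induction terminates at $\omega_{r-1}$, where the full collection becomes proper.
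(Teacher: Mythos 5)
Your proof is correct and follows essentially the same route as the paper: repeated application of Theorem~5.13, peeling off one multiplicity term per step by replacing the distinguished subcollection with a generic linear one, and terminating because the fully properized collection has vanishing intersection number by properness (Remark~4.6). Your write-up simply makes explicit the telescoping and the identification of the residual term $Im(\Gamma_{\widetilde\omega_i})\cdot\mathbb{D}_X^k$ with $Ch_{X,0}(\omega_{i-1})$, which the paper's two-line proof leaves implicit.
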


\begin{proof} We prove the Corollary by applying the previous theorem multiple times. First, to $\{\omega(s)|_ {\mathcal{C}_{s-1}}\}$, then to $\{\omega(s-1)|_ {\mathcal{C}_{s-2}}\}$. Finally, when all but one of our collections meet properly, applying the theorem to $\{\omega(r+1)|_ {\mathcal{C}_{r}}\}$ produces only a single term as the intersection number term is $0$.

\end{proof}

%:
Let $C_{i'}$ denote $C( \widetilde{\omega^{(s)}},\dots,  \widetilde{\omega^{(i+2)}} ) $. Then, $C_{i'}$is related to the polar varieties of $X$.  For $C( \widetilde{\omega^{(i+2)}})$ is the polar variety of codimension $k(i+2)$, so $C_{i'}$ is the intersection of the corresponding polar varieties. If $X$ is a hypersurface, then  in fact this is the polar variety of codimension $\sum_{i+2}^s k(j)$. The hypersurface case is special because since $TX_x$, $x\in X_0$ has codimension 1, the kernels of all of the $ \widetilde{\omega^{(j)}}$ are contained in $TX_x$ if $x\in C_{i'}$, hence $x$ is in the polar defined by the union of the kernels.

\begin{cor} \label{expansion} In the set up of the last proposition we have
$$Ch_{X,0}(\omega)=\sum_{i=r}^{i=s-1} e(JM(X,\omega^{(i+1)})\vert_{\mathcal{C}_i},
JM(X)\vert_{\mathcal{C}_i}\oplus\mathcal{O}_{\mathcal{C}_i}^{d-k_{i+1}+1},
0)$$
$$= \sum_{i=r}^{i=s-1} e(JM(X, \omega^{i+1})\vert_{\mathcal{C}_i},H_{c_i-1}(X,{\mathcal{C}_i}) \oplus \mathcal{O}_{\mathcal{C}_i}^{d-k_{i+1}+1}, 0)$$ $$ - e(JM(X,\widetilde{ \omega^{i+1}})\vert_{\mathcal{C}_i},H_{c_i-1}(X,{\mathcal{C}_i})\oplus\mathcal{O}_{\mathcal{C}_i}^{d-k_{i+1}+1}, 0).$$\end{cor}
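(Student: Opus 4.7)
My plan is to prove the second equality termwise by applying Proposition \ref{soma} to each summand in the first sum, since the first equality is just a restatement of Corollary \ref{formula}.

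Fix an index $i$ with $r\le i\le s-1$. The $i$-th summand
$$e(JM(X,\omega^{(i+1)})\vert_{\mathcal{C}_i},\; JM(X)\vert_{\mathcal{C}_i}\oplus\mathcal{O}_{\mathcal{C}_i}^{d-k_{i+1}+1},\; 0)$$
has exactly the shape of the left-hand side of Proposition \ref{soma}, with the roles of $\omega^{(s)}$ and $\mathcal{C}$ played by $\omega^{(i+1)}$ and $\mathcal{C}_i$ respectively, and with $\mathcal{L}$ played by the generic linear collection $\widetilde{\omega^{(i+1)}}$. To invoke Proposition \ref{soma} at each level I need two ingredients. First, that the origin is at most an isolated restricted special point of the collection $\{\omega^{(1)},\dots,\omega^{(i)}, \widetilde{\omega^{(s)}},\dots,\widetilde{\omega^{(i+2)}}, \omega^{(i+1)}\}$ defining $\mathcal{C}_i$ together with $\omega^{(i+1)}$; this follows from the original isolated special point hypothesis on $\{\omega^{(i)}_j\}$ combined with the genericity of the $\widetilde{\omega^{(j)}}$ built into the iterative proof of Corollary \ref{formula} (which itself rests on Proposition 4.10 and the properness preserved by generic linear perturbation). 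Second, that $\widetilde{\omega^{(i+1)}}$ can be chosen as a collection of linear forms for which $0$ is not a restricted special point of the modified collection; this is again a generic choice, justified by Proposition 4.10 together with Proposition 5.4 characterising the non-restricted-special-point condition via reduction of modules.

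With these two hypotheses verified, Proposition \ref{soma} yields, for each $i$,
$$e(JM(X,\omega^{(i+1)})\vert_{\mathcal{C}_i},\; JM(X)\vert_{\mathcal{C}_i}\oplus\mathcal{O}_{\mathcal{C}_i}^{d-k_{i+1}+1},\; 0)$$
$$= e(JM(X,\omega^{(i+1)})\vert_{\mathcal{C}_i},\; H_{c_i-1}(X,\mathcal{C}_i)\oplus\mathcal{O}_{\mathcal{C}_i}^{d-k_{i+1}+1},\; 0)$$
$$\quad -\; e(JM(X,\widetilde{\omega^{(i+1)}})\vert_{\mathcal{C}_i},\; H_{c_i-1}(X,\mathcal{C}_i)\oplus\mathcal{O}_{\mathcal{C}_i}^{d-k_{i+1}+1},\; 0).$$
Summing over $i$ from $r$ to $s-1$ and combining with Corollary \ref{formula} gives the desired identity.

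The main (and essentially only) obstacle is the bookkeeping in the verification of hypothesis (i): one must check that after substituting generic linear collections $\widetilde{\omega^{(j)}}$ for $j>i+1$, and before splitting off $\omega^{(i+1)}$, the origin still appears as at most an isolated restricted special point of the resulting intermediate collection on $X$. This is a careful but routine unwinding of the genericity arguments already used in the inductive proof of Corollary \ref{formula}, so no new machinery is needed beyond Propositions 4.10, 5.4 and \ref{soma}.
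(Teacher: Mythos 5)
Your proposal is correct and follows exactly the paper's own route: the first equality is Corollary \ref{formula}, and the second is obtained by applying Proposition \ref{soma} termwise with $\mathcal{C}_i$, $\omega^{(i+1)}$ and $\widetilde{\omega^{(i+1)}}$ in the roles of $\mathcal{C}$, $\omega^{(s)}$ and $\mathcal{L}$. The paper's proof is in fact just the one-line instruction to expand each summand via Proposition \ref{soma}; your additional care in checking that the origin remains at most an isolated restricted special point for each intermediate collection is a reasonable elaboration of what the paper leaves implicit.
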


\begin{proof} Apply Propositon 5.9 to expand $e(JM(X,\omega^{(i+1)})\vert_{\mathcal{C}_i},
JM(X)\vert_{\mathcal{C}_i}\oplus\mathcal{O}_{\mathcal{C}}^{d-k_{i+1}+1},
0)$.

\end{proof}

We want to specialize our results to the case where $X^d,0\subset \mathbb C^n$ is an ICIS to compare with those of \cite{EG}. Given a collection of holomorphic forms $\omega$ with an isolated singular point at $0$, Ebeling and Gussein-Zade define another notion of index in \cite{EG}. In the case $X,0$ is an ICIS, the index amounts to smoothing $X$ as well as making the forms general, then counting the number of singular points of the new collection on the smoothing. This index can be calculated as follows: suppose ${\omega_j^{(i)}}$, $1\le i\le s$, $1\le j\le n_{k_i}+1$, $\sum k_i=d$, augment the jacobian matrix of $X$ for each $i$ with ${\omega_j^i}$, producing $s$ matrices. Form an ideal in ${\mathcal  O}_n$, using as generators, the generators of $I(X)$, and the maximal minors of the augmented matrices. Denote the resulting ideal by $I_{X,{\omega_j^{(i)}}}$. Then the index, denoted $ind_{X,0}(\{\omega\})$ is just the colength of $I_{X,{\omega_j^{(i)}}}$
in ${\mathcal  O}_n$. (\cite {EG}, Theorem 20.) Using this index they show that

$$Ch_{X,0}=ind_{X,0}(\{\omega\})-ind_{X,0}(\{l\})$$
where $l=\{l^{(i)}_j\}$ is a generic collection of forms. (\cite {EG} Cor. 4.)

We will see that this formula can be recovered from the last corollary. If $X$ is an ICIS, and the ${\mathcal{C}_i}$  have the minimal dimension then  the ${\mathcal{C}_i}$ are Cohen-Macaulay, with ideal the ideal of $X$ and the maximal minors of the augmented matrices. Further, the  matrix of generators of   $JM(X)$  has maximal rank except at the origin when restricted to ${\mathcal{C}_i}$. This implies that
$H_{c_i-1}(X,{\mathcal{C}_i})\vert_{\mathcal{C}_i}$ is free, so
$e(JM(X, \omega^{i+1})\vert_{\mathcal{C}_i},H_{c_i-1}(X,{\mathcal{C}_i}) \oplus \mathcal{O}_{\mathcal{C}_i}^{d-k_{i+1}+1}, 0)=e(JM(X, \omega^{i+1})\vert_{\mathcal{C}_i}.$
Since ${\mathcal{O_C}_i}$ is Cohen-Macaulay, the last multiplicity is just the colength of the ideal formed by the maximal minors of the augmented matrices formed from $JM(X)$, the collection of forms used to define ${\mathcal{C}_i}$ and $\omega^{i+1}$. It follows that
$$e(JM(X, \omega^{i+1})\vert_{\mathcal{C}_i}=ind( \{\omega^{(1)},\dots,\omega^{(i)} ,\omega^{i+1}, \widetilde{\omega^{(s)}},\dots,  \widetilde{\omega^{(i+2)}}\}).$$

Now the sum on the right hand side of the last corollary telescopes to
$$Ch_{X,0}=ind_{X,0}(\{\omega\})-ind( \{\omega^{(1)},\dots,\omega^{(r)},\widetilde{\omega^{(s)}},\dots,  \widetilde{\omega^{(r+1)}}\}).$$
Since the last collection on the right hand side is proper, an argument similar to that of Prop. 5.5 shows that the last term is $ind_{X,0}(\{l\})$.

In the case of surfaces it is not hard to compute with our formula, and we give some examples. As preparation we give two versions of our formula for the case of surfaces which are not ICIS.

The general case of our theorem becomes:

\begin{cor} Let $(X,0) \subset (\mathbb{C}^{n},0)$ be the germ of a
purely 2 dimensional reduced analytic variety, with representative
$X$, $\{\omega\}$, a
collection of sets of $1$-forms $\{\omega\}$ $1\leq i \leq 2$, each with two elements.  Assume further the collection has an isolated singularity
at the origin, and that the generic point of $S(\omega^{(1)}_{j})$
is in $X_{reg}$.  We have that,

$$Ch_{X,0}\{\omega^{(i)}_{j}\}=
e(JM(X,\omega^{(2)})\vert_{\mathcal{C}},
JM(X)\vert_{\mathcal{C}}\oplus\mathcal{O}_{\mathcal{C}}^{d-k_{s}+1},
0)+ Im(\Gamma_{\widetilde\omega})\cdot \mathbb{D}_{X}^{k}.$$

Further,
$$Ch_{X,0}\{\omega^{(i)}_{j}\}= e(JM(X, \omega^{2})\vert_{\mathcal{C}_1},H_{0}(X,{\mathcal{C}_1}) \oplus \mathcal{O}_{\mathcal{C}_1}^{n-d}, 0) $$
 $$-e(JM(X,\widetilde{ \omega^{2}})\vert_{\mathcal{C}_1},H_{0}(X,{\mathcal{C}_1})\oplus\mathcal{O}_{\mathcal{C}_1}^{n-d}, 0)
+e(JM(X, \omega^{1})\vert_{\mathcal{C}_0},H_{0}(X,{\mathcal{C}_0}) \oplus \mathcal{O}_{\mathcal{C}_0}^{n-d}, 0) $$
$$ -e(JM(X,\widetilde{ \omega^{1}})\vert_{\mathcal{C}_0},H_{0}(X,{\mathcal{C}_0})\oplus\mathcal{O}_{\mathcal{C}_0}^{n-d}, 0).$$
\end{cor}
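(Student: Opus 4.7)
The plan is to derive both identities as direct specializations of earlier results in the paper; no new machinery will be required, and the proof will amount essentially to substitution and bookkeeping.

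For the first identity, I would apply Theorem~\ref{index} with $d=2$, $s=2$, and $k_1=k_2=1$, so each of the two collections has $d-k_i+1=2$ one-forms and $\sum k_i = 2 = d$ as required. The hypothesis that $0$ is an isolated special point and the condition that the generic point of $S(\omega^{(1)}_j)$ lie in $X_{reg}$ transfer verbatim from the statement of the corollary. Theorem~\ref{index} then immediately gives the claimed decomposition of $Ch_{X,0}\{\omega^{(i)}_j\}$ into the multiplicity term
$$e(JM(X,\omega^{(2)})|_{\mathcal{C}}, JM(X)|_{\mathcal{C}} \oplus \mathcal{O}_{\mathcal{C}}^{d-k_s+1}, 0)$$
plus the residual intersection number $\mathrm{Im}(\Gamma_{\widetilde\omega}) \cdot \mathbb{D}_X^k$, with $\mathcal{C} = S(\omega^{(1)}_j)$.

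For the second identity, I would invoke Corollary~\ref{expansion}. In the surface situation relevant here we take the worst case $r=0$, so the sum over $i=r,\dots,s-1$ reduces to a sum over $i\in\{0,1\}$. Corollary~\ref{expansion} expresses each summand, via Proposition~\ref{soma}, as the two-term difference
$$e\bigl(JM(X,\omega^{(i+1)})|_{\mathcal{C}_i},\, H_{c_i-1}(X,\mathcal{C}_i) \oplus \mathcal{O}^{\bullet}_{\mathcal{C}_i},\, 0\bigr) - e\bigl(JM(X,\widetilde{\omega^{(i+1)}})|_{\mathcal{C}_i},\, H_{c_i-1}(X,\mathcal{C}_i) \oplus \mathcal{O}^{\bullet}_{\mathcal{C}_i},\, 0\bigr),$$
and collecting the contributions from $i=0$ and $i=1$ yields precisely the four summands in the statement.

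The only point requiring genuine attention is to verify that the auxiliary varieties $\mathcal{C}_0, \mathcal{C}_1$ defined in the paragraph preceding Corollary~\ref{formula} have the expected dimensions inside $X^2$, so that Proposition~\ref{soma} and the multiplicity polar theorem apply and all four multiplicities are well-defined. This is where the genericity of the linear replacements $\widetilde{\omega^{(j)}}$ is used, and it is precisely guaranteed by Proposition 4.10. I do not expect any essential difficulty beyond this check: the corollary is a low-dimensional repackaging of Theorem~\ref{index} and Corollary~\ref{expansion} tailored for explicit computation on surfaces.
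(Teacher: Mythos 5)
Your proposal is correct and matches the paper's own proof, which simply states that the corollary is Theorem~5.13 and Corollary~5.15 specialized to the surface case ($d=2$, $s=2$, $k_1=k_2=1$, $r=0$). Your added remarks about checking the dimensions of $\mathcal{C}_0,\mathcal{C}_1$ and the role of the generic linear replacements are consistent with the hypotheses already built into those results, so nothing further is needed.
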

\begin{proof} This is Theorem 5.13 and Corollary 5.15 for the surface case.
\end{proof}
Further simplification is possible, if $X$ is a complete intersection.
\begin{cor}  Suppose in addition $X$ is a complete intersection. Then
$$Ch_{X,0}\{\omega^{(i)}_{j}\}= e(JM(X, \omega^{2})\vert_{\mathcal{C}_1}, 0)
 -e(JM(X,\widetilde{ \omega^{2}})\vert_{\mathcal{C}_1},0)$$
$$+e(JM(X, \omega^{1})\vert_{\mathcal{C}_0}, 0)
 -e(JM(X,\widetilde{ \omega^{1}})\vert_{\mathcal{C}_0}, 0)$$
 $$=\Gamma^1(\omega^1)\cdot\Gamma^1(\omega^2)-\Gamma^1(\widetilde \omega^1)\cdot\Gamma^1(\widetilde \omega^2)$$
\end{cor}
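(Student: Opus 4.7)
The plan is to deduce both equalities from the previous corollary (the surface version of Corollary 5.16) by exploiting the Cohen--Macaulay structure that the complete intersection hypothesis provides, and then to convert the resulting Buchsbaum--Rim multiplicities into polar intersection numbers via Corollary 2.4.

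\emph{First equality.} Because $X^{2},0$ is a complete intersection with isolated singularity, $\mathcal{O}_{X,0}$ is Cohen--Macaulay. Since the collection has an isolated special point at $0$, each $\mathcal{C}_{i}$ has the expected dimension one, and the argument preceding Corollary 5.15 applies: $\mathcal{C}_{i}$ is Cohen--Macaulay and the matrix of generators of $JM(X)$ has maximal rank at every point of $\mathcal{C}_{i}$ away from the origin. This forces $H_{0}(X,\mathcal{C}_{i})|_{\mathcal{C}_{i}}$ to be free of the appropriate rank, so $H_{0}(X,\mathcal{C}_{i})\oplus\mathcal{O}_{\mathcal{C}_{i}}^{\,n-d}$ is free as an $\mathcal{O}_{\mathcal{C}_{i}}$-module. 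Each of the four multiplicities of pairs appearing in the previous corollary then collapses to an ordinary Buchsbaum--Rim multiplicity $e(JM(X,\omega^{(j)})|_{\mathcal{C}_{i}},0)$, yielding the first equality.

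\emph{Second equality.} Next I would identify the curves $\mathcal{C}_{i}$ as polar varieties. With $d=2$ and $k_{i}=1$, the set $\mathcal{C}_{1}=C(\omega^{(1)})$ is the codimension-one polar curve $\Gamma^{1}(\omega^{(1)})$, and similarly $\mathcal{C}_{0}=C(\widetilde{\omega^{(2)}})=\Gamma^{1}(\widetilde{\omega^{(2)}})$. Since $\mathcal{O}_{\mathcal{C}_{i}}$ is Cohen--Macaulay, Corollary 2.4 identifies each Buchsbaum--Rim multiplicity with the colength at $0$ of the ideal generated by the maximal minors of the relevant augmented Jacobian matrix, and this colength is precisely the local intersection number at $0$ of $\mathcal{C}_{i}$ with $\Gamma^{1}(\omega^{(j)})$. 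Thus
\[
e(JM(X,\omega^{(2)})|_{\mathcal{C}_{1}},0)=\Gamma^{1}(\omega^{(1)})\cdot\Gamma^{1}(\omega^{(2)}),
\]
and the analogous identifications hold for the other three terms.

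Substituting and observing that the two cross terms $\Gamma^{1}(\omega^{(1)})\cdot\Gamma^{1}(\widetilde{\omega^{(2)}})$ and $\Gamma^{1}(\widetilde{\omega^{(2)}})\cdot\Gamma^{1}(\omega^{(1)})$ are equal and enter with opposite signs, they cancel, leaving exactly $\Gamma^{1}(\omega^{(1)})\cdot\Gamma^{1}(\omega^{(2)})-\Gamma^{1}(\widetilde{\omega^{(1)}})\cdot\Gamma^{1}(\widetilde{\omega^{(2)}})$. The step I expect to be most delicate is verifying that the colength of the ideal of maximal minors of $JM(X,\omega^{(j)})|_{\mathcal{C}_{i}}$ really equals the intersection number of $\mathcal{C}_{i}$ with $\Gamma^{1}(\omega^{(j)})$ at the origin, since the zero locus of these minors on $\mathcal{C}_{i}$ a priori also records the contribution of the singular point $0\in X$; the complete intersection hypothesis, together with the generic transversality of the two polars off $0$ and the Cohen--Macaulay interpretation of Corollary 2.4, is exactly what reconciles these contributions.
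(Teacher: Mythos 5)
Your proposal follows the paper's proof almost step for step: the first equality comes from the freeness of $H_{0}(X,\mathcal{C}_{i})\oplus\mathcal{O}_{\mathcal{C}_{i}}^{n-d}$ over the Cohen--Macaulay curves $\mathcal{C}_{i}$ (so the pair multiplicities degenerate to ordinary Buchsbaum--Rim multiplicities), and the second from computing those multiplicities as colengths of determinantal ideals and cancelling the cross terms $\Gamma^{1}(\omega^{(1)})\cdot\Gamma^{1}(\widetilde{\omega^{(2)}})$. The one place you diverge is precisely the ``delicate step'' you flag at the end, and there your proposed resolution is not the right one. The determinant of the matrix of generators of $JM(X,\omega^{(j)})$ vanishes not only on $\Gamma^{1}(\omega^{(j)})$ but also on $S(X)$, which need not be isolated (in the paper's own example $S(X)$ is the $z$-axis, a curve through the origin), so the colength of that determinant ideal on $\mathcal{C}_{i}$ is
\[
\mathcal{C}_{i}\cdot\Gamma^{1}(\omega^{(j)})+\mathcal{C}_{i}\cdot S(X),
\]
not the bare polar intersection number you assert; no transversality or Cohen--Macaulay argument removes the $S(X)$ contribution term by term. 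What actually reconciles the count is that this extra contribution depends only on the curve $\mathcal{C}_{i}$ and not on which form is being restricted to it, so it enters with opposite signs in the two terms taken over the same curve: $\mathcal{C}_{1}\cdot S(X)$ cancels between the $\omega^{(2)}$ and $\widetilde{\omega^{(2)}}$ terms, and $\mathcal{C}_{0}\cdot S(X)$ between the $\omega^{(1)}$ and $\widetilde{\omega^{(1)}}$ terms, exactly as the cross terms do. Your final formula is unaffected because the omitted quantities cancel anyway, but the intermediate identification $e(JM(X,\omega^{(2)})\vert_{\mathcal{C}_{1}},0)=\Gamma^{1}(\omega^{(1)})\cdot\Gamma^{1}(\omega^{(2)})$ is false in general and should be replaced by the displayed sum; with that correction your argument closes and coincides with the paper's.
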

\begin{proof}  The first equality holds because $X$ is a complete intersection, and the generic point of $\mathcal{C}_i$ lies in the regular part of $X$, the Jacobian module of $X$ has maximal rank off the origin, so
$H_{0}(X,{\mathcal{C}_1}) \oplus \mathcal{O}_{\mathcal{C}_1}^{n-d}$ is just $ \mathcal{O}_{\mathcal{C}_1}^{n}$ so the multiplicity of this pair is just the ordinary Buchsbaum-Rim multiplicity. Since our curves are reduced their rings are Cohen Macaulay, so the multiplicity of $JM(X, \omega^{2})\vert_{\mathcal{C}_1}$ at $0$ is just the colength of the ideal generated by the determinant of the matrix of generators of $JM(X, \omega^{2})\vert_{\mathcal{C}_1}$. This determinant on $X$ defines the union of $S(X)$ and
$\Gamma^1(\omega^2)$ since it does so generically. Thus, this colength is just the intersection of $\Gamma^1(\omega^1)$ with $\Gamma^1(\omega^2)$  and $S(X)$. Applying this insight to each of the terms of the first equality and canceling terms results in the next equality.
\end{proof}
We give an example using this result.

\begin{exa}
Let $(X,0) \subset \mathbb{C}^{3}$ be the germ of a singular surface defined by $f: \mathbb{C}^{3},0 \to \mathbb{C},0$ be $f(x,y,z)= y^{2}-{x}^{3}$.

Take the collection of $1$-forms $\omega = \{\omega^{1}, \omega^{2}\}$ where $\omega^{1}=\{(0,x^{3},z^{2}), (z^{3},0,x^{2})\}$, and $\omega^{2}=\{(y^{2}, z^{3},0), (0,y^{3},z^{2})\}.$  Then the  local Chern obstruction of this collection is $47$. \end{exa}
We will show this using the second equality in the last Corollary. The matrix of generators of $JM(X, \omega^{1})$ and $JM(X, \omega^{2})$
are respectively,
$$\begin{pmatrix} -3x^{2} & 2y & 0 \\ 0 & x^{3} & z^{2}\\ z^{3} & 0 & x^{2}\end{pmatrix},\begin{pmatrix}  -3x^{2} & 2y & 0 \\ y^2 & z^3 & 0\\ 0& y^3& z^2\end{pmatrix}$$

Calculating the determinants of the matrix of generators of $JM(X, \omega^{2})$ and $JM(X, \omega^{1})$, we get $z^2(2y^3+3x^2z^3)$ and $-3x^7+2z^5y$. Since we are only interested in the polar curves of $\omega^i$, we use the defining equation to get the equivalent forms $z^2x^2(2xy+3z^3)$ and $y(-3xy^3+2z^5)$. So the equations of the polar curves of our collection are $ z^2(2xy+3z^3)=0$ and $-3xy^3+2z^5=0$. To calculate the intersection multiplicity, pull back to the normalization using the map $n(t,z)=(t^2,t^3,z)$. So we want the intersection multiplicity of $ z^2(2t^5+3z^3)=0$ and $-3t^{11}+2z^5=0$, which is (2)(11)+25=47. Since our underlying space is Whitney equisingular, the polar curves of $X$ are empty, so the term we have computed is the only term in the corollary, so the local Chern obstruction of this collection is $47$. (Notice that in this example, one component of the polar of $JM(X, \omega^{2})$  is not reduced. Nonetheless, a careful reading of the proof of our main result shows that in this simple case the main result continues to hold.)

We describe briefly how the work of this section can be generalized. Start with an analytic space $X$, and a bundle $E^k$ defined on a Zariski open, everywhere dense  subset $U$ of $X$, $E^k$ a sub-bundle with $k$ dimensional fiber of a bundle, $F^l$, where $F^l$ is defined everywhere. Form the relative Nash transformation $N(X,E,F)$ of $X$ as follows: form the bundle over $X$ of $k$ planes in the fiber of $F$, consider the image of the section of this bundle formed from the fibers of $E^k$, and take its closure. The relative Nash transformation has a  canonical bundle $\xi$ on it  which is a sub-bundle of the pullback of  $F^l$ to  $N(X,E,F)$,  $\xi$ and the pullback of $E$ to $N(X,E,F)$ agree restricted to $U$. By construction and restriction, sections of $F^*$ give sections of $E^*|U$, and $\xi^*$. If a collection of sections of $E^*|U$ arise in this way from a collection of sections of $F^*$, and the collection has an isolated special point at $x\in X$, then we can compute the contribution to the Chern number of the dual from our set of sections (hence to $\xi$) as we did in this section to the dual of the Nash bundle. As in the Nash bundle case, the contribution will be a sum depending on the polar varieties of $E$ relative to $F$ and their intersections. These polar varieties provide some  measure of the geometry of $E$ at its singular points on $X$.


\begin{thebibliography}{99}\addcontentsline{toc}{chapter}{\bibname}





\bibitem{BG} J. Fernandez de Bobadilla and T. Gaffney
\emph{The L\^e numbers of the square of a function and their applications}
J. London Math. Soc. 2008 77: 545-557.


\bibitem{BGR} J.-P. Brasselet, N. G. Grulha Jr. and M. A. S. Ruas, \emph{The Euler obstruction and the Chern obstruction}, Bull. Lond. Math. Soc. 42 (2010), no. 6, 1035-1043.


\bibitem{BLS}J.-P. Brasselet, L\^{e} D. T. and J. Seade, \emph{Euler obstruction
and indices of vector fields}, Topology 39 (2000),  no. 6,
1193-1208.


\bibitem{BMPS}J.-P. Brasselet, D. Massey and A. J. Parameswaran and J. Seade,
    \emph{Euler obstruction and defects of functions on singular
    varieties}, J. London Math. Soc. (2) 70 (2004), no. 1, 59-76.

\bibitem{B-R}
 D. A. Buchsbaum and D. S. Rim,
 \emph{ A generalized Koszul complex. II. Depth and multiplicity,}
 Trans. AMS 111 (1963) 197--224.



\bibitem{EG}
W. Ebeling and S. M. Gusein-Zade, \emph{Indices of vector fields and
$1$-forms on singular varieties}, Global aspects of complex geometry, 129–169, Springer, Berlin, (2006).

\bibitem{EG2}
W. Ebeling and S. M. Gusein-Zade, \emph{Chern obstruction for
collections of $1$-forms on singular varieties}, Singularity theory,
557--564, World Sci. Publ., Hackensack, NJ, (2007).

\bibitem{E-GZ2}W. Ebeling, S. M. Gusein-Zade, {\it On the index of a holomorphic
1-form on an isolated complete intersection singularity}, Doklady
Math. 64 (2001), 221-224.


\bibitem{E-GZ3}W. Ebeling, S. M. Gusein-Zade, {\it Indices of 1-forms on an
isolated complete intersection singularity}, Moscow Math. J. 3,
439-455 (2003).

\bibitem{EGZS}W. Ebeling, S. M. Gusein-Zade, J. Seade, {\it Homological index for
1-forms and a Milnor number for isolated singularities}, Internat.
J. Math. 15 (2004) 895-905.

\bibitem{EG05a}W. Ebeling and S. M. Gusein-Zade, \emph{Radial index
and Euler obstruction of a 1-form on a singular variety}, Geom.
Dedicata 113, 231-241 (2005).

\bibitem{E}  D. Eisenbud, {\it Commutative algebra. With a view toward algebraic geometry.} Graduate Texts in Mathematics, 150. Springer-Verlag, New York, (1995). xvi+785 pp.

 \bibitem{G-2}
   T. Gaffney,
  \emph { Integral closure of modules and Whitney equisingularity,}
   Inventiones, 107 (1992) 301--22.



\bibitem{Gaff1}T. Gaffney, \emph{Polar methods, invariants of pairs of modules and equisingularity}, Real and Complex Singularities (S\~{a}o
Carlos, 2002), Ed. T. Gaffney and M. Ruas, Contemp. Math., 354,
Amer. Math. Soc., Providence, RI, June (2004), 113-136.

\bibitem{Gaff} T. Gaffney, \emph{The Multiplicity polar theorem},
arXiv:math/0703650v1 [math.CV].

\bibitem{Gaff-AG} T. Gaffney, \emph{The Multiplicity polar theorem and isolated singularities}, J. Algebraic Geom. 18 (2009), no. 3, 547-574.

\bibitem{G-7}T.Gaffney, {\it The multiplicity of pairs of modules and
hypersurface singularities}, Real and Complex Singularities (Sao
Carlos, 2004), Trends in Mathematics, Birkha\"user (2006), 143-168.


\bibitem{GK} T. Gaffney and S. Kleiman, \emph{Specialization of integral dependence for modules}, Invent. Math. 137 (1999), 541-574.


\bibitem{grulha1} N. G. Grulha Jr, \emph{The Euler Obstruction and Bruce-Roberts'
Milnor Number}, Quart. J. Math. 60(3) (2009) 291--302.



\bibitem{grulha2} N. G. Grulha Jr, \emph{L'Obstruction d'Euler Locale d'une
Application}, Annales de la Fa\-cult\'{e} des Sciences de Toulouse -
Vol XVII, no 1, p. 53-71, (2008).

\bibitem{STV} J. Seade, M. Tibar and A. Verjovsky, \emph{Milnor
    Numbers and Euler obstruction}, Bull. Braz. Math. Soc. (N.S.) 36 (2005), no. 2, 275-283.


\bibitem{KT} S. Kleiman and A. Thorup, \emph{A geometric theory of the
     Buchsbaum-Rim multiplicity}, J. Algebra 167 (1994), 168-231.

\bibitem{S} J.P. Serre,\emph{ Algebre Locale. Multiplicities.}  Lecture Notes in Mathematics, 11 Springer-Verlag, Berlin-New York (1965).



\end{thebibliography}
 \end{document}